\newtheorem{lemma}{Lemma}[section]
\newtheorem{prop}[lemma]{Proposition}
\newtheorem{cor}[lemma]{Corollary}
\newtheorem{thm}[lemma]{Theorem}
\newtheorem{example}[lemma]{Example}
\newtheorem{thm?}[lemma]{Theorem?}
\newtheorem{remark}[lemma]{Remark}
\begin{document}
\title{Torsion Points and Galois Representations on CM Elliptic Curves}

\author{Abbey Bourdon}
\author{Pete L. Clark}

\newcommand{\etalchar}[1]{$^{#1}$}
\newcommand{\F}{\mathbb{F}}
\newcommand{\et}{\textrm{\'et}}
\newcommand{\ra}{\ensuremath{\rightarrow}}
\newcommand{\lra}{\ensuremath{\longrightarrow}}
\newcommand{\FF}{\F}
\newcommand{\ff}{\mathfrak{f}}
\newcommand{\Z}{\mathbb{Z}}
\newcommand{\N}{\mathcal{N}}
\newcommand{\ch}{}
\newcommand{\R}{\mathbb{R}}
\newcommand{\PP}{\mathbb{P}}
\newcommand{\pp}{\mathfrak{p}}
\newcommand{\C}{\mathbb{C}}
\newcommand{\Q}{\mathbb{Q}}
\newcommand{\ab}{\operatorname{ab}}
\newcommand{\Aut}{\operatorname{Aut}}
\newcommand{\gk}{\mathfrak{g}_K}
\newcommand{\gq}{\mathfrak{g}_{\Q}}
\newcommand{\OQ}{\overline{\Q}}
\newcommand{\Out}{\operatorname{Out}}
\newcommand{\End}{\operatorname{End}}
\newcommand{\Gal}{\operatorname{Gal}}
\newcommand{\CT}{(\mathcal{C},\mathcal{T})}
\newcommand{\lcm}{\operatorname{lcm}}
\newcommand{\Div}{\operatorname{Div}}
\newcommand{\OO}{\mathcal{O}}
\newcommand{\rank}{\operatorname{rank}}
\newcommand{\tors}{\operatorname{tors}}
\newcommand{\IM}{\operatorname{IM}}
\newcommand{\CM}{\mathbf{CM}}
\newcommand{\HS}{\mathbf{HS}}
\newcommand{\Frac}{\operatorname{Frac}}
\newcommand{\Pic}{\operatorname{Pic}}
\newcommand{\coker}{\operatorname{coker}}
\newcommand{\Cl}{\operatorname{Cl}}
\newcommand{\loc}{\operatorname{loc}}
\newcommand{\GL}{\operatorname{GL}}
\newcommand{\PGL}{\operatorname{PGL}}
\newcommand{\PSL}{\operatorname{PSL}}
\newcommand{\Frob}{\operatorname{Frob}}
\newcommand{\Hom}{\operatorname{Hom}}
\newcommand{\Coker}{\operatorname{\coker}}
\newcommand{\Ker}{\ker}
\newcommand{\g}{\mathfrak{g}}
\newcommand{\sep}{\operatorname{sep}}
\newcommand{\new}{\operatorname{new}}
\newcommand{\Ok}{\mathcal{O}_K}
\newcommand{\ord}{\operatorname{ord}}
\newcommand{\mm}{\mathfrak{m}}
\newcommand{\Ohell}{\OO_{\ell^{\infty}}}
\newcommand{\cc}{\mathfrak{c}}
\newcommand{\ann}{\operatorname{ann}}
\renewcommand{\tt}{\mathfrak{t}}
\renewcommand{\cc}{\mathfrak{a}}
\renewcommand{\aa}{\mathfrak{a}}
\newcommand\leg{\genfrac(){.4pt}{}}
\renewcommand{\gg}{\mathfrak{g}}
\renewcommand{\O}{\mathcal{O}}
\newcommand{\Spec}{\operatorname{Spec}}
\newcommand{\rr}{\mathfrak{r}}
\newcommand{\rad}{\operatorname{rad}}
\newcommand{\SL}{\operatorname{SL}}
\def\hh{\mathfrak{h}}

\begin{abstract}
{We prove several results on torsion points and Galois representations for complex multiplication (CM) elliptic curves over a number field containing the CM field. One result computes the degree in which such an elliptic curve has a rational point of order $N$, refining results of Silverberg  \cite{Silverberg88}, \cite{Silverberg92}. Another result bounds the size of the torsion subgroup of an elliptic curve with CM by a nonmaximal order in terms of the torsion subgroup of an elliptic curve with CM by the maximal order. Our techniques also yield a complete classification of both the possible torsion subgroups and the rational cyclic isogenies of a $K$-CM elliptic curve $E$ defined over $K(j(E))$. }
\end{abstract}

\maketitle

\tableofcontents

\section{Introduction}
\noindent
 Let $F$ be a field of characteristic $0$, and let $E_{/F}$ be an elliptic curve.  We say $E$ has \textbf{complex multiplication (CM)} 
if the endomorphism algebra 
\[ \End^0 E = \End (E_{/\overline{F}}) \otimes_{\Z} \Q \]
is strictly larger than $\Q$, in which case it is necessarily an imaginary quadratic field $K$ and $\OO \coloneqq \End (E_{/\overline{F}})$ is a $\Z$-order in $K$.  \\ \indent
{The general theory of complex multiplication has a long and rich history, with important contributions made by Kronecker, Weber, Fricke, Hasse, Deuring, and Shimura. For a summary of these foundational results, see \cite[Chapter 2]{SilvermanII}. More recent contributions to the study of torsion points and Galois representations on CM elliptic curves defined over number fields} have been made by Olson \cite{Olson74}, Silverberg 
\cite{Silverberg88}, \cite{Silverberg92}, Parish \cite{Parish89}, Aoki \cite{Aoki95}, \cite{Aoki06}, Ross \cite{Ross94}, Kwon \cite{Kwon99}, 
Prasad-Yogananda \cite{PY01}, Stevenhagen \cite{Stevenhagen01}, Breuer \cite{Breuer10}, Lombardo \cite{Lombardo15},  Lozano-Robledo \cite{LR}, Gaudron-R\'emond \cite{Gaudron-Remond18} and the present authors and our collaborators \cite{TORS1}, \cite{TORS2}, \cite{BCS}, \cite{CP15}, \cite{BCP}, \cite{BP16}.  In {this paper}, we consider the case of a CM elliptic curve 
defined over a number field that contains the CM field.  The case in which the ground field is a number field not assumed to contain 
the CM field is pursued in a followup paper \cite{BCII}.   There is related work of \'A. Lozano-Robledo done concurrently with 
the present work \cite{LR19}, which determines all possible images of the $\ell$-adic Galois representations of a CM elliptic 
curve $E$ over $\Q(j(E))$ up to conjugacy.  
\\ \\
Throughout this introduction we maintain the following notation: $K$ is an imaginary quadratic field, $\OO$ is an order in $K$, $\ff$ is the conductor of $\OO$, $K(\ff)$ is the $\ff$-ring class field of $K$ (i.e., $K(\ff) = K(j(E))$ for any $\OO$-CM elliptic curve $E$), $F$ 
is a number field containing $K$ and $N$ is a positive integer.

\subsection{The Torsion Degree Theorem} Let $\OO$ be an order in the imaginary quadratic field $K$, and let $N \in \Z^+$.  The following result was first proven by Silverberg \cite{Silverberg88}, \cite{Silverberg92} and then subsequently by Prasad-Yogananda \cite{PY01}.  

\begin{thm}[Silverberg]
\label{SILVERBERGTHM}
Let $F \supset K$ be a number field, and suppose that there is an $\OO$-CM elliptic curve $E_{/F}$ with an $F$-rational 
point of order $N$.  Then 
\[ \varphi(N) \leq \# \OO^{\times} \cdot [F:K]. \]
\end{thm}
\noindent
Theorem \ref{SILVERBERGTHM} is a crucial result in the study of torsion subgroups of CM elliptic curves over general 
number fields.  For instance, it was the main tool in the complete 
enumeration of torsion subgroups of CM elliptic curves defined over number fields of small degree \cite{TORS1}, \cite{TORS2}.  
\\ \indent
The hypotheses of Theorem \ref{SILVERBERGTHM} force $F \supset  K(\ff) = K(j(E))$.  Thus it is natural to define $T(\OO,N)$ 
to be the least degree $[F:K(\ff)$] of a number field $F \supset K$ over which some $\OO$-CM elliptic curve admits an $F$-rational point of order $N$.  We show in Theorem \ref{SPY} that 
\begin{equation}
\label{SPYINTRO}
 \varphi(N) \mid \# \OO^{\times} \cdot T(\OO,N),
\end{equation}
i.e., Theorem \ref{SILVERBERGTHM} holds as a divisibility. \\ \indent  Our first main result 
computes $T(\OO,N)$ in all cases and gives the analogous divisibility refinement.  

\begin{thm}
\label{BIGONE}
\label{BIGONE1}
Let $\OO$ be an order in the imaginary quadratic field $K$, and let $N$ be a positive integer.  There is an integer $T(\OO,N)$, explicitly 
computed in \S 7, such that: \\
(i) if $F \supset K$ is a number field and $E_{/F}$ is an $\OO$-CM elliptic curve with an $F$-rational point of order $N$, then $T(\OO,N) \mid [F:K(\ff)]$, and \\
(ii) there is a number field $F \supset K$ and an $\OO$-CM elliptic curve $E_{/F}$ such that $[F:K(\ff)] = T(\OO,N)$ and 
$E(F)$ has a point of order $N$.
\end{thm}
\noindent 
Equivalently, Theorem \ref{BIGONE} determines the least degree of a closed $\OO$-CM point on $X_1(N)_{/K}$ and shows that this degree divides the degree of all closed $\OO$-CM points.

\subsection{The Isogeny Torsion Theorem} A key feature of the present work is that we work with \emph{all} imaginary quadratic 
orders $\OO$, not just the maximal order $\OO_K$.  Working with nonmaximal orders entails certain technical 
complications.  For instance, if $F \supset K$ is a number field, then $E(F)[\tors]$ is a finite $\OO$-submodule of $E(\C)$.  
As we will see in \S 2.2, every finite $\OO$-submodule of $E(\C)$ is cyclic if and only if the order $\OO$ is maximal.  
\\ \indent
The phenomenon of ``ascending isogenies'' can sometimes be used to study $\OO$-CM elliptic curves 
in terms of $\OO_K$-CM elliptic curves, and this happens twice in the present paper.  
{Specifically,} let $E$ be an $\OO$-CM elliptic curve defined over a number field $F$, and let 
$\ff'$ be  a positive integer that divides $\ff$.  Then by \cite[Prop. 2.2]{BP16} there is an elliptic curve $(E_{\ff'})_{/F}$ 
such that $\OO(\ff') \coloneqq \End E_{\ff'}$ is the order of conductor $\ff'$ in $K$ and an $F$-rational isogeny $\iota_{\ff'} \colon E \ra E_{\ff'}$ that is cyclic of degree $\frac{\ff}{\ff'}$.  There is an embedding $F \hookrightarrow \mathbb{C}$ such that the base change of $\iota_{\ff'}$ to $\C$ is the natural map $\mathbb{C}/\OO \rightarrow \mathbb{C}/\OO(\ff')$ of complex elliptic curves.  The map $\iota_{\ff'}$ is universal for maps from an $\OO$-CM elliptic curve 
to an $\OO(\ff')$-CM elliptic curve \cite[\S 2.6]{BCII} and is thus unique, up to isomorphism on the target.  {Here is the first result making use of this canonical isogeny.}

\begin{thm}(Isogeny Torsion Theorem)
\label{TIT}
\label{ITT}
Let $\OO$ be an order in an imaginary quadratic field $K$, of conductor $\ff$, and let $\ff'$ be a positive integer dividing $\ff$.  
Let $F \supset K$ be a number field, and let $E_{/F}$ be an $\OO$-CM elliptic curve.  Let $\iota_{\ff'} \colon E \ra E_{\ff'}$ be 
the $F$-rational isogeny to an elliptic curve $E_{\ff'}$ with CM by the order in $K$ of conductor $\ff'$, as described above.  
Then we have
\[ \# E(F)[\tors] \mid \# E_{\ff'}(F)[\tors]. \]
\end{thm}
\noindent
In particular, taking $\ff'=1$, we see that $\# E(F)[\tors]$ is bounded by  $\# E_{1}(F)[\tors]$, where $(E_1)_{/F}$ is an $\OO_K$-CM elliptic curve.   We give examples where the exponent of $E_{\ff'}(F)[\tors]$ is strictly smaller than that of $E(F)[\tors]$, showing in general we cannot view $E(F)[\tors]$ as a subgroup of $E_{\ff'}(F)[\tors]$, and we prove that $\frac{\#E_{\ff'}(F)[\tors]}{\#E(F)[\tors]}$ can be arbitrarily large (see Propositions \ref{ABBEYPROP3} and \ref{ABBEYPROP2}). Moreover, the statement is false if we do not require $F \supset K$.  Theorem \ref{ITT} has applications to determining fields of moduli of partial level $N$ structures ($\S6.2$, $\S6.3$).

\subsection{The Reduced Galois Representation} There is a well-known interplay between points on modular curves over number fields and Galois representations of elliptic curves.  The proofs of Theorems \ref{BIGONE} and \ref{ITT} make use of Galois representations, 
and in the former case we build on a nearly complete description of the image of the mod $N$ Galois representation on an 
$\OO$-CM elliptic $E_{/K(\ff)}$.  
\\ \indent
For an elliptic curve $E$ defined over a number field $F$ and a positive integer $N$, 
the $\Z$-linear action of $\gg_F \coloneqq \Aut(\overline{F}/F)$ on $E[N]$ gives rise to the mod $N$ Galois representation:
\[ \rho_N \colon \gg_F \ra \GL_2(\Z/N\Z). \]
When $E$ does not have CM, a celebrated result of Serre \cite{Serre72} asserts that as $N$ varies over all positive integers, the index 
$[\GL_2(\Z/N\Z):\rho_N(\gg_F)]$ remains bounded.  This is certainly not the case when $E$ has CM: as usual, here we consider 
the case in which $F$ is a number field containing the CM field $K$.  Then for $N \in \Z^+$, Galois acts by $\OO$-linear endomorphisms of $E[N]$, {which is a free $\OO/N\OO$-module of rank one. Thus} the mod $N$ Galois representation 
takes the form
\[ \rho_N \colon \gg_F \ra (\OO/N\OO)^{\times} \hookrightarrow \GL_2(\Z/N\Z).\]
In the CM case the analogue of Serre's result is the boundedness of the index of $\rho_N$ in $(\OO/N\OO)^{\times}$ as $N$ varies.  In fact more is true: a slight variant of $\rho_N$ is surjective for all $\OO$ and $N$.  To motivate this, observe that fixing $\OO$ is the same as
 fixing $j(E)$ (up to Galois conjugacy), but fixing $j(E)$ does not determine the $K(\ff)$-rational model of $E$ and thus not 
$\rho_N$.  One gets from one model to another via a twist by $d \in K(\ff)^{\times}/K(\ff)^{\times \# \OO^{\times}}$.  If $E$ and $E^d$ are elliptic curves over $K(\ff)$ and $\rho_{E}, \rho_{E^d} \colon \gg_{K(\ff)} \ra (\OO/N\OO)^{\times}$ are their
mod $N$ Galois representations, then $\rho_{E^d} = \rho_E \otimes \chi_d$, where $\chi_d \colon \gg_{K(\ff)} \ra \OO^{\times}$ is the 
character corresponding to $d$.  Thus we define the \textbf{reduced mod N Cartan subgroup} 

\[
\overline{C_N(\OO)}=C_N(\OO)/q_N(\OO^{\times})
\]
to be the quotient of $C_N(\OO) = (\OO/N\OO)^{\times}$ by the image of $\OO^{\times}$ under the natural map $q_N \colon \OO \ra \OO/N\OO$ and the \textbf{reduced mod N Galois representation} to be the composite homomorphism

 \[ \overline{\rho_N}: \gg_{F} \stackrel{\rho_{N}}{\longrightarrow} C_N(\OO) \ra \overline{C_N(\OO)}.\]
{The key feature of $\overline{\rho_N}$ is that it} is independent of the $K(\ff)$-rational model.   \\ \indent
For an elliptic curve $E$ defined over a field $F$ of characteristic $0$, there is an $F$-rational isomorphism $\iota \colon E/\Aut(E) \stackrel{\sim}{\lra} \PP^1$, 
and a \textbf{Weber function} on $E$ is any function $\mathfrak{h} \colon E \ra \PP^1$ obtained by composing the quotient map 
with such an isomorphism $\iota$.  Then the field extension cut out by the reduced Galois representation is the field obtained by adjoining to $K(\ff)$ the values of the Weber function on the $N$-torsion points of $E$:
\[  \overline{\Q}^{\Ker \overline{\rho_N}} = K(\ff)(\mathfrak{h}(E[N])). \]
When $\OO = \OO_K$ the First Main Theorem of Complex Multiplication tells us that for any ideal $I$ of $\OO_K$ we have that
$K(\ff)(\mathfrak{h}(E[I]))$ is $K^{(I)}$, the $I$-ray class field of $K$.   It turns out that \[[K^{I}:K^{(1)}] = \# C_N(\OO) \]
and thus $\overline{\rho_N}$ is surjective.  The case of an arbitrary order is much less classical but still known: in \cite{Stevenhagen01}, {Stevenhagen} used Shimura's reciprocity law to show that for all $N \in \Z^+$, the Weber function field $K(\ff)(\mathfrak{h}(E[N]))$ is $K(\ff)^{N \OO_K}$, the \textbf{N-ray class field of $\OO$}: this is the finite abelian extension of $K$ corresponding to the image of the subgroup $\C^{\times} \times \{x \in \widehat{\OO}^{\times} \mid x \equiv 1 \pmod{N}\}$ in the norm one id\`ele class group of $K$.  Moreover, it follows from the adelic description \cite[p. 8]{Stevenhagen01} that 
\[ \Aut(K(\ff)^{N \OO_K}/K(\ff))= \overline{C_N(\OO)} \]
Thus we have the following result.

\begin{thm}[Stevenhagen]
\label{STEVENHAGEN}
\label{MAINTHM}
Let $\OO$ be an order in the imaginary quadratic field $K$, and let $N \in \Z^+$.  Then the reduced mod $N$ Galois representation 
\[ \overline{\rho_N}: \gg_{K(\ff)} \ra \overline{C_N(\OO)} \]
is surjective and $\overline{\Q}^{\Ker \overline{\rho_N}} = K(\ff)^{N \OO_K}$,
the $N$-ray class field of $\OO$.  
\end{thm}
\noindent
We will give a new proof of Theorem \ref{STEVENHAGEN}, as follows.  Let $E_{/K(\ff)}$ be a $\OO$-CM elliptic curve.  Using the canonical isogeny $\iota_1 \colon E \ra E_1$ to an $\OO_K$-CM elliptic curve, we show that the torsion field $K(\ff)(E[N])$ contains the ray class field $K^{N \OO_K}$: Theorem \ref{BCS3.16THM}b).  Using an observation of Parish we show that $K(\ff)(E[N])$ contains the ring class 
field $K(N\ff)$: Theorem \ref{PARISHTHM}.  By Theorem \ref{WFP}c)), {we have}
\[ K(\ff)(\mathfrak{h}(E[N])) \supset K^{N \OO_K}K(N\ff) = K(\ff)^{N \OO_K}, \]
{where the last equality can be shown using class field theory ($\S 5.1$). Since}
\[  [K(\ff)(\mathfrak{h}(E[N])):K(\ff)] \leq \# \overline{C_N(\OO)} = [K(\ff)^{N \OO_K}:K(\ff)], \]
we get 
\[ \overline{\Q}^{\Ker \overline{\rho_N}} = K(\ff)(\mathfrak{h}(E[N])) = K(\ff)^{N \OO_K}. \]
\noindent
Theorem \ref{STEVENHAGEN} has the following useful consequences:

\begin{cor}
\label{NEWCOR3} 
\label{1.1B}
\label{COR1.4}
For all number fields $F \supset K$ and all $\OO$-CM elliptic curves $E_{/F}$ we have \[
[C_N(\OO):\rho_N(\gg_F)] \mid \#\OO^{\times} [F:K(\ff)] \leq 6[F:K(\ff)].\]
\end{cor}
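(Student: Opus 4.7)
The plan is to deduce this from Corollary \ref{COR1.3} by a short index-chasing argument. First observe that since $K \subset F$ and $j(E) \in F$, we have $K(j(E)) \subset F$, hence $\gg_F$ is an open subgroup of $\gg_{K(j(E))}$ of index $[F:K(j(E))]$. The mod $N$ Galois representation $\rho_N$ is defined on $\gg_K$, so we can simultaneously restrict both representations $\rho_N$ and $\overline{\rho_N}$ to $\gg_F$ and compare indices in $C_N(\OO)$ and $\overline{C_N(\OO)}$.

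By Corollary \ref{COR1.3}, $\overline{\rho_N}: \gg_{K(j(E))} \to \overline{C_N(\OO)}$ is surjective. Restricting to the open subgroup $\gg_F$, the image $\overline{\rho_N}(\gg_F)$ has index in $\overline{C_N(\OO)}$ dividing the subgroup index, namely
\[ [\overline{C_N(\OO)}:\overline{\rho_N}(\gg_F)] \mid [\gg_{K(j(E))}:\gg_F] = [F:K(j(E))]. \]

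Next I would lift this bound from the reduced Cartan to $C_N(\OO)$ using the defining exact sequence
\[ 1 \to q_N(\OO^{\times}) \to C_N(\OO) \to \overline{C_N(\OO)} \to 1. \]
Writing $H = \rho_N(\gg_F) \subset C_N(\OO)$ and factoring the index through $H \cdot q_N(\OO^{\times})$, one gets
\[ [C_N(\OO):H] = [\overline{C_N(\OO)}:\overline{\rho_N}(\gg_F)] \cdot [q_N(\OO^{\times}):q_N(\OO^{\times}) \cap H], \]
so the left-hand side divides $[F:K(j(E))] \cdot \#q_N(\OO^{\times})$, which in turn divides $[F:K(j(E))] \cdot \#\OO^{\times}$.

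Finally, the bound $\#\OO^{\times} \le 6$ is a standard fact about unit groups of orders in imaginary quadratic fields (the value $6$ being attained only by the maximal order in $\Q(\sqrt{-3})$), and this yields the stated inequality. The only step that requires any thought is the lifting from reduced to unreduced Cartan; everything else is formal restriction of images under an open-subgroup inclusion. In that sense there is no real obstacle: all the content has been packaged into Theorem \ref{MAINTHM} and Corollary \ref{COR1.3}.
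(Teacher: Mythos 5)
Your proof is correct and matches the paper's in all essentials: both invoke Corollary \ref{COR1.3} to bound the index of the reduced representation by $[F:K(j(E))]$, then lift to $C_N(\OO)$ via the short exact sequence with kernel $q_N(\OO^{\times})$, whose order divides $\#\OO^{\times} \le 6$. The paper compresses the lifting step into the single word ``thus,'' so your explicit index computation $[C_N(\OO):H] = [\overline{C_N(\OO)}:\pi(H)]\cdot[q_N(\OO^{\times}):q_N(\OO^{\times})\cap H]$ is a welcome expansion of what the paper leaves implicit.

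One imprecision worth flagging: you write that ``$\rho_N$ is defined on $\gg_K$.'' This is not so --- $E$ is only defined over $F$, so $\rho_N$ lives on $\gg_F$ and does not generally descend. What \emph{does} descend is the \emph{reduced} representation $\overline{\rho_N}$, which depends only on $j(E)$ and hence is well-defined on $\gg_{K(j(E))}$ (the paper states this model-independence explicitly at the start of its proof). Your subsequent argument in fact uses only the surjectivity of $\overline{\rho_N}$ on $\gg_{K(j(E))}$ and its restriction to $\gg_F$, so the misstatement does not propagate, but the justification for why $\overline{\rho_N}(\gg_F)$ can be compared against the surjective representation from Corollary \ref{COR1.3} is precisely this model-independence and should be invoked by name.
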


\begin{remark}
Corollary \ref{NEWCOR3} strengthens a result of D. Lombardo \cite[Thm. 1.5]{Lombardo15}, who showed that $[C_N(\OO):\rho_N(\gg_F)] \leq 6[F:K]$.

\end{remark}

\begin{cor}
\label{NEWCOR1}
\label{1.1C}
\label{COR1.2}
Let $N \in \Z^+$.  There is a number field $F \supset K$ 
and an $\OO$-CM elliptic curve $E_{/F}$ such that $E[N] = E[N](F)$ and $[F:K(j(E))] = \# \overline{C_N(\OO)}$.
\end{cor}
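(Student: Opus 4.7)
The plan is to start with an arbitrary $\OO$-CM elliptic curve $E_0$ over $K(j(E))$, set $F_0 := K(j(E))(\mathfrak{h}(E_0[N]))$, and then twist $E_0$ over $F_0$ to a model $E$ whose full $N$-torsion is $F_0$-rational. Theorem~\ref{MAINTHM} gives $[F_0:K(j(E))]=\#\overline{C_N(\OO)}$, so if such a twist exists, taking $F:=F_0$ proves the corollary.

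By Corollary~\ref{COR1.3}, $F_0$ is the fixed field of the reduced representation $\overline{\rho_N^{E_0}}$, hence $\rho_N^{E_0}(\gg_{F_0}) \subseteq q_N(\OO^\times)$. Since $\OO^\times \subseteq K \subseteq F_0$ has trivial $\gg_{F_0}$-action, twists of $E_0$ over $F_0$ are classified by $\Hom(\gg_{F_0},\OO^\times)$, and twisting by a character $\chi$ replaces the mod $N$ representation with $g \mapsto q_N(\chi(g)) \cdot \rho_N^{E_0}(g)$. The problem thus reduces to lifting $(\rho_N^{E_0}|_{\gg_{F_0}})^{-1}: \gg_{F_0} \to q_N(\OO^\times)$ through $q_N|_{\OO^\times}$.

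For $N \geq 3$, $q_N|_{\OO^\times}$ is injective (as recalled in the introduction), hence restricts to a group isomorphism $\OO^\times \xrightarrow{\sim} q_N(\OO^\times)$. Composing its inverse with $(\rho_N^{E_0}|_{\gg_{F_0}})^{-1}$ produces the desired $\chi$, and the twist $E:=E_0^\chi$ satisfies $E[N]=E[N](F)$ with $F := F_0$.

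The main obstacle is the case $N = 2$. If $\OO^\times = \{\pm 1\}$, then $q_2(\OO^\times)=\{1\}$, so $\rho_2^{E_0}|_{\gg_{F_0}}$ is automatically trivial and no twist is needed. The remaining possibility is $\OO \in \{\Z[i], \Z[\omega]\}$: here the short exact sequence $1 \to \{\pm 1\} \to \OO^\times \to q_2(\OO^\times) \to 1$ may fail to split, so the lifting argument can break down. However, a direct computation shows $q_2(\OO^\times) = C_2(\OO)$ in both cases, giving $\#\overline{C_2(\OO)}=1$ and $F_0 = K(j(E)) = K$. It then suffices to exhibit explicit $\OO$-CM models with all $2$-torsion rational over $K$, such as $y^2 = x^3 - x$ for $\OO = \Z[i]$ and $y^2 = x^3 - 1$ for $\OO = \Z[\omega]$.
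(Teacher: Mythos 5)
Your proof is correct and follows essentially the same strategy as the paper's: pass to the Weber function field $F_0 = K(j(E))(\hh(E_0[N]))$, whose degree over $K(j(E))$ is $\#\overline{C_N(\OO)}$ by Theorem~\ref{MAINTHM}, observe that $\rho_N^{E_0}(\gg_{F_0}) \subseteq q_N(\OO^\times)$, and twist over $F_0$ to kill the residual image. Where the paper simply asserts that ``a suitable twist has trivial mod $N$ Galois representation,'' you make the step precise by constructing the twisting character as the lift of $\sigma \mapsto \rho_N^{E_0}(\sigma)^{-1}$ through $q_N|_{\OO^\times}$, which is immediate for $N \geq 3$ because $q_N|_{\OO^\times}$ is then an isomorphism onto $q_N(\OO^\times)$, and you correctly isolate $N = 2$ with $\OO \in \{\Z[i],\Z[\omega]\}$ as the only cases requiring separate attention; the explicit models $y^2=x^3-x$ and $y^2=x^3-1$ together with your verification that $\#\overline{C_2(\OO)}=1$ finish those. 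A minor remark: the extra caution there is sound but not strictly necessary, since for $\Z[\omega]$ the sequence $1 \to \{\pm 1\} \to \OO^\times \to q_2(\OO^\times) \to 1$ actually splits (the cube roots of unity map isomorphically onto $(\OO/2\OO)^\times \cong \F_4^\times$), and for $\Z[i]$ the relevant character still lifts because $\mu_4 \subset \Q(i)$, so Kummer theory lifts any quadratic character of $\gg_{\Q(i)}$ to a quartic one; the explicit curves are nevertheless the cleanest route.
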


\begin{cor}
\label{LargeTwistCor}
For all $N \in \Z^+$, there is an $\OO$-CM elliptic curve $E_{/K(\ff)}$ such that $\rho_{E,N}(\gg_{K(\ff)}) = C_N(\OO)$.
\end{cor}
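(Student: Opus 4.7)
The plan is to begin with an arbitrary $\OO$-CM elliptic curve $E_0/K(\ff)$ (which exists since $K(j(E_0))=K(\ff)$) and modify it by a carefully chosen twist so that the mod-$N$ image fills out the full Cartan subgroup $C_N(\OO)$. By Corollary \ref{NEWCOR2}, the image $H_0 \coloneqq \rho_{E_0,N}(\gg_{K(\ff)})$ already surjects onto $\overline{C_N(\OO)}$, equivalently $H_0 \cdot V = C_N(\OO)$ where $V \coloneqq q_N(\OO^\times)$. The only possible defect is that $H_0 \cap V$ may be proper in $V$, and twisting is the natural tool to close this gap.

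Twists of $E_0$ over $K(\ff)$ are classified by $H^1(\gg_{K(\ff)}, \Aut E_0) = H^1(\gg_{K(\ff)}, \OO^\times)$. Since $\OO^\times = \mu_n$ for $n \in \{2,4,6\}$ and $\mu_n \subset K \subset K(\ff)$, the Galois action is trivial, and this $H^1$ identifies with $\Hom(\gg_{K(\ff)}, \mu_n) \cong K(\ff)^\times / K(\ff)^{\times n}$ by Kummer theory. For $\alpha \in K(\ff)^\times$ with associated Kummer character $\chi_\alpha(\sigma) = \sigma(\sqrt[n]{\alpha})/\sqrt[n]{\alpha}$, standard cocycle bookkeeping shows the mod-$N$ representation of the twist $E_\chi$ is given by
\[ \rho_{E_\chi, N}(\sigma) = q_N(\chi_\alpha(\sigma)) \cdot \rho_{E_0, N}(\sigma). \]

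Let $L \subset \overline{\Q}$ be the fixed field of $\Ker(\rho_{E_0, N})$, a finite extension of $K(\ff)$. Restricting to $\gg_L$ annihilates $\rho_{E_0,N}$, yielding $\rho_{E_\chi, N}|_{\gg_L} = q_N \circ \chi_\alpha|_{\gg_L}$. If $\chi_\alpha|_{\gg_L}\colon \gg_L \to \mu_n$ is \emph{surjective}, then $\rho_{E_\chi, N}(\gg_L) = V$, so $V \subseteq \rho_{E_\chi, N}(\gg_{K(\ff)})$; combined with Corollary \ref{NEWCOR2} applied to $E_\chi$ (applicable since $j(E_\chi) = j(E_0)$), this forces $\rho_{E_\chi, N}(\gg_{K(\ff)}) = C_N(\OO)$, completing the argument.

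The remaining task is to exhibit $\alpha \in K(\ff)^\times$ whose Kummer character surjects onto $\mu_n$ on $\gg_L$, equivalently (by Kummer theory over $L$, which contains $\mu_n$) so that $x^n - \alpha$ is irreducible over $L$. This holds provided $\alpha \notin L^{\times p}$ for every prime $p \mid n$ and, when $4 \mid n$, also $\alpha \notin -4L^{\times 4}$. Choosing $\alpha$ to be a uniformizer for any prime $\mathfrak{p}$ of $K(\ff)$ that is coprime to $2n$ and unramified in $L$ works: any prime $\mathfrak{P}$ of $L$ above $\mathfrak{p}$ satisfies $v_\mathfrak{P}(\alpha) = 1$, which is prime to $n$ and rules out the exceptional case. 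Such primes $\mathfrak{p}$ exist in abundance since only finitely many primes of $K(\ff)$ ramify in $L$ or divide $2n$. The main obstacle is essentially this existence step, which is dispatched cleanly by these ramification considerations.
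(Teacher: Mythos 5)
Your proof is correct and relies on the same construction as the paper's: twist $E_0$ by a Kummer character chosen to remain surjective upon restriction to $\gg_{K(\ff)(E_0[N])}$, the character being built (via weak approximation) from an element of valuation one at a prime unramified in the torsion field. The paper packages this twist inside Theorem \ref{WFP}b) and then counts degrees through $W(N,\OO)$ using Theorem \ref{MAINTHM}, whereas you argue directly on the Galois image via Corollary \ref{NEWCOR2}; the underlying mechanism is identical.
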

\noindent
Using Theorems \ref{BIGONE} and \ref{STEVENHAGEN} we also obtain a complete classification of the set of $N \in \Z^+$ such that an $\OO$-CM elliptic curve $E_{/K(\ff)}$ admits a $K(\ff)$-rational cyclic $N$-isogeny and of the possible torsion subgroups of 
an $\OO$-CM elliptic curve $E_{/K(\ff)}$. See $\S 6.6$ and $\S 6.7$.

\subsection{Acknowledgments} We are deeply indebted to R. Broker for making us aware of Stevenhagen's work \cite{Stevenhagen01}. We also thank the anonymous referee for helpful comments and suggestions.

\section{Preliminaries}

\subsection{Foundations} 
We begin by setting some terminology for orders in imaginary quadratic fields.  Let $K$ be an imaginary quadratic field, with ring of integers $\OO_K$, and let $w_K \coloneqq \# \OO_K^{\times}$ be the number of roots of unity in $K$.   Let $\OO$ be a $\Z$-order in $K$.  Let $\ff = [\OO_K:\OO]$ be the conductor of $\OO$, and let $\Delta$ be the discriminant of $\OO$. Then 
\[ \OO = \Z + \ff \OO_K, \ \Delta = \ff^2 \Delta_K. \]
For fixed $K$ and $\ff \in \Z^+$ there is a unique order $\OO(\ff)$ in $K$ of conductor $\ff$. Thus an imaginary quadratic order is determined by its discriminant $\Delta$, a negative integer which is $0$ or $1$ modulo $4$.  Conversely, for any negative integer $\Delta$ which is $0$ or $1$ modulo $4$, we put  \[\tau_{\Delta} = \frac{\Delta + \sqrt{\Delta}}{2}, \]
and then $\Z[\tau_{\Delta}]$ is an order in $K = \Q(\sqrt{\Delta})$ of discriminant $\Delta$.
\\ \\
Throughout this paper we will use the following terminological convention: by ``an order $\OO$'' we always mean a $\Z$-order 
$\OO$ in an imaginary quadratic field, which is determined as the fraction field of $\OO$ and denoted by $K$.  We may specify an order $\OO$ 
by giving its discriminant, which also determines $K$.  If $K$ is already given, then we specify an order $\OO$ in $K$ by giving 
the conductor $\ff$.
\\ \\
For any $\OO$-CM elliptic curve $E$ we have $K(j(E))=K(\ff)$, the ring class field of $K$ of conductor $\ff$ (\cite[Thm. 11.1]{Cox89}) Thus $[K(j(E)):K]$ is determined via the following formula.
\begin{thm}
\label{COX7.28THM}
For $N \in \Z^+$, let $K(N)$ denote the $N$-\textbf{ring class field} of $K$.  Then $K(1) = K^{(1)}$ is the Hilbert class 
field of $K$, and for all $N \geq 2$ we have 
\begin{equation*}
[K(N):K^{(1)}] = \frac{2}{w_K} N \prod_{p \mid N} \left(1 - \left(\frac{\Delta_K}{p} \right) \frac{1}{p} \right). 
\end{equation*}
\end{thm}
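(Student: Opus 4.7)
The plan is to reduce the degree computation to the classical class number formula for orders in imaginary quadratic fields, then extract the local factors from the splitting behavior of primes in $\OO_K$.

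First, recall that the $N$-ring class field $K(N)$ is by definition the abelian extension of $K$ whose Galois group is identified via class field theory with $\Pic(\OO(N))$, where $\OO(N) = \Z + N\OO_K$ is the unique order of conductor $N$. The Hilbert class field $K^{(1)}$ corresponds to $\OO(1) = \OO_K$ and so has $\Gal(K^{(1)}/K) \cong \Pic(\OO_K) = \Cl(\OO_K)$ of order $h_K$. The case $N = 1$ is immediate from these definitions. For $N \geq 2$, since $K(N) \supset K^{(1)}$, we have
\[ [K(N):K^{(1)}] = \frac{h(\OO(N))}{h_K}, \]
so the problem reduces to proving the standard class number formula
\[ h(\OO(N)) = \frac{h_K \cdot N}{[\OO_K^\times:\OO(N)^\times]} \prod_{p\mid N}\left(1 - \left(\frac{\Delta_K}{p}\right)\frac{1}{p}\right). \]

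To establish this formula, I would use the well-known exact sequence comparing Picard groups of $\OO(N)$ and $\OO_K$:
\[ 1 \to \OO_K^\times/\OO(N)^\times \to (\OO_K/N\OO_K)^\times/(\Z/N\Z)^\times \to \Pic(\OO(N)) \to \Pic(\OO_K) \to 1. \]
(This arises from the conductor exact sequence relating invertible ideals of $\OO(N)$ coprime to $N$ to those of $\OO_K$, together with the description of principal ideals; see Cox \cite[Prop. 7.22]{Cox89}.) Since $\OO(N)^\times = \{\pm 1\}$ for $N \geq 2$ (any nontrivial root of unity in $\OO_K$ is outside $\Z + N\OO_K$ when $N\geq 2$), the index $[\OO_K^\times:\OO(N)^\times]$ equals $w_K/2$.

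The main computational step is then evaluating the middle term. By the Chinese Remainder Theorem, it suffices to handle each prime power $p^k \| N$ separately. A quick local count, split by the three cases according to the Kronecker symbol $\chi(p) := \left(\frac{\Delta_K}{p}\right)$—split ($\chi(p)=1$, $\OO_K/p^k \cong (\Z/p^k)^2$), inert ($\chi(p)=-1$, $\OO_K/p^k$ is a local ring with residue field $\F_{p^2}$), and ramified ($\chi(p)=0$)—yields in every case
\[ \#(\OO_K/p^k\OO_K)^\times = p^{2k}(1-1/p)(1-\chi(p)/p). \]
Dividing by $\#(\Z/p^k\Z)^\times = p^k(1-1/p)$ and taking the product over $p \mid N$ gives
\[ \frac{\#(\OO_K/N\OO_K)^\times}{\#(\Z/N\Z)^\times} = N\prod_{p\mid N}\left(1 - \left(\frac{\Delta_K}{p}\right)\frac{1}{p}\right). \]
Combining this with the exact sequence and the evaluation $[\OO_K^\times:\OO(N)^\times]=w_K/2$ yields the stated formula for $h(\OO(N))/h_K$. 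No step presents a real obstacle; the only mildly delicate point is the three-way local case analysis, which is entirely routine. Indeed, the result is literally \cite[Thm. 7.24 and Thm. 7.28]{Cox89}, so a brief proof sketch with a citation should suffice.
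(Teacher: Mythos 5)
Your proposal is correct, and it is the same approach as the paper: the paper dispatches this theorem with the single citation ``See e.g.\ \cite[Cor.~7.24]{Cox89},'' and your sketch is a faithful reconstruction of that standard proof (conductor exact sequence for $\Pic(\OO(N)) \to \Pic(\OO_K)$, the local unit count giving $\#(\OO_K/N\OO_K)^\times/\#(\Z/N\Z)^\times$, and the observation $[\OO_K^\times:\OO(N)^\times]=w_K/2$ for $N\geq 2$).
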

\begin{proof}
See e.g. \cite[Cor. 7.24]{Cox89}.
\end{proof}

\noindent For a number field $F$, $N \in \Z^+$ and $E_{/F}$ an elliptic curve, we denote by 
$\rho_N$ the homomorphism 
\[ \gg_F \ra \Aut E[N] \cong \GL_2(\Z/N\Z), \]
the \textbf{modulo N Galois representation}. If $E_{/F}$ has CM by the order $\OO$ in $K$, then $E[N] \cong_{\OO} \OO/N\OO$ -- i.e., 
we have an isomorphism of $\OO$-modules (see \cite[Lemma 1]{Parish89}, generalized in Lemma \ref{INVLEMMA1} below) -- and provided $F \supset K$ we have
\[
\rho_{N} \colon \gg_F \hookrightarrow \Aut_{\OO}E[N] \cong \GL_1(\OO/N\OO)=(\OO/N\OO)^{\times}.
\]
In other words, the image of the mod $N$ Galois representation lands in the  \textbf{mod N Cartan subgroup}
\[ C_N(\OO) = (\OO/N\OO)^{\times}. \]

\begin{lemma}
\label{LASTLEMMA}
Let $\OO$ be an order of discriminant $\Delta$, and let $N = p_1^{a_r} \cdots p_r^{a_r} \in \Z^+$.  \\
a) We have $C_N(\OO) = \prod_{i=1}^r C_{p_i^{a_i}}(\OO)$ (canonical isomorphism).  \\
b) We have $\# C_N(\OO) = N^2 \prod_{p \mid N} \left(1 - \left(\frac{\Delta}{p}\right) \frac{1}{p} \right)\left(1-\frac{1}{p} \right)$.
\end{lemma}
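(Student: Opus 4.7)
\noindent\textbf{Proof plan for Lemma \ref{LASTLEMMA}.}

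Part (a) is a direct consequence of the Chinese Remainder Theorem. Since $\OO$ is a free $\Z$-module of rank $2$ and $N = \prod p_i^{a_i}$ with the $p_i^{a_i}$ pairwise coprime, the ring map $\OO/N\OO \to \prod_{i=1}^r \OO/p_i^{a_i}\OO$ induced by reduction is an isomorphism (base-change the isomorphism $\Z/N\Z \cong \prod \Z/p_i^{a_i}\Z$ to the flat $\Z$-module $\OO$). Passing to unit groups yields the canonical product decomposition $C_N(\OO) = \prod C_{p_i^{a_i}}(\OO)$.

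By (a), it suffices for (b) to prove $\#C_{p^a}(\OO) = p^{2a}\bigl(1 - \bigl(\tfrac{\Delta}{p}\bigr)\tfrac{1}{p}\bigr)\bigl(1 - \tfrac{1}{p}\bigr)$ for a single prime power $p^a$. I would first reduce mod $p$: the surjection $\OO/p^a\OO \twoheadrightarrow \OO/p\OO$ has kernel $p\OO/p^a\OO$, a nilpotent ideal of size $p^{2(a-1)}$ (using that $\OO/p^a\OO$ is a free $\Z/p^a\Z$-module of rank $2$). Since the kernel is nilpotent it is contained in the Jacobson radical, so an element of $\OO/p^a\OO$ is a unit iff its reduction mod $p$ is; the map $1 + x \mapsto x$ identifies the kernel on unit groups with $p\OO/p^a\OO$. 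Hence
\[
\#(\OO/p^a\OO)^{\times} = p^{2(a-1)} \cdot \#(\OO/p\OO)^{\times},
\]
and the problem reduces to computing $\#(\OO/p\OO)^{\times}$.

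For the latter, I would use the explicit presentation $\OO = \Z[\tau_{\Delta}]$, where $\tau_{\Delta}$ satisfies the monic quadratic $x^2 - \Delta x + \tfrac{\Delta^2 - \Delta}{4}$ of discriminant $\Delta$. Thus $\OO/p\OO \cong \F_p[x]/(\bar f(x))$ where $\bar f$ has discriminant $\Delta \bmod p$, and the isomorphism class of this $2$-dimensional $\F_p$-algebra is determined by the Legendre symbol $\bigl(\tfrac{\Delta}{p}\bigr)$: it is $\F_p \times \F_p$ with $(p-1)^2$ units when $\bigl(\tfrac{\Delta}{p}\bigr) = 1$; it is $\F_{p^2}$ with $p^2 - 1$ units when $\bigl(\tfrac{\Delta}{p}\bigr) = -1$; and it is $\F_p[\epsilon]/(\epsilon^2)$ with $p^2 - p$ units when $p \mid \Delta$. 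In all three cases the unit count equals $p^2\bigl(1 - \bigl(\tfrac{\Delta}{p}\bigr)\tfrac{1}{p}\bigr)\bigl(1 - \tfrac{1}{p}\bigr)$, and multiplying by $p^{2(a-1)}$ gives the desired formula.

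The only genuinely subtle point is remembering that the exponent appearing in the formula is controlled by $\bigl(\tfrac{\Delta}{p}\bigr)$ and \emph{not} by $\bigl(\tfrac{\Delta_K}{p}\bigr)$: when $p \mid \ff$ the prime $p$ could be split or inert in $\OO_K$, but the order $\OO$ fails to be maximal at $p$ and $\OO/p\OO$ is always $\F_p[\epsilon]/(\epsilon^2)$. The uniform computation above handles this case without needing to separate $p \mid \ff$ from $p \mid \Delta_K$, since both contribute $\bigl(\tfrac{\Delta}{p}\bigr) = 0$.
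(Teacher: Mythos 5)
Your proof is correct and follows the same overall strategy as the paper: reduce to prime powers via the Chinese Remainder Theorem (part a), then reduce $C_{p^a}(\OO)$ to $C_p(\OO)$ via the nilpotent kernel of reduction. The difference is that the paper simply cites the formula for $\#C_p(\OO)$ from \cite{TORS1} and the statement about the kernel of $C_{p^a}(\OO)\to C_p(\OO)$ from \cite{CP15}, whereas you supply self-contained proofs of both facts; this makes the argument more transparent at the cost of some length, and your observation that the analysis is governed by $\bigl(\tfrac{\Delta}{p}\bigr)$ rather than $\bigl(\tfrac{\Delta_K}{p}\bigr)$ (since nonmaximality at $p$ forces $\OO/p\OO\cong\F_p[\epsilon]/(\epsilon^2)$ regardless of the splitting in $\OO_K$) is exactly the right point to emphasize. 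One small imprecision: at $p=2$ the claim that ``$\bar f$ has discriminant $\Delta\bmod p$'' and that this determines the factorization type is not quite literally true --- the polynomial discriminant mod $2$ does not distinguish split from inert --- and the symbol $\bigl(\tfrac{\Delta}{2}\bigr)$ must be read as the Kronecker symbol, determined by $\Delta\bmod 8$ (equal to $1$ for $\Delta\equiv 1$, $-1$ for $\Delta\equiv 5$, $0$ for $\Delta\equiv 0\pmod 4$). A short direct check of $\bar f = x^2 - \Delta x + \tfrac{\Delta^2-\Delta}{4}$ modulo $2$ in these three residue classes confirms the stated unit counts, so the conclusion stands; just phrase the $p=2$ step accordingly.
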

\begin{proof}
a) It suffices to tensor the Chinese remainder theorem isomorphism $\Z/N\Z = \prod_{i=1}^r \Z/p_i^{a_i} \Z$ 
with the $\Z$-module $\OO$ and pass to the unit groups. \\
b)  By \cite{TORS1}, for any prime number $p$ we have 
\[\# C_p(\OO) = p^2 \left(1-\left(\frac{\Delta}{p}\right) \frac{1}{p} \right)\left(1-\frac{1}{p}\right). \]
The natural map $C_{p^a}(\OO) \ra C_p(\OO)$ is surjective with kernel of  size $p^{2a-2}$  \cite[p. 3]{CP15}.  Together with 
part a) this shows that if $N = p_1^{a_1} \cdots p_r^{a_r}$ then 
\[ \# C_N(\OO) = \prod_{i=1}^r p_i^{2a_i-2} \left(p_i-1\right)\left(p_i - \left( \frac{\Delta}{p_i} \right) \right) = N^2 \prod_{p \mid N} \left(1 - \left(\frac{\Delta}{p}\right) \frac{1}{p} \right)\left(1-\frac{1}{p} \right).  \qedhere\]
\end{proof}

\subsection{Torsion Kernels}
Let $E_{/\C}$ be an $\OO$-CM elliptic curve.  For a nonzero ideal $I$ of $\OO$, we define the \textbf{I-torsion kernel}
\[ E[I] \coloneqq \{P \in E \mid \forall \alpha \in I, \ \alpha P = 0\}. \]
There is an invertible ideal $\Lambda \subset \OO$ such that 
\[ E \cong \C/\Lambda. \]
If we put 
\[ (\Lambda:I) \coloneqq \{ x \in \C \mid x I \subset \Lambda\} = \{x \in K \mid xI \subset \Lambda\} \]
then we have (immediately) that
\[ E[I] = \{ x \in \C/\Lambda \mid xI \subset \Lambda\} = (\Lambda:I)/\Lambda. \]
Let $ |I| \coloneqq \# \OO/I$.

\begin{lemma}
\label{LAZARUSLEMMA}
Let $I,J \subset \OO$ be nonzero ideals and $E_{/\C}$ be an $\OO$-CM elliptic curve.  \\
a) If $I \subset J$, then $E[J] \subset E[I]$.  \\
b) We have $E[I] \subset E[|I|]$.  In particular 
\[ \# E[I] \leq |I|^2. \]
\end{lemma}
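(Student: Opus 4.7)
The plan is to proceed directly from the definitions, with the only substantive input being the fact that a finite ring annihilates itself by its own cardinality.

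First I would dispatch part (a) as an immediate consequence of the definition of $E[I]$: if $P \in E[J]$ then $\alpha P = 0$ for every $\alpha \in J$, and in particular for every $\alpha \in I \subset J$, so $P \in E[I]$. No uniformization or CM structure is needed here.

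For part (b), the key observation is that $|I| \in I$ as an element of $\OO$. This holds because the additive group of the finite ring $\OO/I$ has order $|I|$, so by Lagrange's theorem $|I| \cdot 1 \equiv 0 \pmod{I}$, i.e., $|I| \in I$. Given $P \in E[I]$, applying the defining condition with $\alpha = |I|$ yields $|I| \cdot P = 0$, so $P \in E[|I|]$. Equivalently, this is an instance of part (a) applied to the inclusion of ideals $(|I|) \subset I$.

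The bound $\# E[I] \leq |I|^2$ then follows from the standard fact that over $\C$ we have $\# E[n] = n^2$ for every positive integer $n$, applied with $n = |I|$. I expect no serious obstacle: part (a) is purely formal, and part (b) rests on the one-line observation that $|I| \in I$ plus the known size of the full $n$-torsion. The only care required is to distinguish the ideal $(|I|) = |I|\OO$ from the ideal $I$ itself and to recognize $E[|I|]$ as $E[(|I|)]$ in the notation introduced just before the lemma.
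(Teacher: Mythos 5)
Your argument is correct and matches the paper's proof in all essentials: part (a) is read off from the definition, and part (b) follows from Lagrange's theorem applied to the additive group of $\OO/I$, giving $|I| \in I$ (equivalently $|I|\OO \subset I$) and then invoking part (a) together with $\# E[n] = n^2$.
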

\begin{proof}
a) This is immediate from the definition.  b) By Lagrange's Theorem, every element of $\OO/I$ is killed by $|I|$, so $|I| \in |I| \OO \subset I$.  Apply part a).
\end{proof}

\begin{lemma}
\label{INVLEMMA1}
If $I$ is an invertible $\OO$-ideal, then 
\[ E[I] = I^{-1} \Lambda / \Lambda \cong_{\OO} \OO/I. \]
In particular $\# E[I] = |I| = \# \OO/I$.
\end{lemma}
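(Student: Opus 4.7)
The plan is to proceed in two stages: first identify $E[I]$ explicitly as the quotient $I^{-1}\Lambda/\Lambda$, and then exhibit an $\OO$-module isomorphism from that quotient onto $\OO/I$.

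For the first stage, the discussion preceding the lemma already gives $E[I] = (\Lambda : I)/\Lambda$, so it suffices to show $(\Lambda : I) = I^{-1}\Lambda$. One inclusion is routine: if $y \in I^{-1}\Lambda$, then $yI \subset (I^{-1}\Lambda)I = \Lambda$, using $II^{-1} = \OO$. For the reverse inclusion, which is where invertibility is essential, I would take $x \in K$ with $xI \subset \Lambda$ and write $x = x \cdot 1 \in x(II^{-1}) = (xI)I^{-1} \subset \Lambda I^{-1}$.

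For the second stage, set $M := I^{-1}\Lambda$; as a product of invertibles, $M$ is an invertible $\OO$-module, and it satisfies $IM = \Lambda$. The claim therefore reduces to the general fact that for any invertible $\OO$-module $M$ and invertible ideal $I$ one has $M/IM \cong_{\OO} \OO/I$. I would prove this locally. Both sides are finite $\OO$-modules annihilated by $I$, hence modules over the Artinian ring $\OO/I$, and each decomposes canonically as the direct sum of its localizations at the finitely many maximal ideals $\pp \supset I$. At each such $\pp$, invertibility supplies generators $\pi$ for $I_\pp$ and $\mu$ for $M_\pp$, and multiplication by $\mu^{-1} \in K$ is an $\OO_\pp$-linear isomorphism carrying $M_\pp$ onto $\OO_\pp$ and $\Lambda_\pp = IM_\pp$ onto $I_\pp$, giving $(M/IM)_\pp \cong (\OO/I)_\pp$. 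Reassembling the local isomorphisms yields a global $\OO$-module isomorphism $I^{-1}\Lambda/\Lambda \cong \OO/I$, from which $\#E[I] = \#\OO/I = |I|$ follows at once.

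The main obstacle is conceptual rather than computational: because $\OO$ is a possibly nonmaximal order, an invertible ideal need not be principal, so one cannot simply divide by a global generator as in the Dedekind case, and a bare count of cardinalities would not yield the claimed $\OO$-module structure. The local-to-global step (or, equivalently, the flatness of the invertible module $M$ together with the triviality of $\operatorname{Pic}(\OO/I)$ for the Artinian quotient) is what circumvents this and upgrades the cardinality identity to an $\OO$-module isomorphism.
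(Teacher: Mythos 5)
Your proof is correct and follows essentially the same route as the paper: the invertibility calculation identifying $E[I]$ with $I^{-1}\Lambda/\Lambda$ is the same, and the module isomorphism is obtained by localizing. The only cosmetic difference is in the final step, where the paper invokes the triviality of $\operatorname{Pic}(\OO/I)$ for the semilocal ring $\OO/I$ (citing a textbook result) while you assemble the local isomorphisms directly via the Chinese Remainder decomposition of the Artinian ring $\OO/I$ --- a reformulation you yourself note is equivalent.
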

\begin{proof}
An ideal $I$ is invertible iff there is an $\OO$-submodule $I^{-1}$ of $K$ such that $I I^{-1} = \OO$.  If so, 
then for $x \in K$ we have 
\[x I \subset \Lambda \iff x I I^{-1} = x \OO \subset I^{-1} \Lambda \iff x \in I^{-1} \Lambda, \]
giving $E[I] = I^{-1} \Lambda/\Lambda$.  Because $\Lambda$ is a locally free $\OO$-module, for all $\pp \in \Spec \OO$
we have $\Lambda_{\pp} \cong \OO_{\pp}$ and thus $(I^{-1} \Lambda / \Lambda)_{\pp} \cong (I^{-1}/\OO)_{\pp} 
\cong (\OO/I)_{\pp}$.   Thus $I^{-1} \Lambda/ \Lambda$ is locally free of rank one as an $\OO/I$-module.  
But the ring $\OO/I$ is semilocal, hence has trivial Picard group: any locally free rank one $\OO/I$-module is 
isomorphic to $\OO/I$ \cite[Cor. 13.38]{Clark-CA}.
\end{proof}

\begin{lemma}
\label{DEDLEMMA}
Let $R$ be a Dedekind domain, and let $M$ be a cyclic torsion $R$-module, and let $N \subset M$ be an $R$-submodule.  Then: \\
a) $N$ is also a cyclic $R$-module.  \\
b) We have $N \cong R/\ann N$.
\end{lemma}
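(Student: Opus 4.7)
The plan is to reduce both claims to a single structural fact: if $R$ is a Dedekind domain and $I \subset R$ is a nonzero ideal, then $R/I$ is a principal ideal ring. Once this is granted, part (a) follows from the ideal correspondence for $R/I$, and part (b) is a general fact about cyclic modules over any commutative ring.

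First I would write $M \cong R/I$ for some ideal $I$. Since $M$ is torsion and $R$ is a domain, we must have $I \neq 0$ (the case $M = 0$ is trivial, with $N = 0$ and $\ann N = R$). The lattice isomorphism theorem identifies submodules of $R/I$ with ideals $J$ satisfying $I \subset J \subset R$, so $N = J/I$ for some such $J$.

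Next I would establish that $R/I$ is a PIR. Unique factorization of ideals in a Dedekind domain gives $I = \pp_1^{a_1} \cdots \pp_r^{a_r}$ for distinct maximal ideals $\pp_i$, and the Chinese Remainder Theorem yields $R/I \cong \prod_{i=1}^r R/\pp_i^{a_i}$. Each factor is canonically isomorphic to $R_{\pp_i}/\pp_i^{a_i} R_{\pp_i}$, the quotient of a discrete valuation ring by a power of its maximal ideal, hence a local principal ideal ring whose ideals form a finite chain. A finite product of principal ideal rings is again a principal ideal ring.

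With this in hand, both parts follow quickly. For (a), the ideal $J/I$ of the PIR $R/I$ is principal, say $J/I = (\bar{a})$, so $N = J/I$ is generated by $\bar{a}$ as an $R/I$-module, and thus also as an $R$-module. For (b), any cyclic $R$-module $N = Rn$ is isomorphic to $R/\ker\varphi$, where $\varphi \colon R \to N$ is $r \mapsto rn$; the kernel is $\ann_R(n)$, and $\ann_R(n) = \ann_R(N)$ because $n$ generates $N$. This gives $N \cong R/\ann N$. The only substantive step is the PIR fact for quotients of a Dedekind domain, which is entirely standard and can be cited from any standard commutative algebra reference.
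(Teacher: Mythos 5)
Your proof is correct and follows essentially the same route as the paper: write $M \cong R/I$, observe that $R/I$ is a principal ideal ring (the paper cites this as ``$R/I$ is principal Artinian'' from a reference, whereas you derive it via CRT and the structure of DVR quotients), conclude that $N$ is cyclic, and then apply the first isomorphism theorem. The only difference is that you prove the key PIR fact rather than citing it.
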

\begin{proof}
Let $I = \ann M$.  Since $M$ is a finitely generated torsion module over a domain, we have $I \neq 0$ and $M \cong R/I$.  
Thus $N \cong I'/I$ for some ideal $I' \supset I$.  The ring $R/I$ is principal Artinian \cite[Thm. 20.11]{Clark-CA}, so 
the ideal $I'/I$ of $R/I$ is principal.  Thus $N$ is a cyclic, torsion $R$-module, so $N \cong R/\ann N$.
\end{proof}

\begin{thm}
\label{Thm2.9}
Let $E_{/\C}$ be an $\OO_K$-CM elliptic curve, and let $M \subset E(\C)$ be a finite $\OO_K$-submodule.  Then $M = E[\ann M] \cong_{\OO_K} \OO_K/\! \ann M$ and thus $\# M = |\! \ann M|$. 
\end{thm}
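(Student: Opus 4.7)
The plan is to combine the two preceding lemmas with the fact that $\OO_K$, being the maximal order, is a Dedekind domain. Since $M$ is finite, there exists $N \in \Z^+$ with $M \subset E[N]$; indeed one may take $N$ to be the exponent of $M$. By Lemma \ref{INVLEMMA1} applied to the (principal, hence invertible) ideal $N\OO_K$, we have $E[N] \cong_{\OO_K} \OO_K/N\OO_K$, which is a cyclic torsion $\OO_K$-module.

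Now I would invoke Lemma \ref{DEDLEMMA}, which applies since $\OO_K$ is Dedekind: any $\OO_K$-submodule of a cyclic torsion $\OO_K$-module is itself cyclic, isomorphic to $\OO_K/\ann M$. Applying this to $M \subseteq E[N]$ yields $M \cong_{\OO_K} \OO_K/\ann M$, and in particular $\# M = |\!\ann M|$.

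Set $I = \ann M$; note $I \neq 0$ because $M$ is annihilated by $N \in \Z^+$. Directly from the definition of the torsion kernel, $M \subseteq E[I]$. Because $\OO_K$ is Dedekind, every nonzero ideal is invertible, so Lemma \ref{INVLEMMA1} gives $\# E[I] = |I|$. Combining with the previous paragraph, $\# M = |I| = \# E[I]$, and the inclusion $M \subseteq E[I]$ of finite groups of the same order forces $M = E[I]$, completing the proof.

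There is really no hard step here; the argument is essentially an assembly of Lemmas \ref{INVLEMMA1} and \ref{DEDLEMMA}. The one point worth flagging is that the entire argument rests on $\OO = \OO_K$: both the cyclicity of submodules (which requires $\OO_K$ to be Dedekind so that quotients by nonzero ideals are principal Artinian) and the equality $\# E[I] = |I|$ (which requires invertibility of $I$) can fail when $\OO$ is a nonmaximal order with noninvertible ideals, which is precisely why the theorem is stated only for the maximal order.
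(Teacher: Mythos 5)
Your proof is correct and follows essentially the same route as the paper's: identify a positive integer $N$ with $M \subset E[N]$, use Lemma \ref{INVLEMMA1} to recognize $E[N]$ as a cyclic $\OO_K$-module, apply Lemma \ref{DEDLEMMA} to conclude $M \cong \OO_K/\ann M$ and hence $\# M = |\!\ann M|$, and then compare with $\# E[\ann M] = |\!\ann M|$ (again via Lemma \ref{INVLEMMA1}, since nonzero ideals of $\OO_K$ are invertible) to force equality of the inclusion $M \subseteq E[\ann M]$. The only cosmetic difference is that you take $N$ to be the exponent of $M$ while the paper takes $N = \# M$; both yield the needed containment $M \subset E[N]$.
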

\begin{proof}
That $M \subset E[\ann M]$ is a tautology.  Because $\OO_K$ is a Dedekind domain, every nonzero $\OO_K$-ideal is invertible, so Lemma \ref{INVLEMMA1} gives $\# E[\ann M] = |\! \ann M|$.  On the other hand, let $\mathfrak{t}=\#M$. Then $M \subset E[\mathfrak{t}] \cong_{\Ok} \Ok/\mathfrak{t}\Ok$, a finite cyclic $\Ok$-module. By Lemma \ref{DEDLEMMA} we have $M \cong \OO_K/\! \ann M$ so $\# M = |\! \ann M|$.  Thus $M = E[\ann M]$, so
Lemma \ref{INVLEMMA1} gives $M \cong \OO_K/\ann M$ and $\# M = |\! \ann M|$.  
\end{proof}
\begin{example}
\label{SUPERTECHNICALREMARK}
Theorem \ref{Thm2.9} fails for all nonmaximal orders. Indeed, let $\OO$ be a nonmaximal order in an imaginary quadratic field $K$.
There is nonzero prime ideal $\pp$ 
of $\OO$ such that the local ring $\OO_{\pp}$ is not a DVR.  If $\pp \cap \Z = (\ell)$, then $\OO/\pp \cong \Z/\ell\Z$.  
Since every ideal of $\OO$ can be generated by two elements, we have $\dim_{\OO/\pp} \pp/\pp^2 = 2$.  
Thus $\# \OO/\pp^2 = \ell^3$ and $(\ell^3) \subset \pp^2$.  It follows that in the quotient ring $\OO/\ell^3 \OO$ 
the maximal ideal $\pp + \ell^3 \OO$ is not principal.  Let $E_{/\C}$ be an $\OO$-CM elliptic curve, so 
$E[\ell^3] \cong_{\OO} \OO/\ell^3 \OO$.  So the $\OO$-submodule $M = \pp E[\ell^3]$ of $E[\ell^3]$ is 
not cyclic and thus not isomorphic to $\OO/\! \operatorname{ann} M$. 
\end{example}

\noindent
For a nonzero ideal $I$ of $\OO_K$, let $K^I$ denote the $I$-\textbf{ray class field} of $K$.

\begin{thm}(First Main Theorem of Complex Multiplication)
\label{FIRSTCM}
Let $E_{/\C}$ be an $\OO_K$-CM elliptic curve, and let $I$ be a nonzero ideal of $\OO_K$.  Let $\mathfrak{h} \colon E \ra \PP^1$ 
be a Weber function.  Then:
\[ K^{(1)}(\mathfrak{h}(E[I])) = K^I. \]
\end{thm}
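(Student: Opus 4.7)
The plan is to combine the complex uniformization of $E$ with the main reciprocity law of complex multiplication (Shimura), then unpack the Weber function condition. First I would observe that both sides of the claimed equality depend only on the $\C$-isomorphism class of $E$: the right side manifestly, and the left because the Weber function is, by construction, a generator of the function field of $E/\Aut(E)$, so $\mathfrak{h}(E[I])$ is intrinsic to the $\OO_K$-module structure on $E_{\tors}$. I may therefore normalize and assume $E(\C)=\C/\OO_K$, so that by Lemma \ref{INVLEMMA1}, $E[I]=I^{-1}\OO_K/\OO_K$ carries its tautological action of $\OO_K/I$, and the Weber function becomes (essentially) the quotient map $E\to E/\OO_K^\times$.

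Next I would invoke the classical main theorem of complex multiplication in its adelic form: under the isomorphism $\Psi:\Aut(K^{\ab}/K^{(1)})\stackrel{\sim}{\to}\widehat{\OO_K}^\times/\OO_K^\times$ recalled in (\ref{CLASSICALCFT}), the action of $\sigma\in\Aut(K^{\ab}/K^{(1)})$ on the torsion subgroup $K/\OO_K$ of $\C/\OO_K$ is given by multiplication by (a representative of) $\Psi(\sigma)^{-1}\in\widehat{\OO_K}^\times/\OO_K^\times$, viewed as acting on $K/\OO_K$ through its projection to $\OO_K/\mathfrak{a}$ for any ideal $\mathfrak{a}$ killing a chosen torsion point. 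This is the only deep input of the proof, and it is the step that carries all the arithmetic content; everything else is essentially linear algebra over the semi-local ring $\OO_K/I$.

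With this in hand, I would compute the fixed field of $\mathfrak{h}(E[I])$ explicitly. An automorphism $\sigma$ fixes $\mathfrak{h}(P)$ for every $P\in E[I]$ iff $\sigma(P)\in\OO_K^\times\!\cdot P$ for every such $P$. Because $E[I]\cong_{\OO_K}\OO_K/I$ is a cyclic $\OO_K/I$-module (Lemma \ref{INVLEMMA1}), any $\OO_K/I$-linear automorphism preserving every $\OO_K^\times$-orbit is forced to be scalar multiplication by a single element $u\in\OO_K^\times$, and this in turn happens iff the image $\overline{s}$ of $\Psi(\sigma)$ in $(\OO_K/I)^\times$ lies in $q_I(\OO_K^\times)$. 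Hence the subgroup of $\widehat{\OO_K}^\times/\OO_K^\times$ fixing $\mathfrak{h}(E[I])$ is exactly the preimage of $q_I(\OO_K^\times)$ under the reduction map $\widehat{\OO_K}^\times/\OO_K^\times\to (\OO_K/I)^\times/q_I(\OO_K^\times)$.

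Finally I would identify the resulting quotient with $\Gal(K^I/K^{(1)})$ via class field theory: the idelic description of the $I$-ray class field gives precisely $\Gal(K^I/K^{(1)})\cong(\OO_K/I)^\times/q_I(\OO_K^\times)$, and this is the same quotient of $\widehat{\OO_K}^\times/\OO_K^\times$ just isolated on the Weber side. Thus $K^{(1)}(\mathfrak{h}(E[I]))$ and $K^I$ correspond to the same subgroup of $\Gal(K^{\ab}/K^{(1)})$, so they are equal. The main obstacle is unquestionably the Shimura reciprocity statement used in the second paragraph; everything downstream is module-theoretic bookkeeping that the lemmas of \S 2.2 already supply.
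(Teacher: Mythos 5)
The paper does not actually prove Theorem~\ref{FIRSTCM}: the proof given is a citation to Silverman's \emph{Advanced Topics}, Thm.~II.5.6. Your sketch reconstructs what is, up to cosmetic choices, the standard proof underlying Silverman's own treatment (and Stevenhagen's (\ref{CLASSICALCFT})): reduce to $E=\C/\OO_K$, invoke the id\`elic Main Theorem of CM to describe the action of $\Aut(K^{\ab}/K^{(1)})$ on $K/\OO_K$ as multiplication by $\widehat{\OO_K}^\times/\OO_K^\times$, and then do bookkeeping over the semi-local ring $\OO_K/I$.

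The bookkeeping is correct and the key reduction is sound. Since $E[I]\cong_{\OO_K}\OO_K/I$ is cyclic (Lemma~\ref{INVLEMMA1} applies because every nonzero $\OO_K$-ideal is invertible), the Galois automorphism acts as multiplication by some $s\in(\OO_K/I)^\times$, and evaluating at the generator $1\in\OO_K/I$ immediately shows that ``$s$ preserves every $\OO_K^\times$-orbit'' is equivalent to $s\in q_I(\OO_K^\times)$ -- there is no need to appeal to preservation of \emph{all} orbits, only the orbit of a generator, though what you wrote is fine. The final identification of $\widehat{\OO_K}^\times/(U_I\cdot\OO_K^\times)$ with $(\OO_K/I)^\times/q_I(\OO_K^\times)\cong\Gal(K^I/K^{(1)})$ is exactly the formula in Lemma~\ref{CFTLEMMA}a). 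As you note, the one nontrivial input is Shimura reciprocity (the id\`elic Main Theorem); granting that, everything else closes up. The only small inaccuracy is the parenthetical description of the Weber function as a generator of the function field of $E/\Aut(E)$ -- it generates the function field of $E$ modulo the automorphisms \emph{fixing the identity}, i.e.\ $\OO_K^\times$; the distinction is immaterial to your argument but worth stating carefully since the identification $\mathfrak{h}(P)=\mathfrak{h}(Q)\iff Q\in\OO_K^\times P$ is precisely what you use.
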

\begin{proof}
See e.g. \cite[Thm. II.5.6]{SilvermanII}.
\end{proof}
\noindent
Combining Theorems \ref{Thm2.9} and \ref{FIRSTCM}, we get the class-field theoretic containment 
corresponding to any finite $\OO_K$-submodule of $E(\overline{F})$, for any $\OO_K$-CM elliptic curve 
$E$ defined over a number field $F \supset K$.

\noindent For convenience, we record here the formulas for $[K^I:K^{(1)}]$.  Here, $\varphi$ denotes Euler's totient function and $\varphi_K(I)$ the natural generalization for a nonzero ideal $I$ of $\OO_K$. That is,
\[ \varphi_K(I) = \# (\OO_K/I)^{\times} = |I| \prod_{\pp \mid I} \left(1-\frac{1}{|\pp|} \right).\]

\begin{lemma}
\label{CFTLEMMA}
Let $I$ be a nonzero ideal of $K$.  We put $U(K) = \OO_K^{\times}$ and 
$U_I(K) = \{x \in U(K) \mid x -1 \in I \}$.  \\
a) We have
\[ [K^{I}:K^{(1)}] = \frac{ \varphi_K(I)}{[U(K):U_I(K)]}. \]
b) If $K \neq \Q(\sqrt{-1}), \Q(\sqrt{-3})$, then 
\[ [K^{I}:K^{(1)}] = \begin{cases} \varphi_K(I) & I \mid (2) \\ 
\frac{ \varphi_K(I)}{2} & I \nmid (2) \end{cases}. \]
c) If $K = \Q(\sqrt{-1})$, then
\[ [K^{I}:K^{(1)}] = \begin{cases} \varphi_K(I) & I \mid (1+i) \\
\frac{ \varphi_K(I)}{2} & I \nmid (1+i) \text{ and } I \mid (2) \\
\frac{\varphi_K(I)}{4} & I \nmid (2)
\end{cases}. \]
d) If $K = \Q(\sqrt{-3})$, then 
\[ [K^I:K^{(1)}] = \begin{cases} 1 & I = (1) \\  
\frac{\varphi_K(I)}{2} & I \neq (1) \text{ and } I \mid (\zeta_3-1) \\
\frac{\varphi_K(I)}{3} & I = (2) \\
\frac{\varphi_K(I)}{6} & \text{ otherwise}
\end{cases}. \]
\end{lemma}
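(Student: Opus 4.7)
The plan is to establish part (a) from the standard short exact sequence of class field theory relating the ray class group to the usual class group, and then read off parts (b)--(d) as case-by-case computations of the index $[U(K):U_I(K)]$.

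First, I would recall the exact sequence
\[
1 \longrightarrow U_I(K) \longrightarrow U(K) \longrightarrow (\OO_K/I)^{\times} \longrightarrow \Cl^I(K) \longrightarrow \Cl(K) \longrightarrow 1,
\]
where $\Cl^I(K)$ is the ray class group modulo $I$. By the Galois correspondence of class field theory, $\Gal(K^I/K^{(1)})$ is identified with the kernel of $\Cl^I(K) \to \Cl(K)$, whose order equals $|(\OO_K/I)^{\times}|/[U(K):U_I(K)]$. This gives
\[
[K^I:K^{(1)}] = \frac{\varphi_K(I)}{[U(K):U_I(K)]},
\]
which is part (a).

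For the remaining parts I would compute $[U(K):U_I(K)]$ by going through the possibilities for $U(K)$. When $K \neq \Q(\sqrt{-1}),\Q(\sqrt{-3})$, we have $U(K)=\{\pm 1\}$, and $-1\equiv 1\pmod I$ iff $2\in I$ iff $I\mid (2)$, giving (b). When $K=\Q(\sqrt{-1})$, $U(K)=\langle i\rangle$ is cyclic of order $4$; since $(1+i)$ is the unique prime above $2$ with $(1+i)^2=(2)$ up to units, I would check that $i\equiv 1\pmod I$ iff $I\mid (1+i)$ and $-1\equiv 1\pmod I$ iff $I\mid (2)$, yielding the three-case trichotomy of (c).

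Part (d) is the subtlest case and is where I would expect the main obstacle to lie, because the obvious guess for the generator of the cyclotomic part of $U(K)$ behaves unexpectedly. Here $U(K)=\langle \zeta_6\rangle$ is cyclic of order $6$. Using $1+\zeta_3+\zeta_3^2=0$ and $\zeta_6=-\zeta_3^2$, one computes $\zeta_6-1=\zeta_3$, which is a \emph{unit}. Thus $\zeta_6 \equiv 1 \pmod I$ forces $I=(1)$, so for $I \neq (1)$ we have $\zeta_6 \notin U_I(K)$ and $U_I(K)\subseteq \langle \zeta_3\rangle \cdot \{\pm 1\}$ only through the subgroups $\{1\}$, $\{\pm 1\}$, or $\{1,\zeta_3,\zeta_3^2\}$. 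Next, $\zeta_3\equiv 1\pmod I$ iff $I\mid (\zeta_3-1)$, a prime of norm $3$; and $-1\equiv 1\pmod I$ iff $I\mid (2)$, where $(2)$ is inert in $K$ (since $x^2+x+1$ is irreducible mod $2$). Because the only nontrivial divisor of $(\zeta_3-1)$ is itself, and likewise for $(2)$, the four sub-cases of (d) enumerate exactly the possibilities $[U:U_I]\in\{1,2,3,6\}$, with respective values $\varphi_K(I)/[U:U_I]$ given by the stated formulas. Plugging these indices into part (a) completes the proof.
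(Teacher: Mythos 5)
Your proof is correct and follows the approach the paper has in mind: part (a) is the standard exact sequence of class field theory (the paper cites Cohen, Cor.\ 3.2.4, for precisely this), and parts (b)--(d) are read off by computing the index $[U(K):U_I(K)]$ case by case. You correctly identify and handle the one genuinely subtle point, namely that $\zeta_6 - 1 = \zeta_3$ is a unit, so $U_I(K)$ never contains $\zeta_6$ once $I \neq (1)$, and the subgroup lattice of $\mu_6$ then yields the four-way split in part (d) using the primality of $(\zeta_3-1)$ and the inertness of $(2)$.
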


\begin{proof}
Parts b)-d) can be deduced from a), which appears as \cite[Cor. 3.2.4]{Cohen2}.
\end{proof}

\subsection{On Weber Functions} 
Let $F$ be a field of characteristic $0$.  For an elliptic curve $E_{/F}$, let $\mathfrak{h}: E \ra \PP^1$ be a Weber function for 
$E$ (cf. \S 1.3).

\begin{thm}(Weber Function Principle)
\label{WeberFunctionPrinciple}
\label{WFP}
Let $N \in \Z^{\geq 2}$, let $\OO$ be the order of conductor $\ff$ in $K$, and let $F=K(\ff)$. For an $\OO$-CM elliptic curve $E_{/F}$, fix an embedding $F \hookrightarrow \mathbb{C}$ such that $j(E)=j(\mathbb{C}/\OO)$. Define
\[ W(N,\OO) = K(\ff)(\mathfrak{h}(E[N])).\]
a) $W(N,\OO)$ is a subfield of $F(E[N])$ and $[F(E[N]):W(N,\OO) ]$ divides $\begin{cases} \# \OO^{\times} & N \geq 3 \\
\frac{\# \OO^{\times}}{2} & N = 2 \end{cases}$.  \\
b) There is an elliptic curve $E_{/F}$ such that 
\[ [F(E[N]):W(N,\OO) ] = 
\begin{cases} \# \OO^{\times} & N \geq 3 \\
\frac{\# \OO^{\times}}{2} & N = 2 \end{cases}. \]
c) As we range over all elliptic curves $E_{/F}$ with $j(E) = j(\C/\OO)$, we have 
\[ \bigcap_E F(E[N]) = W(N,\OO). \]
\end{thm}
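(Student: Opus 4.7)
The plan is to prove parts (a), (b), (c) sequentially, leveraging the Kummer-theoretic description of twists and Stevenhagen's theorem on the Weber function field. For part (a), the containment $W(N,\OO) \subseteq F(E[N])$ is immediate since $\mathfrak{h}: E \ra \PP^1$ is algebraic on $E$. For the degree bound, every $\sigma \in \Gal(F(E[N])/W(N,\OO))$ fixes $\mathfrak{h}(P)$ for all $P \in E[N]$, and since $\mathfrak{h}$ separates $\Aut(E_{\overline F}) = \OO^\times$-orbits, we have $\sigma(P) \in \OO^\times \cdot P$ for every $P$. Using Lemma \ref{INVLEMMA1} to pick a generator $P_0$ of $E[N] \cong \OO/N\OO$, the relation $\sigma(P_0) = u P_0$ for some $u \in \OO^\times$ yields $\rho_N(\sigma) = q_N(u) \in q_N(\OO^\times)$, injecting $\Gal(F(E[N])/W(N,\OO))$ into $q_N(\OO^\times)$, of size $\#\OO^\times$ for $N \geq 3$ and $\#\OO^\times/2$ for $N = 2$.

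For part (b), fix any $\OO$-CM elliptic curve $E_0/F$. By Stevenhagen's theorem \cite{Stevenhagen01}, the reduced Galois representation $\overline{\rho_N}$ on $E_0$ is surjective, so $\rho_{E_0, N}(\gg_F) \cdot q_N(\OO^\times) = C_N(\OO)$. Setting $m := \#\OO^\times$ and using $\mu_m \subset K \subset F$, twists of $E_0$ are Kummer-parametrized by $F^\times/F^{\times m}$: the twist $E_0^{\chi_a}$ satisfies $\rho_{E_0^{\chi_a}, N}(\sigma) = q_N(\chi_a(\sigma)) \cdot \rho_{E_0, N}(\sigma)$ with $\chi_a(\sigma) := \sigma(a^{1/m})/a^{1/m}$. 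Choose $a \in F^\times$ of order $m$ in $F^\times/F^{\times m}$ with $F(a^{1/m})$ linearly disjoint from $F(E_0[N])$ — available since $F^\times/F^{\times m}$ has infinite rank for the number field $F$. Linear disjointness gives $\Gal(F(a^{1/m}) \cdot F(E_0[N])/F) \cong \mu_m \times \rho_{E_0, N}(\gg_F)$, so $\rho_{E_0^{\chi_a}, N}(\gg_F) = q_N(\OO^\times) \cdot \rho_{E_0, N}(\gg_F) = C_N(\OO)$. For $E := E_0^{\chi_a}$ the bound in (a) is attained.

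For part (c), the inclusion $W(N,\OO) \subseteq \bigcap_E F(E[N])$ follows from (a). For the reverse, it suffices to show two specific twists $E, E'$ satisfy $F(E[N]) \cap F(E'[N]) = W(N,\OO)$. Take $E$ from (b) and $E' := E^{\chi_a}$ with the same Kummer class, and set $L := F(E[N]) \cdot F(a^{1/m})$. By linear disjointness, $\Gal(L/F) \cong \mu_m \times C_N(\OO)$; the subfields $F(E[N])$ and $F(E'[N])$ are fixed by $\mu_m \times \{1\}$ and $\Delta := \{(\zeta, q_N(\zeta)^{-1}) : \zeta \in \mu_m\}$ respectively. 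Computing $\langle \mu_m \times \{1\}, \Delta \rangle = \mu_m \times q_N(\OO^\times)$, we find $[F(E[N]) \cap F(E'[N]) : F] = \#C_N(\OO)/\#q_N(\OO^\times) = \#\overline{C_N(\OO)}$, which equals $[W(N,\OO):F]$ by Stevenhagen. Since $W(N,\OO) \subseteq F(E[N]) \cap F(E'[N])$, equality follows.

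The main obstacle is the Kummer-disjointness step in part (b): constructing a cyclic degree-$m$ extension $F(a^{1/m})$ of $F$ linearly disjoint from the given finite extension $F(E_0[N])$. This is standard but nontrivial, relying on the infinite rank of $F^\times/F^{\times m}$ as a $\Z/m\Z$-module, which allows one to avoid the finitely many subextensions of $F(E_0[N])$ of the form $F(b^{1/d})$ for $d \mid m$. The other key ingredient is Stevenhagen's theorem, which is used as a black box.
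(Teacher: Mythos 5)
Your part (a) mirrors the paper's argument: use \cite[Thm.\ I.7]{LangEll} to identify Weber function fibers with $\OO^{\times}$-orbits, so that $\Gal(F(E[N])/W(N,\OO))$ lands in $q_N(\OO^{\times})$, whose size is $\#\OO^{\times}$ for $N \geq 3$ and $\#\OO^{\times}/2$ for $N=2$.

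Parts (b)--(c) take a genuinely different route, and (c) has a gap. In (b), you import Stevenhagen's surjectivity of the reduced representation as a black box and obtain the linearly disjoint Kummer extension from the infinite $\Z/m\Z$-rank of $F^{\times}/F^{\times m}$. The paper is more self-contained: it never invokes Stevenhagen nor computes $[W(N,\OO):F]$ at this stage; it instead picks a prime $\pp$ of $\OO_F$ unramified in $F' = F(E[N])$, a uniformizer $\pi \in \pp\setminus\pp^2$, sets $L = F(\pi^{1/w})$, and uses ``totally ramified versus unramified'' to get $L\cap F' = F$ with no further bookkeeping. It then deduces $W(N,\OO) = F(E[N])\cap F(E^{\chi}[N])$ from the squeeze $w = [F(E^{\chi}[N]):F(E[N])\cap F(E^{\chi}[N])] \mid [F(E^{\chi}[N]):W(N,\OO)] \mid w$, which proves (b) and (c) simultaneously. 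Your route is logically admissible since Stevenhagen's result is external input, but it adds dependencies and moving parts where the paper's construction is self-contained.

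The gap in (c): having set $E := E_0^{\chi_a}$ in (b), you now take $E' := E^{\chi_a}$ ``with the same Kummer class'' and assert $F(E[N])$ and $F(a^{1/m})$ are linearly disjoint, so that $\Gal(L/F)\cong\mu_m\times C_N(\OO)$. But $a$ was chosen disjoint from $F(E_0[N])$, not from $F(E[N])$, and these fields differ. Concretely, with $G := \rho_{E_0,N}(\gg_F)$ and $M := F(E_0[N])F(a^{1/m})$, one has $\Gal(M/F)\cong G\times\mu_m$ and
\[ F(a^{1/m}) \cap F(E[N]) = M^{\,G \times \{\zeta\in\mu_m \,:\, q_N(\zeta)\in G\}}, \]
which is strictly larger than $F$ precisely when $q_N(\OO^{\times})\not\subset G$. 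That failure is generic: $q_N(\OO^{\times})\subset G$ together with $G\,q_N(\OO^{\times}) = C_N(\OO)$ forces $G = C_N(\OO)$, so the disjointness you need holds only if $\rho_{E_0,N}$ was already surjective. The repair is easy: once (b) produces $E$ with $\rho_{E,N}(\gg_F) = C_N(\OO)$, choose a \emph{fresh} Kummer class $b$ with $F(b^{1/m})$ linearly disjoint from $F(E[N])$ and set $E' := E^{\chi_b}$; your Galois-lattice computation is then correct. (Alternatively, use the paper's ramification trick, which sidesteps the issue.) A small additional point: the inclusion $W(N,\OO)\subseteq\bigcap_E F(E[N])$ requires model-independence of the Weber function field across all twists, not just part (a) applied to a single model.
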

\begin{proof} 
a) Let $w = \begin{cases} \# \OO^{\times} & N \geq 3 \\
\frac{\# \OO^{\times}}{2} & N = 2 \end{cases}$.   Let $\mu_w$ be the image of $\OO^{\times} \ra C_N(\OO)$, a cyclic group of 
order $w$.  The field $F(E[N])/F$ is Galois with Galois group $\rho_N(\gg_F) \subset C_N(\OO)$. Since $\mathfrak{h}(P)=\mathfrak{h}(Q)$ for points $P,Q$ on $E$ if and only if there is $\xi \in \OO^{\times}$ such that $\xi(P)=Q$ (e.g. \cite[Thm. I.7]{LangEll}), it follows that \[ W(N,\OO) = F(E[N])^{\rho_N(\gg_F) \cap \mu_w}. \] Thus \[ [F(E[N]):W(N,\OO) ] \mid w. \] 
b), c) If $E_{/F}$, $E'_{/F}$ with $j(E)=j(E')$, then $K(\ff)(\mathfrak{h}(E[N]))=K(\ff)(\mathfrak{h}(E'[N]))$ by the model independence of the Weber function. So  $W(N,\OO) \subset\bigcap_E F(E[N])$. To see that equality holds, let $E_{/F}$ have $j(E) = j(\C/\OO)$.  Let $\pp$ be a prime of $\OO_F$ that is unramified in $F' = F(E[N])$.  By weak 
approximation, there is $\pi \in \pp \setminus \pp^2$.  Put $L = F(\pi^{\frac{1}{w}})$, and let $\chi \colon \gg_F \ra \mu_w$ be a 
character with splitting field $\overline{F}^{\ker \chi} = L$.  (Explicitly, we may take $\chi(\sigma) = \frac{ \sigma(\pi^{1/w})}{\pi^{1/w}}$.)  Then $L/F$ is totally ramified over $\pp$, so $F'$ and $L$ are 
linearly disjoint over $F$.  It follows that 
\[ \rho_{N,E^{\chi}}(\gg_{F'}) = (\rho_{N,E_{/F'}} \otimes \chi)(\gg_{F'}) = \chi(\gg_{F'}) = \mu_w. \]
Thus
\[ w = [F(E^{\chi}[N]):F(E[N]) \cap F(E^{\chi}[N])] \mid [F(E^{\chi}[N]):W(N,\OO) ] \mid w, \]
so $F(E^{\chi}[N])$ has degree $w$ over $W(N,\OO)  = F(E[N]) \cap F(E^{\chi}[N])$.
\end{proof}

\subsection{A Containment of Weber Function Fields}

\begin{thm}
\label{BCS3.16THM}
Let $K$ be an imaginary quadratic field, and let $\OO \subset \OO'$ be orders in $K$. \\
a)  For all $N \in \Z^+$ we have a containment of Weber function fields $W(N,\OO) \supset W(N,\OO')$. \\
b) If $E$ is a $K$-CM elliptic curve defined over a number field $F \supset K$, then we have 
\begin{equation}
\label{BCS3.16EQ}
F(\mathfrak{h}(E[N])) \supset K^{N \OO_K}.
\end{equation}
\end{thm}
\begin{proof}
a) Let $\ff$ (resp. $\ff'$) be the conductor of $\OO$ (resp. of $\OO'$), so $\ff' \mid \ff$.  Let $E_{/K(\ff)}$ be an $\OO$-CM elliptic curve.  
Let $\iota = \iota_{\ff,\ff'} \colon E \ra E'$ be the canonical cyclic $\frac{\ff}{\ff'}$-isogeny to an $\OO'$-CM elliptic curve $(E')_{/K(\ff)}$.  
There is an embedding $K(\ff) \hookrightarrow \C$ such that $E(\C) \cong E/\OO$ and $(E')(\C) \cong E/\OO'$ and $\iota \colon E/\OO \ra E/\OO'$ is the natural map.  Then $\iota$ maps $\frac{1}{N} + \OO$ to $\frac{1}{N} + \OO'$, which generates $E'[N]$ as an $\OO'$-module.  Thus
\[ K(\ff)(E[N]) \supset K(\ff)(E'[N]) \supset K(\ff')(E'[N]) \supset W(N,\OO'), \]
and Theorem \ref{WFP}c) gives 
\[ W(N,\OO) \supset W(N,\OO'). \]
b) This follows from part a) and Theorem \ref{FIRSTCM}.
\end{proof}
\noindent
Though Theorem \ref{BCS3.16THM} is a consequence of Theorem \ref{STEVENHAGEN}, we prove it here as an ingredient in our new proof of Theorem \ref{STEVENHAGEN}.   Part b) had been proved earlier in 
 \cite[Thm. 3.16]{BCS}, but the present argument seems more transparent.

\section{Proof of the Isogeny Torsion Theorem}
\noindent
For a quadratic field $K$ and $d \in \Z^+$ we will write $\OO(d)$ for the unique order in $K$ of conductor $d$.  We recall the setup: let $F \supset K$ be a number field, let $E_{/F}$ be an elliptic curve with endomorphism ring an order $\OO$ 
of conductor $\ff$ in $K$, and let $\ff'$ be a positive integer that divides $\ff$.  Then there is a canonical $F$-rational cyclic $\frac{\ff}{\ff'}$-isogeny 
$\iota_{/\ff'}: E \ra E_{\ff'}$ such that $\End E_{\ff'} = \OO(\ff')$.  
\\ \\
There is a field embedding $F \hookrightarrow \C$ such that $E(\C) \cong \C/\OO(\ff)$, $E_{\ff'}(\C) \cong \C/\OO(\ff')$ 
and $(\iota_{\ff'})_{/\C}$ is the quotient map $\C/\OO(\ff) \ra \C/\OO(\ff')$.  Put $\tau_K = \frac{\Delta_K + \sqrt{\Delta_K}}{2}$, so
$\OO(\ff) = \Z[\ff \tau_K]$ and $\OO(\ff') = \Z[\ff' \tau_K]$.  For $N \in \Z^+$, let
\[ e_{1,\ff}(N) \coloneqq \frac{1}{N} + \OO(\ff), \ e_{2,\ff}(N) \coloneqq \frac{\ff \tau_K}{N} + \OO(\ff), \]
\[ e_{1,\ff'}(N) \coloneqq \frac{1}{N} + \OO(\ff'), \ e_{2,\ff'}(N) \coloneqq \frac{\ff' \tau_K}{N} + \OO(\ff'). \]
Then $\Ker (E[N] \stackrel{\iota_{\ff'}}{\ra} E_{\ff'}[N])$ is cyclic of order $\gcd(N,\frac{\ff}{\ff'})$, 
generated by $\frac{N}{\gcd(N,\frac{\ff}{\ff'})} e_{2,\ff}(N)$, and $\iota_{\ff'}(e_{1,\ff}(N)) = e_{1,\ff'}(N)$. \\ \indent
For finite commutative groups $T_1$ and $T_2$, we have $\# T_1 \mid \# T_2$ if and only if $ \# T_1[\ell^{\infty}] \mid \# T_2[\ell^{\infty}]$ 
for all prime numbers $\ell$.  So it suffices to show: for all prime numbers $\ell$, we have $\# E(F)[\ell^{\infty}] \mid 
\# E_{\ff'}(F)[\ell^{\infty}]$.  Write $\ff = \ell^{c_1} \overline{\ff}$ with $\gcd(\overline{\ff},\ell) = 1$ and $\ff' = 
\ell^{c_2} \overline{\ff'}$ with $\gcd(\overline{\ff'},\ell) = 1$.  Then we have 
\[ \# E(F)[\ell^{\infty}] = \# E_{\ell^{c_1}}(F)[\ell^{\infty}], \ \# E_{\ff'}(F)[\ell^{\infty}] = \# E_{\ell^{c_2}}(F)[\ell^{\infty}], \]
so we may assume that $\ff = \ell^{c_1}$ and $\ff' = \ell^{c_2}$.  Indeed, it is enough to treat the case $c_2 = c_1 -1$, 
since repeated application of this case yields the general case.  So suppose $\ff = \ell^c$ for some $c \in \Z^+$ and $\ff' = \ell^{c-1}$.  By (e.g.) 
the Mordell-Weil Theorem, there are integers $0 \leq a \leq b$ such that 
\[ E(F)[\ell^{\infty}] \cong \Z/\ell^a \Z \oplus \Z/\ell^b \Z. \]
Let $Q \coloneqq \frac{1}{\ell^a} + \OO(\ff) \in E(F)$.  Since $\iota_{\ff'}$ is $F$-rational, we have that $Q' \coloneqq \iota_{\ff'}(Q) = \frac{1}{\ell^a} + \OO(\ff')$ lies in $E_{\ff'}(F)$ and generates $E_{\ff'}[\ell^a]$ as an $\OO(\ff')$-module, so $E_{\ff'}[\ell^a] = E_{\ff'}(F)[\ell^a]$.  If $a = b$, it follows that $\# E(F)[\ell^{\infty}] 
\mid \# E_{\ff'}(F)[\ell^{\infty}]$, so we may assume $b > a$. Since 
$\Ker (E[\ell^{\infty}] \stackrel{\iota_{\ff'}}{\ra} E_{\ff'}[\ell^{\infty}])$ has order $\ell$, we have $\Z/\ell^a \Z \oplus \Z/\ell^{b-1} \Z \hookrightarrow E_{\ff'}(F)[\ell^{\infty}]$. Thus it suffices to show that $E_{\ff'}(F)$ has either a point of order $\ell^b$ or has full $\ell^{a+1}$-torsion.  
\\ \indent
Let $P = E(F)$ be a point of order $\ell^b$, and write $P = \alpha e_{1,\ff}(\ell^b) + \beta e_{2,\ff}(\ell^b)$ with $\alpha,\beta \in \Z/\ell^b \Z$.
If $\ell \nmid \alpha$ then since $\ff = \ell \ff'$ we have that $\iota_{\ff'}(P) = \alpha e_{1,\ff'}(\ell^b) + \ell \beta e_{2,\ff'}(\ell^b)$ has order $\ell^b$ and we are done, so we may assume that $\ell \mid \alpha$, in which case $\ell \nmid \beta$.  With respect to the basis $e_{1,\ff}(\ell^b), e_{2,\ff}(\ell^b)$ 
of $E[\ell^b]$, the image of the mod $\ell^b$ Galois representation on $E$ consists of matrices of the form 
\begin{equation}
\label{ITTEQ1}
\left[ \begin{array}{cc} a & b \ell^{2c} \frac{\Delta_K-\Delta_K^2}{4} \\ b & a + b \ell^c \Delta_K \end{array} \right] \text{ with }a,b \in \Z/\ell^b\Z.
\end{equation}
Since $E(F)$ has full $\ell^a$-torsion, we have $a \equiv 1 \pmod{\ell^{a}}$ and $b \equiv 0 \pmod{\ell^{a}}$. Thus
\[ \rho_{\ell^{a+1}}(\gg_F) \subset \left\{\left[ \begin{array}{cc} 1 + \ell^a A & 0 \\ \ell^a B & 1 + \ell^a A \end{array} \right] \bigg{\vert} A,B \in \Z/\ell^{a+1}\Z\right\}. \] 
Since $\ell^{b-a-1}P =  \alpha e_{1,\ff}(\ell^{a+1}) + \beta e_{2,\ff}(\ell^{a+1})$ is $F$-rational, all 
such matrices in the image of Galois satisfy
\[ \left[ \begin{array}{cc} 1 + \ell^a A & 0 \\ \ell^a B & 1 + \ell^a A \end{array} \right] \left[ \begin{array}{c} 
 \alpha \\ 
\beta \end{array} \right] = \left[ \begin{array}{c} \alpha \\ 
\beta \end{array} \right], \]
and thus $\ell^a \alpha B + \beta + \ell^a A \beta \equiv \beta \pmod{\ell^{a+1}}$. Since $\ell \mid \alpha$, we get  
\[ \ell^a A \beta \equiv -\ell^a \alpha B \equiv 0 \pmod{\ell^{a+1}}, \]
and thus $\ell \mid A$ and $\rho_{\ell^{a+1}}(\gg_F)$ consists of matrices of the form $\left[ \begin{array}{cc} 1 & 0 \\ \ell^a B & 1\end{array} \right]$ for $B \in \Z/\ell^{a+1}\Z$.  It follows that for all $\sigma \in \gg_F$, there is $B \in \Z/\ell^{a+1} \Z$ such that 
\begin{align*}
 \sigma(\iota_{\ff'}(e_{1,\ff}(\ell^{a+1}))) &= \iota_{\ff'}( e_{1,\ff}(\ell^{a+1}) + B \ell^a \iota_{\ff'}(e_{2,\ff}(\ell^{a+1})) \\
 &=
\iota_{\ff'}(e_{1,\ff}(\ell^{a+1})) +  B \ell^a (\ell e_{2,\ff'}(\ell^{a+1}))  
= \iota_{\ff'}(e_{1,\ff}(\ell^{a+1})). 
\end{align*}
Thus $e_{1,\ff'}(\ell^{a+1}) = \iota_{\ff'}(e_{1,\ff}(\ell^{a+1})) \in E_{\ff'}(F)$.  Since the $\OO(\ff')$-submodule generated 
by $e_{1,\ff'}(\ell^{a+1})$ is $E_{\ff'}[\ell^{a+1}]$, we get $\Z/\ell^{a+1}\Z \oplus \Z/\ell^{a+1}\Z \hookrightarrow E_{\ff'}(F)$, 
completing the proof of Theorem \ref{ITT}.

\section{The Projective Torsion Point Field}
\noindent
Let $F$ be a field. For a positive integer $N$ not divisible by the characteristic of $F$ and $E_{/F}$ an elliptic curve, we define the \textbf{projective modulo N Galois representation} as the composite map
\[\PP \rho_N:  \gg_F \xrightarrow{\rho_N} \Aut E[N] \cong \GL_2(\Z/N\Z) \ra \PGL_2(\Z/N\Z) \coloneqq \GL_2(\Z/N\Z)/(\Z/N\Z)^{\times}.\]
The \textbf{projective torsion field} is 
\[ F( \PP E[N]) = \overline{F}^{\Ker \PP \rho_N}. \]
Thus $F( \PP E[N])$ is the unique minimal field extension of $F$ on which the image of $\rho_N$ consists of scalar matrices. It follows that $F(E[N])/F( \PP E[N])$ is a Galois extension with automorphism group a subgroup of $(\Z/N\Z)^{\times}$.

Observe that the projective Galois representation and thus the projective torsion field are unchanged by \emph{quadratic} twists. If $E_{/F}$ has CM by an order of discriminant $\Delta = \ff^2 \Delta_K \neq -3,-4$ and $F \supset K$, then the projective $N$-torsion field is a well-defined abelian extension of $K(\ff)$.  
When 
$\Delta = -4$ (resp. $\Delta = -3$) we have quartic twists (resp. sextic twists) which can change the projective Galois 
representation and the projective torsion field.  

\begin{thm}
\label{PARISHTHM}
Let $\OO$ be an order of discriminant $\Delta = \ff^2 \Delta_K$.  Let $E$ be an $\OO$-CM elliptic curve defined 
over $F = K(\ff)$.  Let $N \geq 2$.  \\
a) (Parish \cite{Parish89}) We have $F(\PP E[N]) \supset K(N\ff)$.  Thus we may put \[d(E,N) = [F(\PP E[N]):K(N\ff)].  \]
b) (Parish \cite{Parish89}) If $\Delta \notin \{-3,-4\}$, then $d(E,N) = 1$, i.e., $F(\PP E[N]) = K(N \ff)$.  \\
c) If $\Delta = -4$, then $d(E,N) \mid 2$.  \\
d) If $\Delta = -3$, then $d(E,N) \mid 3$. 
\end{thm}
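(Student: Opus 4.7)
The plan is to prove (a) by exhibiting a specific cyclic subgroup $C \subset E[N]$ of order $N$ whose quotient has CM by $\OO(N\ff)$, and to prove (b)--(d) by comparing $[F(\PP E[N]):F]$ with $[K(N\ff):F]$.

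For (a), I would fix an embedding $F \hookrightarrow \C$ identifying $E$ with $\C/\OO$, and set $\Lambda' = \OO + \frac{1}{N}\Z$ and $C = \Lambda'/\OO \subset E[N]$. Then $C$ is $\Z$-cyclic of order $N$, and a direct lattice calculation (using $\tau_K^2 = \Delta_K\tau_K + \frac{\Delta_K - \Delta_K^2}{4} \in \OO_K$) shows that the condition $z \cdot \frac{1}{N} \in \Lambda'$ forces $z \in \Z + N\ff\OO_K = \OO(N\ff)$ while $z \cdot \ff\tau_K \in \Lambda'$ is then automatic; hence $\End(\C/\Lambda') = \OO(N\ff)$. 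Thus $E/C \cong \C/\Lambda'$ is an $\OO(N\ff)$-CM elliptic curve, and $K(j(E/C)) = K(N\ff)$. For any $\sigma \in \gg_{F(\PP E[N])}$ the image $\rho_N(\sigma)$ is scalar and therefore fixes $C$ setwise, so $\sigma(j(E/C)) = j(E/\sigma(C)) = j(E/C)$; hence $j(E/C) \in F(\PP E[N])$ and $K(N\ff) \subset F(\PP E[N])$. The case $E \cong \C/\Lambda$ for a nontrivial ideal class is handled by Galois-conjugating, since both $K(N\ff)$ and the projective torsion field behave equivariantly under $\Gal(K(\ff)/K)$.

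For (b)--(d), I would upper-bound $[F(\PP E[N]):F]$ by the order of $C_N(\OO)/(\Z/N\Z)^\times$, since $\rho_N(\gg_F) \subset C_N(\OO)$ and the scalars in $C_N(\OO)$ are exactly $(\Z/N\Z)^\times$. By Lemma~\ref{LASTLEMMA}(b), this order equals $N\prod_{p \mid N,\, p \nmid \ff}\left(1-\left(\frac{\Delta_K}{p}\right)\frac{1}{p}\right)$, and by Theorem~\ref{COX7.28THM} this coincides with $[K(N\ff):F]$ when $\ff \geq 2$ and equals $(2/w_K) \cdot [K(N\ff):F]$ when $\ff = 1$. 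Combined with part (a), this gives $d(E,N) \mid |\OO^\times|/2$, which yields (b) and (c) at once; in case (d) it gives the weaker bound $d(E,N) \leq 3$.

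Upgrading to $d(E,N) \mid 3$ in (d) is the main obstacle, since one must rule out $d(E,N) = 2$. My plan is to use Theorem~\ref{MAINTHM} to identify $K(N\ff)$ with $F(\PP E[N]) \cap W(N,\OO)$: the containment $\subseteq$ follows from part (a) together with $K(N\ff) \subset W(N,\OO) = K^{(N)}K(N\ff)$, and equality should come from a careful degree comparison using Lemma~\ref{LASTLEMMA} and Theorem~\ref{COX7.28THM}. Once this identification is in place, $\Gal(F(\PP E[N])/K(N\ff))$ embeds into $(\Z/N\Z)^\times q_N(\OO^\times)/(\Z/N\Z)^\times \cong q_N(\OO^\times)/\{\pm 1\}$, which for $\Delta = -3$ is the cyclic group $\mu_3$; its only subgroup orders are $1$ and $3$, forcing $d(E,N) \in \{1,3\}$ and completing (d).
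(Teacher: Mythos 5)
Your parts (a)--(c) follow essentially the same route as the paper's proof: part (a) by exhibiting the cyclic subgroup $C$ with $E/C$ having CM by $\OO(N\ff)$ and observing that $C$ is stable under $\gg_{F(\PP E[N])}$, and parts (b)--(c) by comparing $[F(\PP E[N]):F]$ with $[K(N\ff):F]$ via the index $\#\bigl(C_N(\OO)/(\Z/N\Z)^\times\bigr)$ and Theorem \ref{COX7.28THM}. Two small remarks on (a): your lattice computation of $\End(\C/\Lambda')$ is a concrete version of the paper's slicker observation that $z\mapsto Nz$ carries $\OO(\ff)$ into $\OO(N\ff)$ and hence produces the cyclic $N$-isogeny with the desired target; and your worry about $E \cong \C/\Lambda$ for a nontrivial ideal class is already dispatched by your own opening move of choosing the embedding $F\hookrightarrow\C$ so that $E_{/\C}\cong\C/\OO$ --- no extra equivariance argument is needed.

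The real divergence is in part (d), where you take an unnecessary detour. You say the degree comparison only yields $d(E,N)\leq 3$ and that one must ``rule out $d(E,N)=2$,'' and then propose to invoke Theorem \ref{MAINTHM}. But the degree argument already gives a \emph{divisibility}: $\PP\rho_N(\gg_F)$ is a subgroup of the projective Cartan $C_N(\OO)/(\Z/N\Z)^\times$, so by Lagrange's theorem
\[
[F(\PP E[N]):K(\ff)] = \#\PP\rho_N(\gg_F) \;\Big|\; \#\bigl(C_N(\OO)/(\Z/N\Z)^\times\bigr) = N\prod_{p\mid N}\Bigl(1-\Bigl(\tfrac{\Delta}{p}\Bigr)\tfrac{1}{p}\Bigr),
\]
and from part (a) both $K(N\ff)$ and $F(\PP E[N])$ are Galois over $K(\ff)$, so $[K(N\ff):K(\ff)]$ divides $[F(\PP E[N]):K(\ff)]$. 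Dividing, one gets $d(E,N)\mid \tfrac{w_K}{2}$, which is $d\mid 3$ when $\Delta=-3$ (and $d\mid 2$ when $\Delta=-4$, $d=1$ otherwise), exactly as claimed. This is the argument the paper is carrying out in Step 2, and it handles (b), (c), (d) in one stroke.

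Your proposed route through Theorem \ref{MAINTHM} is also logically awkward: while it would not be strictly circular (the proof of Theorem \ref{MAINTHM} in \S 5.2 cites only part (a) of Theorem \ref{PARISHTHM}), you would be appealing to a result whose proof appears a full section later, for the sole purpose of recovering a fact that Lagrange's theorem already gives. The identification $K(N\ff) = F(\PP E[N])\cap W(N,\OO)$ and the embedding of $\Gal(F(\PP E[N])/K(N\ff))$ into $q_N(\OO^\times)/\{\pm 1\}$ are true and could be made to work, but they are not needed, and incorporating them would make the dependency structure of the paper considerably less transparent.
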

\begin{proof}
For $N \in \Z^+$, let $\OO(N)$ be the order of conductor $N$ in $K$.  Thus $\OO = \OO(\ff)$. \\
Step 1: We show that $F(\PP E[N]) \supset K(N\ff)$ in all cases.  \\
There is a field embedding $F \hookrightarrow \C$ such that $E(\C) \cong \C/\OO$.  The $\C$-linear map $z \mapsto Nz$ carries 
$\OO(\ff)$ into $\OO(N\ff)$ and induces a cyclic $N$-isogeny $\C/\OO(\ff) \ra \C/\OO(N\ff)$.  Let $C$ be the kernel of this 
isogeny, viewed as a finite \'etale subgroup scheme of $E_{/\C}$.  Then $C$ has a (unique) minimal field of definition $F(C) \subset 
F(E[N])$, hence of finite degree over $F$.  The field $F(\PP E[N])$ is precisely the compositum of the minimal fields of 
definition of all order $N$ cyclic subgroup schemes $C \subset E_{/\C}$, so $F(C) \subset F(\PP E[N])$.  Since $C$ is $F(\PP E[N])$-rational, the elliptic curve $E/C$ has a model over this field, and thus
\[ F(\PP E[N]) \supset K(j(E/C)) = K(N\ff). \]
Step 2: In view of Step 1, we have $F(\PP E[N]) \supset K(N\ff) \supset K(\ff) = K(j(E))$, so we have $F(\PP E[N]) = K(N\ff)$ iff $[F(\PP E[N]):K(\ff)] \leq [K(N\ff):K(\ff)]$.  We have
\[ [F(\PP E[N]):K(\ff)] = \# \PP \rho_N(\gg_F) \leq \# (\OO/N\OO)^{\times}/(\Z/N\Z)^{\times} = N \prod_{p \mid N} \left(1-
\left(\frac{\Delta}{p} \right) \frac{1}{p} \right). \]
$\bullet$ Suppose $\ff > 1$.  Using Theorem \ref{COX7.28THM} to compute $[K(N\ff):K^{(1)}]$ and $[K(\ff):K^{(1)}]$ gives 
\[ [K(N\ff):K(\ff)] = \frac{ [K(N\ff):K^{(1)}]}{[K(\ff):K^{(1)}]} = N \prod_{p \mid N, \ p \nmid \ff} \left(1-\left(\frac{\Delta}{p}\right) 
\frac{1}{p} \right)  = N \prod_{p \mid N} \left(1-
\left(\frac{\Delta}{p} \right) \frac{1}{p} \right), \]
because $1- \left( \frac{\Delta}{p} \right) \frac{1}{p} = 1$ for all $p \mid \ff$.  Thus $d(E,N)  = 1$ in this case.  \\
$\bullet$ Suppose $\ff = 1$, so $\Delta = \Delta_K$. Then 
\[ [K(N\ff):K(\ff)] = [K(N):K^{(1)}] = \frac{2}{w_K} N \prod_{p \mid N} \left(1-\left(\frac{\Delta}{p} \right) \frac{1}{p} \right). \]
If $\Delta \notin \{-3,-4\}$ then $\frac{2}{w_K} = 1$, and again we get $d(E,N) = 1$.  If $\Delta = -4$ then 
$\frac{2}{w_K} = \frac{1}{2}$, so the calculation shows $d(E,N) \in \{1,2\}$, and if $\Delta = -3$ then $\frac{2}{w_K} = \frac{1}{3}$, 
so the calculation shows $d(E,N) \in \{1,3\}$.  
\end{proof}

\begin{remark}
a) Theorem \ref{PARISHTHM}a) and Theorem \ref{PARISHTHM}b)  are due to Parish \cite[Prop. 3]{Parish89}.  However, Parish alludes to a calculation of the above sort rather than explicitly carrying it out.  Since Theorem \ref{PARISHTHM} 
will play an important role in the proof of Theorem \ref{MAINTHM}, we have given a complete proof. \\ 
b)
In \cite[Prop. 3]{Parish89}, Parish assumes $K \neq \Q(\sqrt{-1}), \Q(\sqrt{-3})$.  Later on  \cite[p. 263]{Parish89}, he claims: \\
$\bullet$ If $\Delta = -4$ then $F(\PP E[N]) = K(N)$ for all $N \geq 3$, and \\
$\bullet$ If $\Delta = -3$ then $F(\PP E[N]) = K(N)$ for all $N \geq 4$. \\
As we will see shortly in Example \ref{DENEXAMPLE}, both claims are false.
\end{remark}

\noindent
The following result is an an analogue of \cite[Thm. 5.6]{BCS} for higher twists.

\begin{prop}(Higher Twisting at the Bottom)
\label{HIGHERBOTTOMPROP}
 \\
For $M \in \Z^+$, we denote the mod $M$ cyclotomic character by $\chi_M$. \\
a) Let $K = \Q(\sqrt{-1})$ and let $\ell \equiv 5 \pmod{8}$ be a prime number.  There is a character $\Psi \colon \gg_K \ra (\Z/\ell\Z)^{\times}$ of order $\frac{\ell-1}{4}$ and an $\OO_K$-CM elliptic curve $E_{/K}$ such that the 
mod $\ell$ Galois representation is 
\[ \sigma \mapsto \rho_{\ell}(\sigma) = \left[ \begin{array}{cc} \Psi(\sigma) & 0 \\ 0 & \Psi^{-1}(\sigma) \chi_\ell(\sigma) \end{array} \right]. \]
b) Let $K = \Q(\sqrt{-3})$ and let $\ell \equiv 7,31 \pmod{36}$ be a prime number.  There is a character $\Psi \colon \gg_K \ra (\Z/\ell\Z)^{\times}$ of order $\frac{\ell-1}{6}$ and an $\OO_K$-CM elliptic curve $E_{/K}$ such that the 
mod $\ell$ Galois representation is 
\[ \sigma \mapsto \rho_{\ell}(\sigma) = \left[ \begin{array}{cc} \Psi(\sigma) & 0 \\ 0 & \Psi^{-1}(\sigma) \chi_\ell(\sigma) \end{array} \right]. \]
\end{prop}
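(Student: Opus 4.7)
The plan is to construct $E$ as a twist of an arbitrary $\OO_K$-CM elliptic curve $E_0/K$, choosing the twist character so as to cancel the $\mu_n$-part of the upper-diagonal character of $\rho_{E_0,\ell}$; here $n = \#\OO_K^\times \in \{4, 6\}$. First, the congruence hypotheses are exactly what is needed: $\ell \equiv 5 \pmod{8}$ (resp.\ $\ell \equiv 7, 31 \pmod{36}$) is equivalent to $\ell$ splitting in $K$ as $\ell \OO_K = \lambda \bar{\lambda}$ together with the exact divisibility $n \| \ell - 1$. Writing $m = (\ell-1)/n$, we have $\gcd(n, m) = 1$, and $\F_\ell^\times$ decomposes canonically as $\mu_n \times \mu_m$. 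The CRT identifies $C_\ell(\OO_K) \cong (\OO_K/\lambda)^\times \times (\OO_K/\bar{\lambda})^\times \cong \F_\ell^\times \times \F_\ell^\times$; fixing the identifications compatibly with complex conjugation (under which $\iota_{\bar{\lambda}}|_{\mu_n} = \iota_\lambda|_{\mu_n}^{-1}$, since $\bar\zeta = \zeta^{-1}$ for $\zeta \in \mu_n$), the canonical image of $\OO_K^\times \hookrightarrow C_\ell(\OO_K)$ is $\{(\alpha, \alpha^{-1}) : \alpha \in \mu_n(\F_\ell)\}$.

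Next, choose any $\OO_K$-CM elliptic curve $E_0/K$; these exist because $h_K = 1$ for both fields. In the basis $E_0[\lambda] \oplus E_0[\bar{\lambda}]$ the mod $\ell$ representation is diagonal, $\rho_{E_0, \ell} = \operatorname{diag}(\psi_1, \psi_2)$, with $\psi_1 \psi_2 = \chi_\ell$. Decompose $\psi_1 = \psi_1^{(n)} \cdot \psi_1^{(m)}$ according to $\F_\ell^\times = \mu_n \times \mu_m$. The key quantitative input is that $\psi_1^{(m)}: \gg_K \to \mu_m$ is surjective. To establish this I combine (i) $K(\mathfrak{h}(E_0[\lambda])) = K^\lambda$ from Theorem \ref{FIRSTCM}; (ii) $[K^\lambda : K] = m$, which follows from $h_K = 1$ and Lemma \ref{CFTLEMMA} once one observes $[U(K) : U_\lambda(K)] = n$ (no nontrivial element of $\mu_n \subset \OO_K$ is $\equiv 1 \pmod{\lambda}$, since $\lambda$ has norm $\ell > n$); and (iii) the Weber Function Principle (Theorem \ref{WFP}), which gives $[K(E_0[\lambda]) : K^\lambda] = |\psi_1(\gg_K) \cap \mu_n|$. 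Together these yield $|\psi_1^{(m)}(\gg_K)| = |\psi_1(\gg_K)|/|\psi_1(\gg_K) \cap \mu_n| = m$.

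Now define $\chi := \iota_\lambda^{-1} \circ (\psi_1^{(n)})^{-1}: \gg_K \to \mu_n = \OO_K^\times$, using that $\iota_\lambda$ restricts to an isomorphism $\mu_n(\OO_K) \xrightarrow{\sim} \mu_n(\F_\ell)$, and let $E := E_0^\chi$ be the corresponding twist, an $\OO_K$-CM elliptic curve over $K$ (twists of $E_0$ are classified by $H^1(\gg_K, \mu_n) = K^\times/(K^\times)^n$). Twisting multiplies $\rho_{E_0,\ell}$ by the scalar $\chi(\sigma) \in \mu_n \hookrightarrow C_\ell(\OO_K)$, which in the split basis acts as $(\iota_\lambda(\chi), \iota_\lambda(\chi)^{-1})$, so $\rho_{E, \ell} = \operatorname{diag}(\iota_\lambda(\chi)\psi_1, \iota_\lambda(\chi)^{-1}\psi_2)$. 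By construction $\Psi := \iota_\lambda(\chi)\psi_1 = \psi_1^{(m)}$, which has order exactly $m = (\ell-1)/n$, and $\psi_1 \psi_2 = \chi_\ell$ gives the second entry $\iota_\lambda(\chi)^{-1}\psi_2 = \Psi^{-1}\chi_\ell$, as required; both parts (a) and (b) are handled uniformly, with only the value of $n$ changing. The main obstacle is the surjectivity of $\psi_1^{(m)}$: this is where the congruence conditions play an essential role (guaranteeing $\gcd(n,m) = 1$ so that $\F_\ell^\times$ factors cleanly as $\mu_n \times \mu_m$), and it is the step converting the class-field-theoretic identification of the Weber function field of $\lambda$-torsion with $K^\lambda$ into quantitative control on the order of a component of the mod $\ell$ character; everything else is formal twist arithmetic.
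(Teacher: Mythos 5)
Your proof is correct and reaches the same conclusion via a genuinely different route from the paper, so a comparison is worthwhile. Both arguments produce the same twist (your $\chi$ is exactly the paper's $\psi$: the $\mu_n$-valued character cancelling the $\mu_n$-component of $\psi_1$), but the way you establish the \emph{exact} order of the resulting character $\Psi$ differs. The paper shows an upper bound (by construction, $\Psi_2$ has trivial image in $Q_n(\ell)$, so $\operatorname{ord}\Psi_2 \mid \frac{\ell-1}{n}$) and a separate lower bound (from $K^{(\ell)} \subset K(E^{\psi}[\ell])$ and $[K^{(\ell)}:K] = \frac{(\ell-1)^2}{n}$), then squeezes. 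You instead compute the order of the $\mu_m$-component $\psi_1^{(m)}$ directly, before twisting: from Theorem \ref{FIRSTCM} with the ideal $I = \lambda$ you get $K(\mathfrak{h}(E_0[\lambda])) = K^{\lambda}$ (using $h_K = 1$), hence $[K^{\lambda}:K] = m$, and combining with the identification $[K(E_0[\lambda]):K^{\lambda}] = \#(\psi_1(\gg_K)\cap\mu_n)$ yields $\#\psi_1^{(m)}(\gg_K) = m$ outright. Your approach makes the role of the split prime $\lambda$ more visible and avoids the two-sided divisibility argument; the paper's approach leans on the full-torsion inclusion $K^{(\ell)} \subset K(E[\ell])$ instead. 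One small caveat: you cite Theorem \ref{WFP} for the identity $[K(E_0[\lambda]):K^\lambda] = \#(\psi_1(\gg_K)\cap\mu_n)$, but that theorem is stated for $E[N]$ rather than $E[\lambda]$; the underlying fact (that $\mathfrak{h}(P) = \mathfrak{h}(Q)$ iff $Q \in \OO^{\times}P$, so the Weber function field is the fixed field of $\rho(\gg_K)\cap\mu_n$) applies to $E[\lambda]$ by the identical argument, so this is not a gap but a cosmetic imprecision in the citation. Likewise, the sign convention in ``twisting multiplies $\rho_{E_0,\ell}$ by $\chi$'' should be checked against one's preferred convention, but it only flips $\chi$ to $\chi^{-1}$ and does not affect correctness.
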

\begin{proof}
a) Let $(E_1)_{/K}$ be an $\OO_K$-CM elliptic curve.  Because $\ell \equiv 1 \pmod{4}$ the Cartan subgroup $C_{\ell}(\OO_K)$ is split. It follows that there are precisely two $C_{\ell}(\OO_K)$-stable one-dimensional $\Z/\ell\Z$-subspaces of $E_1[\ell]$, so we may take basis vectors 
$e_1$ and $e_2$ for $E_1[\ell]$ lying in these two subspaces.   For such a basis, the mod $\ell$ Galois representation has the form
\[ \sigma \mapsto \rho_{\ell}(\sigma) = \left[ \begin{array}{cc} \Psi_1(\sigma) & 0 \\ 0 & \Psi_1^{-1}(\sigma) \chi_\ell(\sigma) \end{array} \right] \]
for a character $\Psi_1 \colon \gg_K \ra (\Z/\ell\Z)^{\times}$.  Under this isomorphism, the matrix representation of $i \in \OO_K$ is a 
diagonal matrix $\left[ \begin{array}{cc} z & 0 \\ 0 & z^{-1} \end{array} \right]$, where $z$ is a primitive $4$th root of unity in $\Z/\ell\Z$.  A general $\OO_K$-CM elliptic curve over $K$ is of the form $E_1^{\psi}$ for a character $\psi \colon \gg_K \ra \mu_4 \subset (\Z/\ell\Z)^{\times}$.  Let $Q_4(\ell) = (\Z/\ell\Z)^{\times}/(\Z/\ell \Z)^{\times 4}$.  Then the image of $z$ in $Q_4(\ell)$ has order $4$: if not, there 
is $w \in (\Z/\ell\Z)^{\times}$ such that $z = w^2$, and then $w$ has order $8$ in $(\Z/\ell \Z)^{\times}$, contradicting the 
assumption that $\ell \equiv 5 \pmod{8}$.  Thus the natural map $\mu_4 \ra Q_4(\ell)$ given by $i \mapsto z \pmod{(\Z/\ell\Z)^{\times 4}}$ is an isomorphism; 
we denote the inverse isomorphism $Q_4(\ell) \ra \mu_4$ by $\iota$.  Now take  
\[ \psi \colon \gg_K \stackrel{\Psi_1^{-1}}{\ra}  (\Z/\ell\Z)^{\times} \stackrel{q}{\ra} Q_4(\ell) \stackrel{\iota}{\ra} \mu_4;\]
here $q$ is the quotient map.
Let $\Psi_2 = \psi \Psi_1$.  Then the twist $E_1^{\psi}$ has mod $\ell$ Galois representation 
\[ \sigma \mapsto \rho_{\ell}(\sigma) = \left[ \begin{array}{cc} \Psi_2(\sigma) & 0 \\ 0 & \Psi_2^{-1}(\sigma) \chi_\ell(\sigma) \end{array} \right]. \]
The composite $\Psi_2 \colon \gg_K \ra (\Z/\ell\Z)^{\times} \ra  Q_4(\ell)$ is trivial, so $\Psi_2(\gg_K)$ has order $c \mid \frac{\ell-1}{4}$. Thus 
\[\# \rho_{\ell,E_1^{\psi}}(\gg_K) \mid c(\ell-1) \mid \frac{ (\ell-1)^2}{4} = [K^{(\ell)}:K^{(1)}] = [K^{(\ell)}:K], \]
where the last equality holds since $K$ has class number 1. Because $K(E_1^{\psi}[\ell]) \supset K^{(\ell)}$, we have $\# \rho_{\ell,E_1^{\psi}}(\gg_K) = \frac{(\ell-1)^2}{4}$ and 
$c = \frac{\ell-1}{4}$.  \\
b) Since $\ell \equiv 1 \pmod{3}$, we have a primitive $6$th root of unity $z$ in $\Z/\ell\Z$.  
Since $\ell \equiv 7,  31 \pmod{36}$, we have $4,9 \nmid \ell-1$, so $z$ has order $6$ in $Q_6(\ell) = (\Z/\ell\Z)^{\times}/(\Z/\ell \Z)^{\times 6}$.   Also $\frac{(\ell-1)^2}{6} = [K^{(\ell)}:K^{(1)}]$.  The argument of part a) carries over.
\end{proof}

\begin{example}
\label{DENEXAMPLE}
a) Let $K = \Q(\sqrt{-1})$, and let $\ell \equiv 5 \pmod{8}$.  Let $E_{/K}$ be an $\OO_K$-CM elliptic curve 
with mod $\ell$ Galois representation as in Proposition \ref{HIGHERBOTTOMPROP}a).   Then for a number field $L \supset K$, 
$\rho_{\ell}|_{\gg_L}$ has scalar image iff $\chi_{\ell} \Psi^{-2} |_{\gg_L}$ is trivial.  Since $\chi_{\ell} \colon \gg_K \ra (\Z/\ell \Z)^{\times}$ has order $\ell-1$ -- 
that is, for all $1 \leq k < \ell-1$, $\chi_{\ell}^k \neq 1$ -- and $\Psi^{-2}$ has order dividing $\frac{\ell-1}{4}$, the character $\chi_{\ell} \Psi^{-2}$ has order $\ell-1$.  Thus $[K(\PP E[\ell]):K] = \ell-1$,whereas $[K(\ell):K] = \frac{\ell-1}{2}$.  So $d(E,\ell) = 2$. \\
b) Let $K = \Q(\sqrt{-3})$, and let $\ell \equiv 7,31 \pmod{36}$.  Let $E_{/K}$ be an $\OO$-CM elliptic curve 
with mod $\ell$ Galois representation as in Proposition \ref{HIGHERBOTTOMPROP}b).  As in part a), we have
$[K(\PP E[\ell]):K] = \ell-1$ and $[K(\ell):K] = \frac{\ell-1}{3}$.  So $d(E,\ell) = 3$. 
\end{example}

\begin{prop}
\label{WEAKERPROP}
Let $\OO$ be an order of discriminant $\Delta = \ff^2 \Delta_K$, and let $N \in \Z^+$.  Then there is an $\OO$-CM elliptic curve $E_{/K(N\ff)}$ such that 
the mod $N$ Galois representation consists of scalar matrices.
\end{prop}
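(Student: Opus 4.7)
The plan is to take an $\OO$-CM elliptic curve $E_0$ defined over $K(\ff)$, and either base-change to $K(N\ff)$ or twist over $K(N\ff)$. By Theorem~\ref{PARISHTHM}, the projective torsion field $L_0 := K(\ff)(\PP E_0[N])$ contains $K(N\ff)$ with index $d(E_0, N)$ dividing $w/2$, where $w := \#\OO^\times$. Whenever $d(E_0, N) = 1$ --- and by Theorem~\ref{PARISHTHM}(b) this is automatic when $\Delta \notin \{-3, -4\}$ --- the base change $E := (E_0)_{/K(N\ff)}$ has $K(N\ff)(\PP E[N]) = K(N\ff)$, equivalently $\rho_{N, E}(\gg_{K(N\ff)}) \subseteq (\Z/N\Z)^\times$, as required.

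In the exceptional cases $\Delta \in \{-3, -4\}$ we have $\ff = 1$, $\OO = \OO_K$, $K(N\ff) = K(N)$, and $w \in \{4, 6\}$. Here I would realize $E$ as a twist $E := E_0^\psi$ of $E_0$ over $K(N)$ by a continuous character $\psi \colon \gg_{K(N)} \to \mu_w = \OO_K^\times$. Since $\rho_{N, E}(\sigma) = \psi(\sigma)\rho_{N, E_0}(\sigma)$, the scalar condition reduces to $\bar\psi \cdot \PP\rho_{N, E_0}|_{\gg_{K(N)}} = 1$ in $C_N(\OO_K)/(\Z/N\Z)^\times$, where $\bar\psi$ denotes the image of $\psi$ in the subgroup $\mu_w/\{\pm 1\}$ of the Cartan quotient. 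Taking $E_0/K$ with surjective mod-$N$ Galois representation, one uses Theorem~\ref{FIRSTCM} to identify $K(\mathfrak{h}(E_0[N])) = K^{(N)}$, Parish's Theorem to see $K(\PP E_0[N]) \supseteq K(N)$, and the degree formulas of Theorem~\ref{COX7.28THM} and Lemma~\ref{CFTLEMMA} to obtain the key identity $K(N) = K^{(N)} \cap K(\PP E_0[N])$; by Galois correspondence this pins down $\Gal(K(\PP E_0[N])/K(N))$ as precisely the image of $\mu_w/\{\pm 1\}$ in $C_N(\OO_K)/(\Z/N\Z)^\times$, so $\PP\rho_{N, E_0}|_{\gg_{K(N)}}$ does factor through this subgroup (and for any further twist of $E_0$, the restricted image remains inside $\mu_w/\{\pm 1\}$ since multiplying by a character into $\mu_w/\{\pm 1\}$ stays inside it).

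Setting $\bar\psi := \bigl(\PP\rho_{N, E_0}|_{\gg_{K(N)}}\bigr)^{-1}$, the remaining task is to lift this $\mu_w/\{\pm 1\}$-valued character to a $\mu_w$-valued character $\psi$. For $\Delta = -3$, the decomposition $\mu_6 = \mu_2 \times \mu_3$ furnishes a canonical splitting $\mu_3 \hookrightarrow \mu_6$ of $\mu_6 \twoheadrightarrow \mu_3$, so the lift is immediate. For $\Delta = -4$, the extension $1 \to \mu_2 \to \mu_4 \to \mu_2 \to 1$ is non-split, but because $\mu_4 \subset K \subset K(N)$ Kummer theory identifies $\Hom(\gg_{K(N)}, \mu_n) \cong K(N)^\times/K(N)^{\times n}$ for $n \in \{2, 4\}$, and the natural reduction map $K(N)^\times/K(N)^{\times 4} \twoheadrightarrow K(N)^\times/K(N)^{\times 2}$ is surjective, producing the required lift. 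The chief subtlety is the degree-counting identification of the second paragraph, which relies on the existence of an $\OO_K$-CM curve $E_0/K$ with surjective mod-$N$ Galois representation --- a nontrivial input provided, for certain primes $N$, by Proposition~\ref{HIGHERBOTTOMPROP}, with the general case requiring additional twisting constructions.
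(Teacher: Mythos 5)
The case $\Delta \notin \{-3,-4\}$ matches the paper's treatment exactly: base change the $K(\ff)$-model to $K(N\ff)$ and cite Theorem~\ref{PARISHTHM}b). For $\Delta \in \{-3,-4\}$, however, your approach is genuinely different from the paper's, and it has a real gap.

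Your strategy is to start from an $\OO_K$-CM curve $E_0/K$ with \emph{surjective} mod $N$ Galois representation, use the degree count to identify $\Gal(K(\PP E_0[N])/K(N))$ with the image of $\mu_{w_K}/\{\pm1\}$ in $C_N(\OO_K)/(\Z/N\Z)^\times$, and then twist away that $\mu_{w_K}/\{\pm1\}$-valued character. The degree count itself is correct (granting surjectivity), and the lifting step is fine: for $w_K=6$ the quotient $\mu_6 \to \mu_6/\{\pm1\}$ splits, and for $w_K=4$ the Kummer-theoretic argument using $K(N)^\times/K(N)^{\times 4} \twoheadrightarrow K(N)^\times/K(N)^{\times 2}$ does produce a lift. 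But the whole construction hinges on having such an $E_0$. You acknowledge that Proposition~\ref{HIGHERBOTTOMPROP} only provides it for a restricted set of primes $\ell$, and you leave the general $N$ to ``additional twisting constructions.'' That is not a proof. The natural reference that would supply the needed surjective curve is Corollary~\ref{LargeTwistCor}, but that is a consequence of Theorem~\ref{MAINTHM}, which appears later; one would have to verify (as is in fact true) that the proof of Theorem~\ref{MAINTHM} does not depend on Proposition~\ref{WEAKERPROP} before invoking it, and even then the exposition order would be upside-down. Without the surjectivity input, your second paragraph cannot conclude that $\PP\rho_{N,E_0}|_{\gg_{K(N)}}$ has image inside $\bar\mu_{w_K}$ --- knowing only that the image has order dividing $w_K/2$ does not by itself place it inside that particular subgroup of $C_N(\OO_K)/(\Z/N\Z)^\times$.

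The paper's argument for $\Delta \in \{-3,-4\}$ sidesteps all of this. It takes $\tilde{E}/K(N)$ with CM by the order of conductor $N$, lets $\iota: \tilde{E} \to E$ be the canonical $K(N)$-rational isogeny to an $\OO_K$-CM curve $E$, and looks at $C = \ker \iota^\vee$, a $K(N)$-rational cyclic subgroup of $E[N]$ generated by $P_1 = \frac{1}{N}+\OO_K$ under the identification $E \cong \C/\OO_K$. Galois acts on $P_1$ by an isogeny character $\Psi$. Setting $P_2 = \zeta P_1$ for a generator $\zeta$ of $\mu_{w_K}$, the pair $\{P_1,P_2\}$ is a $\Z/N\Z$-basis of $E[N]$ (because $\OO_K=\Z[\zeta]$ here), and since $\zeta \in \End(E)$ commutes with the Galois action, $\sigma P_2 = \Psi(\sigma) P_2$ as well. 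So $E/K(N)$ already has scalar mod $N$ representation; no twisting and no surjectivity input is needed. That construction is shorter, self-contained, and independent of the Main Theorem, which is why it is the right proof at this point in the paper.
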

\begin{proof}
When $\Delta \notin \{-3,-4\}$, this is immediate from Theorem \ref{PARISHTHM}b): in that case, the elliptic curve has a 
model defined over $K(\ff)$.  Thus we may assume that $\Delta \in \{-3,-4\}$, so $\ff = 1$. Let $\zeta \in \OO_K^{\times}$ be a primitive $w_K$th root of unity.  Let $\OO(N)$ be the order in $K$ of conductor $N$, let $\tilde{E}_{/K(N)}$ be an $\OO(N)$-CM elliptic curve,  and let $\iota \colon \tilde{E} \ra E$ be the canonical $K(N)$-rational 
isogeny to an $\OO_K$-CM elliptic curve $E$, let $\iota^{\vee} \colon E \ra \tilde{E}$ be the dual isogeny, and let $C$ be the kernel of $\iota^{\vee}$.  Identifying $E[N]$ with $N^{-1} \OO_K/\OO_K \subset \C/\OO_K$, $\iota^{\vee} \colon \C/\OO_K \ra \C/\OO(N)$ is the map
$z + \OO_K \mapsto Nz + \OO(N)$, so $C$ is the $\Z$-submodule of $\C/\OO_K$ generated by $P_1 = \frac{1}{N} + \OO_K$.  Because 
$C$ is stable under the action of $\gg_{K(N)}$, this action is given by an isogeny character, say 
\[ \sigma(P_1) = \Psi(\sigma) P_1. \]
Let $P_2 = \zeta P_1$.  Then $\{P_1,P_2\}$ is a $\Z/N\Z$-basis for $E[N]$.  Moreover, for $\sigma \in \gg_{K(N)}$,
\[ \sigma P_2 = \sigma \zeta P_1 = \zeta \sigma P_1 = \zeta \Psi(\sigma) P_1 = \Psi(\sigma) \zeta P_1 = \Psi(\sigma) P_2. \]
It follows that $\sigma \in \gg_{K(N)}$ acts on $E[N]$ via the scalar matrix $\Psi(\sigma)$.  
\end{proof}

\section{Proof of Theorem \ref{STEVENHAGEN} and Its Corollaries}

\subsection{An Equality of Class Fields} Let $\OO$ and $\OO'$ be orders in an imaginary quadratic field $K$ of conductors $\ff$ and $N\ff$, respectively.  Here we prove  $K(\ff)^{N \OO_K} = K^{N \OO_K}K(N\ff)$.  We may assume that $N \geq 2$.  Class field theory gives a canonical isomorphism 
\begin{equation}
\label{CLASSICALCFT}
 \Psi \colon \Aut(K^{\ab}/K(\ff)) \stackrel{\sim}{\ra} \widehat{\OO}^{\times}/\OO^{\times} 
\end{equation}
\cite[(3.2)]{Stevenhagen01}.  Thus it suffices to prove an equality of open subgroups of $\widehat{\OO_K}^{\times}/\OO_K^{\times}$.  We abbreviate 
\[ \OO_p \coloneqq \OO \otimes \Z_p. \]
Put
\[ A \coloneqq \{x \in \widehat{\OO}^{\times} \mid x \equiv 1 \pmod{N} \} = \prod_{p \nmid N} \OO_p^{\times} 
\times \prod_{p \mid N} (1+N \OO_p), \ \tilde{A} \coloneqq A \OO_K^{\times},\]
\[ B \coloneqq \widehat{\OO'}^{\times} = \prod_p (\OO')_p^{\times}, \ \tilde{B} \coloneqq B \OO_K^{\times}, \]
\[ C \coloneqq \{x \in \widehat{\OO_K}^{\times} \mid x \equiv 1 \pmod{N} \} = \prod_{p \nmid N} (\OO_K)_p^{\times} \times
\prod_{p \mid N}  (1+N(\OO_K)_p), \  \tilde{C} \coloneqq C \OO_K^{\times}. \]
Under class field theory, the field $K(\ff)^{N \OO_K}$ corresponds to $\tilde{A}$ (cf. 
\cite[p. 9]{Stevenhagen01}), the field $K(N\ff)$ corresponds to 
$\tilde{B}$ and the field $K^{N \OO_K}$ corresponds to $\tilde{C}$, so showing that $K(\ff)^{N \OO_K} = K^{N \OO_K}K(N\ff)$ is equivalent to showing that
\[ \tilde{A} = \tilde{B} \cap \tilde{C}. \]
Step 1: We show that $A = B \cap C$.  Writing $A_p$, $B_p$ and $C_p$ for the components of $p$ of each of these groups, it is enough to show that 
\[ A_p = B_p \cap C_p \text{ for all primes } p. \]
Case 1: Suppose $p \nmid N$.  Then 
\[ A_p = \OO_p^{\times}, \]
\[ B_p = (\OO')_p^{\times} = A_p, \]
\[ C_p = (\OO_K)_p^{\times}, \]
so $C_p \supset A_p = B_p$ and thus $B_p \cap C_p = A_p$.  \\
Case 2: Suppose $p \mid N$.  Write $\OO_K = \Z 1 + \Z \tau_K$, so $\OO = \Z 1 + \Z \ff \tau_K$.  We have
\[ A_p = 1 + N\OO_p = 1 + N \Z_p 1 + N \ff \Z_p \tau_K, \]
\[ B_p = (1 + \Z_p 1 + N \ff  \Z_p \tau_K)^{\times}, \]
\[ C_p = 1 + N (\OO_K)_p = 1 + N \Z_p 1 + N \Z_p \tau_K, \]
so indeed we have $B_p \cap C_p = A_p$.  \\ \indent 
It follows that $\tilde{B} \cap \tilde{C} = B \OO_K^{\times} \cap C \OO_K^{\times} \supset A \OO_K^{\times} = \tilde{A}$, so it remains to show that $\tilde{B} \cap \tilde{C} \subset \tilde{A}$.  \\
Step 2: Suppose $\Delta_K < -4$, so $\OO_K^{\times} = \{ \pm 1\}$.  Then $\tilde{B} = B$, so if $z \in \tilde{B} \cap \tilde{C}$, 
then there is $\epsilon \in \{ \pm 1\}$ such that $z \in B$, $-z \in B$ and $\epsilon z \in C$, so $\epsilon z \in B \cap C = A$ and 
thus $z \in \tilde{A}$.  \\
Step 3: Suppose $K = \Q(\sqrt{-1})$ and let $\zeta$ be a primitive $4$th root of unity, so $\OO_K = \Z 1 + \Z \zeta$ and $\OO_K^{\times} = \{1,\zeta,\zeta^2,\zeta^3\}$ .  Suppose $z \in \tilde{B} \cap \tilde{C}$.  Then there are $i,j \in \{0,1,2,3\}$, $b \in B$ and 
$c \in C$ such that 
\[ z = \zeta^i b = \zeta^j c. \]
We have $z \in \tilde{A}$ iff $\zeta^{-j} z \in \tilde{A}$, so we may assume that $j = 0$.   If $i$ is even we may argue as in Step 2, 
so assume that $i \in \{1,3\}$, and thus we have either $\zeta b = c$ or $\zeta c = b$.  But we claim that there are no such elements 
$b$ and $c$, which will complete the argument in this case. Indeed, choose a prime $p$ dividing $N$, and let $b_p$ and $c_p$ be the components at $p$.  There is a reduction map 
\[ (\OO_K)_p \ra \OO_K \otimes \Z/p\Z = \Z/p\Z 1 + \Z/p\Z \zeta. \]
Under this map, every element of $B_p \cup C_p$ lands in $\Z/p\Z 1$, so $b_p,c_p \in \Z/p\Z 1$ while 
$\zeta b_p, \zeta c_p \in \Z/p\Z \zeta$.  Thus we cannot have $\zeta b_p = c_p$ or $\zeta c_p = b_p$.  \\ \indent
If $K = \Q(\sqrt{-3})$, then we let $\zeta$ be a primitive $6$th root of unity, so $\OO_K = \Z 1 + \Z \zeta$ and $\OO_K^{\times} = 
\{1,\zeta,\zeta^2,\zeta^3,\zeta^4,\zeta^5\}$, and the argument is very similar: we cannot have $\pm \zeta b_p = c_p$ 
or $\pm b_p = \zeta c_p$.

\subsection{Proof of Theorem \ref{MAINTHM}} By Theorems \ref{BCS3.16THM} and \ref{PARISHTHM}a) and $\S 5.1$, we have 
\[ K(\ff)(\hh(E[N])) \supset K^{N \OO_K}K(N\ff) = K(\ff)^{N \OO_K}. \]
For any $\OO$-CM elliptic curve $E_{/K(\ff)}$, the splitting field $\overline{K(\ff)}^{\Ker \overline{\rho_N}}$ of the reduced mod $N$ 
Galois representation $\overline{\rho_N}$ on $E$ (cf. \S 1.3) is $K(\ff)(\hh(E[N]))$, so 
 \[[K(\ff)(\mathfrak{h}(E[N])):K(\ff)] \leq \#\overline{C_N(\OO)}. \] 
As described in the introduction, it is immediate from (\ref{CLASSICALCFT}) and the definition of $K(\ff)^{N \OO_K}$ that 
\[ \Aut(H_{N,\OO}/K(\ff)) = \overline{C_N(\OO)},\]
and thus it follows that 
\[ K(\ff)(\hh(E[N])) = K^{N \OO_K}K(N \ff). \]

\subsection{Proof of Corollaries \ref{NEWCOR3}, \ref{COR1.2} and \ref{LargeTwistCor}}

\subsubsection{Proof of Corollary \ref{NEWCOR3}}
Theorem \ref{STEVENHAGEN} implies
that for any number field $F \supset K(j(E))$ and $N \in \Z^+$, we have 
\[ [\overline{C_N(\OO)}:\overline{\rho_N}(\gg_F)] \mid [F:K(j(E))] \]
and thus 
\[ [C_N(\OO):\rho_N(\gg_F)] \mid \# \OO^{\times} [F:K(j(E))] \leq 6 [F:K(j(E))]. \]

\subsubsection{Proof of Corollary \ref{COR1.2}} We may of course assume that $N \geq 2$.  Let $w = \# q_N(\OO^{\times})$, 
so \[w = \begin{cases} \# \OO^{\times} & N \geq 3 \\ \frac{\# \OO^{\times}}{2} & N = 2 \end{cases}.\]  
Once again we denote by $\mu_w$ the image of $\OO^{\times}$ in $C_N(\OO)$, a cyclic group of 
order $w$.   Let $E_{/K(\ff)}$ be any $\OO$-CM elliptic curve.  We may view $G = \Aut(K(\ff)(E[N])/K(\ff))$ as a subgroup of $C_N(\OO)$.  Let $H = G \cap \mu_w$ and $L =  (K(\ff)(E[N]))^H$, so a suitable twist $(E')_{/L}$ of $E_{/L}$ has trivial mod $N$ Galois representation.   As shown in the 
proof of Theorem \ref{WFP}, we have $L = K(\ff)(\hh(E[N]))$, so by Theorem \ref{MAINTHM} we have $[L:K(\ff)] = \# \overline{C_N(\OO)}$.  

\subsubsection{Proof of Corollary \ref{LargeTwistCor}}
We may assume that $N \geq 2$.  Let $q_N \colon \OO^{\times} \ra C_N(\OO)$ be the natural homomorphism.  By Theorem \ref{WFP}b), there is an elliptic curve $E_{/K(\ff)}$ 
such that \[[K(\ff)(E[N]):K(\ff)(\hh(E[N]))] = \# q_N(\OO^{\times}) = \begin{cases} \# \OO^{\times} & N \geq 3 \\
\frac{\# \OO^{\times}}{2} & N = 2 \end{cases}. \] By Theorem \ref{MAINTHM} we have $[K(\ff)(\hh(E[N])):K(\ff)] = \# \overline{C_N(\OO)}$.  Thus $\rho_{E,N}(\gg_{K(\ff)}) = C_N(\OO)$.  

\section{Applications}

\subsection{Divisibility in Silverberg's Theorem}

\begin{lemma}
\label{GTLEMMA}
Let $J,M$ be subgroups of a group $G$.  If $M$ is normal and $J \cap M = \{1\}$, then 
 $\# J \mid [G:M]$.
\end{lemma}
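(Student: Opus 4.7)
The plan is to use the normality of $H$ to pass to the quotient $G/H$, embed $K$ there, and invoke Lagrange. Concretely, since $H \trianglelefteq G$, the quotient $G/H$ is a group of order $[G:H]$, and we have a well-defined composition
\[
\varphi: K \hookrightarrow G \twoheadrightarrow G/H, \qquad k \mapsto kH.
\]
This $\varphi$ is a group homomorphism whose kernel is precisely $K \cap H$.

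By hypothesis $K \cap H = \{1\}$, so $\varphi$ is injective. Thus $K$ is isomorphic to a subgroup of $G/H$. Assuming the divisibility statement $\#K \mid [G:H]$ is to be interpreted in the usual way (finite cardinalities, or with the convention that any cardinal divides infinity), Lagrange's theorem applied to the image $\varphi(K) \leq G/H$ gives $\#K = \#\varphi(K) \mid \#(G/H) = [G:H]$.

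There is essentially no obstacle: the lemma is a direct application of the first isomorphism theorem together with Lagrange, and the only thing worth flagging is that the displayed divisibility $\#K \mid [G:H]$ presupposes enough finiteness (for instance $K$ finite, which will be the case in every intended application, since in the sequel $K$ will be a finite subgroup of a Galois group or of a mod $N$ Cartan subgroup) for the statement to be meaningful.
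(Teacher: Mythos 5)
Your proof is correct and is essentially the same one the paper gives: embed $K$ into $G/H$ via the composite $K \hookrightarrow G \twoheadrightarrow G/H$, which is injective because its kernel is $H \cap K = \{1\}$, then apply Lagrange. No substantive difference.
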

\begin{proof}
The composite homomorphism $J \hookrightarrow G \ra G/M$ is an injection.
\end{proof}
\noindent
The following result extends \cite[Cor. 2.5]{BCP} from maximal orders to all imaginary quadratic orders, thereby confirming 
the expectation expressed in \cite[Remarks 2.2]{BCP}.

\begin{thm}
\label{SPY}
Let $\OO$ be an order in an imaginary quadratic field $K$, and let $E$ be an $\OO$-CM elliptic curve defined over a number field $F \supset K$.  
If $E(F)$ has a point of order $N \in \Z^+$, then 
\[  \varphi(N) \mid \frac{ \# \OO^{\times}}{2} \frac{[F:\Q]}{\# \Pic \OO}. \]
\end{thm}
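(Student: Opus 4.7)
The plan is to deduce Theorem \ref{SPY} in two short steps, combining the Uniform Open Image Theorem (Corollary \ref{NEWCOR3}) with the elementary Lemma \ref{GTLEMMA} applied to the scalar subgroup of the mod $N$ Cartan. The heavy lifting has already been done in Theorem \ref{MAINTHM}; what remains is an almost formal manipulation.

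First, suppose $P \in E(F)$ has order $N$. Because $\rho_N$ lands in $C_N(\OO) = (\OO/N\OO)^{\times}$ acting naturally on $E[N] \cong \OO/N\OO$, the $F$-rationality of $P$ gives
\[ \rho_N(\gg_F) \subseteq \Stab_{C_N(\OO)}(P), \]
and hence, passing to indices,
\[ [C_N(\OO) : \Stab_{C_N(\OO)}(P)] \mid [C_N(\OO) : \rho_N(\gg_F)]. \]
By Corollary \ref{NEWCOR3}, the right-hand side divides $\# \OO^{\times}\,[F:K(j(E))]$, so it is enough to show $\varphi(N) \mid [C_N(\OO) : \Stab_{C_N(\OO)}(P)]$.

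For the latter I would invoke Lemma \ref{GTLEMMA} with $G = C_N(\OO)$ (abelian, hence every subgroup is normal), $H = \Stab_{C_N(\OO)}(P)$, and $K$ the image of the scalar embedding $(\Z/N\Z)^{\times} \hookrightarrow C_N(\OO)$ coming from $\Z/N\Z \hookrightarrow \OO/N\OO$ (the embedding is injective because $\Z \cap N\OO = N\Z$). The only small point to check is $H \cap K = \{1\}$: if a scalar $\lambda \in (\Z/N\Z)^{\times}$ fixes $P$, then $(\lambda-1)P = 0$ in $\OO/N\OO$, and since $P$ has exact additive order $N$ this forces $\lambda \equiv 1 \pmod N$. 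Lemma \ref{GTLEMMA} then gives $\varphi(N) = \# K \mid [G:H]$, as required.

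To conclude I would just translate $[F:K(j(E))]$ into the shape in the statement, using $K(j(E)) = K(\ff)$, $[K(\ff):K] = \# \Pic \OO$ and $[F:\Q] = 2[F:K]$ to get
\[ \varphi(N) \,\Big|\, \# \OO^{\times}\,[F:K(j(E))] \;=\; \frac{\# \OO^{\times}}{2} \cdot \frac{[F:\Q]}{\# \Pic \OO}. \]
I do not expect a real obstacle: the argument is essentially the scalar-subgroup trick, and it improves \cite[Cor. 2.5]{BCP} from the maximal-order setting to arbitrary orders simply by invoking Corollary \ref{NEWCOR3} in place of its $\OO_K$-only predecessor. If anything is slightly delicate, it is just verifying the two group-theoretic facts used above (injectivity of the scalar embedding, and triviality of its intersection with the stabilizer), both of which follow directly from the hypothesis that $P$ has exact order $N$.
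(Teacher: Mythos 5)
Your proof is correct and takes essentially the same approach as the paper: both reduce to Corollary \ref{NEWCOR3} (the Uniform Open Image Theorem) together with Lemma \ref{GTLEMMA} applied to the scalar subgroup $(\Z/N\Z)^{\times} \hookrightarrow C_N(\OO)$, then translate $[F:K(j(E))]$ via $K(j(E)) = K(\ff)$, $[K(\ff):K] = \# \Pic \OO$, $[F:\Q] = 2[F:K]$. The only cosmetic difference is that you interpose the stabilizer $\operatorname{Stab}_{C_N(\OO)}(P)$ and apply Lemma \ref{GTLEMMA} to it, whereas the paper applies the lemma directly to the smaller subgroup $\rho_N(\gg_F)$, noting it meets the scalars trivially; the two routes are logically equivalent.
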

\begin{proof}
Let $\mathcal{I}_N = [C_N(\OO):\rho_N(\gg_F)]$ be the index of the mod $N$ Galois representation in the Cartan subgroup.  By Corollary \ref{COR1.4} we have
\[ \mathcal{I}_N \mid \# \OO^{\times} [F:K(j(E))] = 
\frac{\# \OO^{\times}}{2} \frac{ [F:\Q]}{ \# \Pic \OO}. \]  Since there is a rational point of order $N$, the subgroup $\rho_N(\gg_F)$ contains no scalar
 matrices other than the identity.  Appying Lemma \ref{GTLEMMA} with $G = C_N(\OO)$, $M = \rho_N(\gg_F)$ and $J$ the subgroup of 
scalar matrices, we get $\varphi(N) \mid  \mathcal{I}_N$, and we are done.
\end{proof}

\subsection{A Theorem of Franz}
Let $\OO$ be an order in $K$, of conductor $\ff$, and let $E_{/K(\ff)}$ be an $\OO$-CM elliptic curve.  Choose a 
field embedding $K(\ff) \hookrightarrow \C$ such that $j(E) = j(\C/\OO)$ and an isomorphism $E(\C) \stackrel{\sim}{\ra} \C/\OO$.  This induces an isomorphism $E(\overline{K(\ff)})[\tors] \stackrel{\sim}{\ra} \C/\OO[\tors]$, which we use to view (the image in $\C/\OO$ of) 
$\tau_K = \frac{\Delta_K + \sqrt{\Delta_K}}{2}$  as a point of $E(\overline{K(\ff)})[\tors]$ of order $\ff$.

 \begin{thm}(Franz \cite{Franz35})
With notation as above, we have 
\[ K(\ff)(\hh(\tau_K)) = K^{(\ff)}. \]
 \end{thm}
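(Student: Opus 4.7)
My plan is to relate $K(\ff)(\hh(\tau_K))$ to the Weber function field of the auxiliary $\OO_K$-CM elliptic curve $E_1 = \C/\OO_K$ obtained from $E$ via the canonical degree-$\ff$ isogeny, and then invoke the First Main Theorem of Complex Multiplication (Theorem \ref{FIRSTCM}) applied to $E_1$. The case $\ff = 1$ is trivial ($\tau_K \equiv 0 \in E$, and $K^{(\ff)} = K^{(1)} = K(\ff)$), so henceforth assume $\ff \geq 2$ and set $F := K(\ff)$.

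The argument proceeds in five steps. First, using $\OO = \Z + \ff\Z\tau_K$ and $\tau_K^2 = \Delta_K\tau_K + \frac{\Delta_K - \Delta_K^2}{4}$ (integral since $\Delta_K \equiv 0,1 \pmod{4}$), a direct calculation gives $\ann_\OO(\tau_K) = \ff\OO_K$, so $\tau_K$ has order $\ff$ in $E$ and $\langle\tau_K\rangle = \ker\iota$, where $\iota\colon E \to E_1$ is the $F$-rational quotient isogeny, and its dual $\iota^\vee\colon z + \OO_K \mapsto \ff z + \OO$ sends $P_1 := \tau_K/\ff + \OO_K$ to $\tau_K$. Second, in the basis $(e_1, e_2) = (1/\ff + \OO,\, \tau_K + \OO)$ of $E[\ff]$, a direct computation shows that multiplication by $\alpha = a_0 + a_1\ff\tau_K \in \OO$ acts by $\bigl(\begin{smallmatrix}a_0 & 0 \\ a_1 & a_0\end{smallmatrix}\bigr)$; in particular $\sigma(\tau_K) = \chi(\sigma)\tau_K$ for $\sigma \in \gg_F$, where $\chi\colon \gg_F \to (\Z/\ff\Z)^\times$ is the composition of $\rho_\ff$ with the ring homomorphism $C_\ff(\OO) \to (\Z/\ff\Z)^\times$ given by $a_0 + a_1\ff\tau_K \mapsto a_0$. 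Third, in the basis $(P_0, P_1) = (1/\ff + \OO_K,\, \tau_K/\ff + \OO_K)$ of $E_1[\ff]$, we have $\iota(e_1) = P_0$ and $\iota(e_2) = 0$, so $\sigma(P_0) = \iota(\sigma(e_1)) = a_0(\sigma)P_0$; since any $\gamma = c + d\tau_K$ in $C_\ff(\OO_K)$ acts on $P_0$ by $cP_0 + dP_1$, the constraint $(c, d) = (a_0, 0)$ forces $\gamma_\sigma = a_0(\sigma) I$ to be a scalar matrix. Hence $\gg_F$ acts on $E_1[\ff]$ through scalars in $C_\ff(\OO_K)$, with scalar exactly $\chi(\sigma)$. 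Fourth, for $\ff \geq 2$ no element of $\OO_K^\times \setminus \{\pm 1\}$ is congruent to a rational integer modulo $\ff\OO_K$, so the stabilizer of $\hh(\tau_K)$ in $\gg_F$ (namely $\{\sigma : \chi(\sigma) \in \{\pm 1\}\}$) equals the stabilizer of $F(\hh(E_1[\ff]))$ in $\gg_F$ (namely $\{\sigma : \sigma$ acts on $E_1[\ff]$ by an element of $q_\ff(\OO_K^\times)\} = \{\sigma : \chi(\sigma) \in \{\pm 1\}\}$), yielding $F(\hh(\tau_K)) = F(\hh(E_1[\ff]))$. Fifth, Theorem \ref{FIRSTCM} applied to $E_1$ gives $K^{(1)}(\hh(E_1[\ff])) = K^{(\ff\OO_K)} = K^{(\ff)}$; base-changing to $F$ (using the standard containment $K(\ff) \subset K^{(\ff)}$) yields $F(\hh(E_1[\ff])) = K^{(\ff)}$, completing the proof.

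The main obstacle is the third step: showing that for the $F$-model of $E_1$ obtained from $E$ via the isogeny, the mod-$\ff$ Galois image lies in the scalar subgroup of $C_\ff(\OO_K)$. A priori this image could be much larger---the First Main Theorem of CM gives the reduced representation of a generic $\OO_K$-CM curve over $K^{(1)}$ as the full $\overline{C_\ff(\OO_K)}$---but the compatibility constraint from $\iota$, namely that $\sigma(P_0)$ is controlled by $\sigma(e_1)$ while $\iota(e_2) = 0$ kills the $\tau_K$-direction, forces the scalar form and thereby identifies $\chi$ as the character cutting out $K^{(\ff)}$ inside $F(E_1[\ff])$.
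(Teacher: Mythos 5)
Your proof is correct, and it takes a genuinely different route from the paper's.

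Both proofs start from the same basis $(e_1,e_2)=(1/\ff+\OO,\tau_K+\OO)$ of $E[\ff]$ and the same canonical $F$-rational isogeny $\iota\colon E\to E_1=\C/\OO_K$ with $\langle e_2\rangle=\ker\iota$, and both invoke Theorem \ref{FIRSTCM} applied to $E_1$. The paper's proof then sets $L=K(\ff)(\hh(e_2))$ and proves $L=K^{(\ff)}$ by a two-sided comparison: an upper bound $[L:K(\ff)]\mid\varphi(\ff)/2$ coming from the reduced isogeny character $\Psi^{\pm}$, and a lower bound $K^{(\ff)}\subset L$ obtained by twisting $E$ over $L$ to make $e_2$ rational, using the proof of Theorem \ref{ITT} to see that $\iota(e_1)$ then generates $E'[\ff]$ as an $\OO_K$-module over $L$, and invoking Theorem \ref{FIRSTCM}. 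Your proof instead computes the mod-$\ff$ Galois representation of the \emph{specific} model $E_1$ coming from $\iota$: since $\iota(e_1)=P_0$, $\iota(e_2)=0$, and $\sigma$ acts on $E[\ff]$ by a lower-triangular Cartan matrix with diagonal $a_0(\sigma)$, the equivariance $\sigma(P_0)=a_0(\sigma)P_0$ together with the basis $(P_0,P_1)$ of $E_1[\ff]$ forces the image in $C_\ff(\OO_K)$ to be the scalar $a_0(\sigma)=\chi(\sigma)$. You then match stabilizers: using $\ann_\OO(\tau_K)=\ff\OO_K$ and the observation that for $\ff\geq 2$ no unit of $\OO_K$ outside $\{\pm1\}$ is a rational integer mod $\ff\OO_K$, both $K(\ff)(\hh_E(\tau_K))$ and $K(\ff)(\hh_{E_1}(E_1[\ff]))$ are the fixed field of $\{\sigma:\chi(\sigma)\in\{\pm1\}\}$, so they coincide; Theorem \ref{FIRSTCM} then identifies this common field as $K^{(\ff)}$. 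Your approach replaces the twist-and-degree-count with an explicit computation of the scalar action (in the spirit of Proposition \ref{WEAKERPROP}, but valid here for any $\Delta_K$ since the target is the isogeny image rather than an abstract twist), which is somewhat more direct and makes the underlying mechanism more transparent. Both arguments are valid; the paper's version is more self-contained in its reliance on the already-developed twisting formalism, whereas yours gives a cleaner identification of the two Weber function fields.
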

 
 \begin{proof}
As in the proof of Theorem \ref{ITT}, over $\C$ we may view the canonical isogeny as $\iota \colon \mathbb{C}/\O \rightarrow \mathbb{C}/\Ok$. We take $e_1=\frac{1}{\ff}+ \O$ and $e_2=\tau_K+\O$ as a basis for $E[\ff]$. Then $e_2$ generates $\ker(\iota)$, a $K(\ff)$-rational cyclic subgroup of order $\ff$, and with respect to $\{e_1,e_2\}$ the image of the mod $\ff$ Galois representation associated to $E_{/K(\ff)}$ consists of matrices of 
the form 
\[
\left[ \begin{array}{cc} a & b \ff^2 \frac{\Delta_K-\Delta_K^2}{4} \\ b & a + b \ff \Delta_K \end{array} \right] \text{with } a,b \in \Z/\ff\Z.
\]
Viewing entries mod $\ff$, we see there is a character $\Psi \colon \mathfrak{g}_{K(\ff)} \rightarrow (\Z/\ff\Z)^{\times}$ such that  
\[  \rho_{E,\ff}(\sigma) =\left[ \begin{array}{cc} \Psi(\sigma) & 0 \\ * & \Psi(\sigma)  \end{array} \right]. \] 
If $\ff \leq 2$, then $K(\ff)(\mathfrak{h}(\tau_K)) = K(\ff) = K^{(\ff)}$ and the result holds.   Thus we may assume $\ff \geq 3$. Let $L\coloneqq K(\ff)(\hh(e_2))$. Since $\ff \geq 3$, we have $j(E) \neq 0, 1728$, so $[L(e_2):L]$ divides 2 and the restriction $\Psi |_{\mathfrak{g}_L} \colon \mathfrak{g}_L \rightarrow \{\pm 1\}$ defines a quadratic character $\chi$. On the twist $E^{\chi}$ of $E_{/L}$ the point $e_2$ becomes $L$-rational. As in the proof of Theorem 5.5 of \cite{BCS},  let $\Psi^{\pm} \colon \mathfrak{g}_{K(\ff)} \rightarrow (\Z/\ff\Z^{\times})/\{\pm 1\}$ denote the composition of $\Psi$ with the natural map $(\Z/\ff\Z)^{\times} \ra (\Z/\ff\Z)^{\times}/{\pm 1}$. Then $L \subset (\overline{K(\ff)})^{\ker \Psi^{\pm}}$, so $[L:K(\ff)] \mid \frac{\varphi(\ff)}{2}$.  If $\iota \colon E^{\chi} \rightarrow E'$ is the canonical isogeny, then the proof of Theorem \ref{ITT} shows that $\iota(e_1)$ is an element of $E'(L)$ which generates $E'[\ff]$ as an $\Ok$-module. Thus $E'$ has full $\ff$-torsion over $L$, so by Theorem \ref{FIRSTCM}, $K^{(\ff)} \subset L$.  So
\[ [L:K(\ff)] \geq [K^{(\ff)}:K(\ff)] = \frac{ \varphi(\ff)}{2} \geq [L:K(\ff)], \]
and thus $K(\ff)(\hh(e_2)) = L =  K^{(\ff)}$.
 \end{proof}
 
 \subsection{The Field of Moduli of a Point of Prime Order}
Let $K \neq \Q(\sqrt{-1}), \Q(\sqrt{-3})$ be an imaginary quadratic field, and let $\OO \subset K$ be the order of conductor $\ff$. Here we use Theorem \ref{ITT} to determine the smallest field $F \supset K$ for which there exists an $\OO$-CM elliptic curve $E_{/F}$ with an $F$-rational point of order $\ell>2$.
\begin{lemma}
\label{INTLEM}
Let $K$ be an imaginary quadratic field, let $\ff \in \Z^+$, and let $\ell>2$ be prime. Then $K^{(\ell)} \cap K(\ell \ff)=K(\ell)$.
\end{lemma}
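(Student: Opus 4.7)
The plan is to apply class field theory via (\ref{CLASSICALCFT}) with $\OO = \OO_K$, identifying the three fields $K^{(\ell)}, K(\ell\ff), K(\ell)$ with open subgroups of $\widehat{\OO_K}^{\times}/\OO_K^{\times}$, and then verifying the resulting subgroup identity locally at each rational prime. The easy containment $K(\ell) \subseteq K^{(\ell)} \cap K(\ell\ff)$ is standard: $K(\ell) \subseteq K^{(\ell)}$ because the ring class field of an order sits inside the ray class field of the same conductor, and $K(\ell) \subseteq K(\ell\ff)$ since $\ell \mid \ell\ff$.

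For the reverse containment, set
\[
B = \widehat{\OO_{\ell\ff}}^{\times}, \qquad C = \{x \in \widehat{\OO_K}^{\times} : x \equiv 1 \pmod \ell\}, \qquad D = \widehat{\OO_{\ell}}^{\times},
\]
where $\OO_{\ell\ff}$ and $\OO_{\ell}$ denote the orders in $K$ of the indicated conductors. Under the Galois correspondence, $K(\ell\ff), K^{(\ell)}, K(\ell)$ are fixed respectively by $B\OO_K^{\times}, C\OO_K^{\times}, D\OO_K^{\times}$. Since intersection of subfields of $K^{\ab}$ corresponds to product of their fixing subgroups, it suffices to establish the local identity $B_p C_p = D_p$ at every rational prime $p$ in the decomposition $\widehat{\OO_K}^{\times} = \prod_p (\OO_K)_p^{\times}$; this gives $BC = D$, hence $BC\OO_K^{\times} = D\OO_K^{\times}$, as required.

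At primes $p \neq \ell$, no congruence is imposed at $p$ by $C$, so $C_p = (\OO_K)_p^{\times}$; also $D_p = (\OO_K)_p^{\times}$ (as $\OO_{\ell}$ has conductor coprime to $p$) and $B_p \subseteq (\OO_K)_p^{\times}$, giving $B_p C_p = (\OO_K)_p^{\times} = D_p$. The substantive case is $p = \ell$: writing $(\OO_{\ell})_{\ell} = \Z_{\ell} + \ell(\OO_K)_{\ell}$ and $(\OO_{\ell\ff})_{\ell} = \Z_{\ell} + \ell^{1+v_{\ell}(\ff)}(\OO_K)_{\ell}$, any unit $u \in D_{\ell}$ has the form $u = a + \ell\beta$ with $a \in \Z_{\ell}^{\times}$ and $\beta \in (\OO_K)_{\ell}$; the factorization $u = a \cdot (1 + \ell\beta/a)$ then exhibits $u$ as a product of $a \in \Z_{\ell}^{\times} \subseteq B_{\ell}$ and $1 + \ell\beta/a \in 1 + \ell(\OO_K)_{\ell} = C_{\ell}$. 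The reverse containments $B_{\ell}, C_{\ell} \subseteq D_{\ell}$ are immediate, so $B_{\ell} C_{\ell} = D_{\ell}$, completing the proof. The only genuinely nontrivial step is the local factorization at $p = \ell$, and the whole argument is uniform in $K$ and makes no essential use of the hypothesis $\ell > 2$.
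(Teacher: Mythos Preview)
Your proof is correct, but it takes a different route from the paper's.  The paper proves Lemma~\ref{INTLEM} by a degree count: using Theorem~\ref{MAINTHM} it computes $[K^{(\ell)}K(\ell\ff):K(\ff)]=\#C_\ell(\OO)/2$, divides by $[K(\ell\ff):K(\ff)]=\ell-\bigl(\tfrac{\Delta}{\ell}\bigr)$ (from Theorem~\ref{COX7.28THM}) to get $[K^{(\ell)}:K^{(\ell)}\cap K(\ell\ff)]=\tfrac{\ell-1}{2}$, and then observes that this equals $[K^{(\ell)}:K(\ell)]$.  Your argument instead goes back to the idelic description and verifies the subgroup identity $B_p C_p = D_p$ prime by prime, with the only nontrivial case being the factorization $u = a\cdot(1+\ell\beta/a)$ at $p=\ell$; this is exactly the style of computation the paper carries out in \S5.1 for the related identity~(\ref{KEYCFT}), but applied here to a different triple of class fields.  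Your approach has the advantage of being independent of Theorem~\ref{MAINTHM} and, as you note, of the hypothesis $\ell>2$; the paper's approach is shorter given that Theorem~\ref{MAINTHM} is already in hand and that the lemma is only invoked for odd $\ell$.
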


\begin{proof}
Let $\Delta = \ff^2 \Delta_K$.  The statement is immediate if $\ff=1$, so suppose $\ff>1$. By Theorem \ref{COX7.28THM}, 
\[[K(\ell \ff):K(\ff)]= \ell - \left(\frac{\Delta}{\ell}\right). \]

Since $[K^{(\ell)}K(\ell \ff):K(\ff)]=\#C_{\ell}(\OO)/2$ by Theorem \ref{MAINTHM}, we have in both cases that 
\[
[K^{(\ell)}K(\ell \ff):K(\ell \ff)]=\frac{\#C_{\ell}(\OO)}{2[K(\ell \ff):K(\ff)]}=\frac{1}{2}(\ell-1).
\]
Thus $[K^{(\ell)}:K^{(\ell)} \cap K(\ell\ff)]=[K^{(\ell)}K(\ell \ff):K(\ell \ff)]=\frac{1}{2}(\ell-1)$. As we have $K(\ell) \subset K^{(\ell)} \cap K(\ell\ff)$ and $[K^{(\ell)}:K(\ell)]=\frac{1}{2}(\ell-1)$, the result follows.
\end{proof}

\begin{thm}
\label{PRIMEMODULITHM}
 Let $K \neq \Q(\sqrt{-1}), \Q(\sqrt{-3})$ be an imaginary quadratic field, and let $\OO$ be the order of conductor $\ff$ in $K$.
Let $F \supset K$. \\
a) Let $E_{/F}$ be an $\OO$-CM elliptic curve such that $E(F)$ contains a point of prime order $\ell>2$.  Then there is a prime $\pp$ of $\OO_K$ lying over $\ell$ such that $K(\ff)K^{\mathfrak{p}} \subset F$. \\
b) If $\leg{\Delta}{\ell} \neq -1$, then there is a prime $\pp$ of $\OO_K$ lying over $\ell$ and an $\OO$-CM elliptic curve $E_{/K(\ff)K^{\mathfrak{p}} }$ such that $E(K(\ff)K^{\mathfrak{p}})$ has a point of order $\ell$.
\end{thm}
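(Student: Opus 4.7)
The plan for (a) is to reduce to the $\OO_K$-CM case via the Isogeny Torsion Theorem and then invoke the First Main Theorem. Since $K(j(E))=K(\ff)$ and $F\supset K$, I automatically have $K(\ff)\subset F$. Applying Theorem \ref{ITT} with $\ff'=1$ yields an $F$-rational isogeny $E\to E_1$ to an $\OO_K$-CM curve $E_1/F$ together with the torsion divisibility $\#E(F)[\tors]\mid \#E_1(F)[\tors]$; in particular $E_1(F)$ contains a point $Q$ of order $\ell$. Because $F\supset K$, the full $\OO_K$-action on $E_1$ is $F$-rational, so $\OO_K\cdot Q\subset E_1(F)$. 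Theorem \ref{Thm2.9} identifies $\OO_K\cdot Q=E_1[\ann Q]$, and the inclusions $(\ell)\subset \ann Q\subsetneq \OO_K$ force any prime $\pp$ of $\OO_K$ dividing $\ann Q$ to lie over $\ell$. Fixing such $\pp$, I get $E_1[\pp]\subset E_1[\ann Q]\subset E_1(F)$, so Theorem \ref{FIRSTCM} gives $K^\pp=K^{(1)}(\hh(E_1[\pp]))\subset F$ and hence $K(\ff)K^\pp\subset F$.

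For (b) the strategy is first to build an $\OO_K$-CM curve $E_1$ over $K(\ff)K^\pp$ with $E_1[\pp]$ rational, and then transport this torsion to an $\OO$-CM curve $E$ by the dual isogeny. I would select $\pp\mid \ell$ in $\OO_K$ of norm $\ell$ when $\leg{\Delta_K}{\ell}\neq -1$, and $\pp=(\ell)$ otherwise (the hypothesis $\leg{\Delta}{\ell}\neq -1$ then forces $\ell\mid \ff$). For any $\OO_K$-CM curve $E_1/K(\ff)K^\pp$, Theorem \ref{FIRSTCM} gives $\hh(E_1[\pp])\subset K^\pp$, so $\gg_{K(\ff)K^\pp}$ acts on $E_1[\pp]$ through $\OO_K^\times=\{\pm 1\}$ (using $K\neq \Q(\sqrt{-1}),\Q(\sqrt{-3})$); a single quadratic twist then puts $E_1[\pp]$ inside $E_1(K(\ff)K^\pp)$.

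Next I would take any $\OO$-CM curve $E_0/K(\ff)K^\pp$ (possible since $j\in K(\ff)$) and apply \cite[Prop.~2.2]{BP16} to obtain an $F$-rational isogeny $E_0\to (E_1)_0$ with $(E_1)_0$ $\OO_K$-CM. Since $(E_1)_0$ and the preferred $E_1$ differ by a quadratic character, twisting $E_0$ by that character produces $E/F$ with an $F$-rational isogeny to $E_1$ and hence an $F$-rational dual $\iota^\vee:E_1\to E$. Analytically $\iota^\vee:\C/\OO_K\to \C/\OO$ is multiplication by $\ff$, so $\ker \iota^\vee\cap E_1[\ell]=\langle \ell^{-1}+\OO_K\rangle$, which is trivial if $\ell\nmid \ff$ and cyclic of order $\ell$ if $\ell\mid \ff$. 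A short fractional-ideal calculation (using $\pp\bar\pp=(\ell)$ in the split case and $\pp^2=(\ell)$ in the ramified case) confirms $\ell^{-1}+\OO_K\notin E_1[\pp]$ whenever $|\OO_K/\pp|=\ell$; and in the inert case $\iota^\vee(E_1[\ell])$ is automatically cyclic of order $\ell$. Either way $\iota^\vee(E_1[\pp])\subset E(F)$ has order $\ell$, producing the desired $F$-rational point.

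The trickiest step will be the twist-matching in the transport stage: starting from an abstract $\OO_K$-CM curve with rational $\pp$-torsion, I must produce an $\OO$-CM curve over the same field related to it by an $F$-rational isogeny, keeping careful track of how a quadratic twist of the bottom curve can be realized by a compatible twist of the top curve in a way that preserves $F$-rationality of both the isogeny and its dual. The secondary technical input is the non-containment $E_1[\pp]\not\subset \ker\iota^\vee$ in the subcase $\ell\mid \ff$, which is what guarantees that the $\pp$-torsion survives transport to $E$.
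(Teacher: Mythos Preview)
Your proof of part (a) is correct and follows the same line as the paper: reduce to the $\OO_K$-CM case via Theorem~\ref{ITT}, apply Theorem~\ref{Thm2.9} to the $\OO_K$-submodule generated by an order-$\ell$ point, pick a prime $\pp\mid\ann Q$ above $\ell$, and invoke Theorem~\ref{FIRSTCM}.

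For part (b) your route is correct but genuinely different from the paper's. The paper begins with an $\OO$-CM curve over $K(\ff)$ possessing a $K(\ff)$-rational cyclic $\ell$-subgroup, applies \cite[Thm.~5.5]{BCS} to obtain a rational point of order $\ell$ over some degree-$\tfrac{\ell-1}{2}$ extension $L/K(\ff)$, then feeds this back into part (a) to get $K(\ff)K^{\pp}\subset L$, and finally proves $[K(\ff)K^{\pp}:K(\ff)]=\tfrac{\ell-1}{2}$ via a case analysis (using Lemma~\ref{INTLEM}) to force $L=K(\ff)K^{\pp}$. Your approach instead works directly over $K(\ff)K^{\pp}$: Theorem~\ref{FIRSTCM} plus a quadratic twist makes $E_1[\pp]$ rational on an $\OO_K$-CM curve, and the dual of the canonical isogeny transports this to an $\OO$-CM curve. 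This is more constructive and self-contained, avoiding both the citation to \cite[Thm.~5.5]{BCS} and the field-degree computation; the paper's argument, on the other hand, gets extra mileage out of part (a) and identifies $K(\ff)K^{\pp}$ as the minimal field by a squeeze argument rather than by explicit construction. Two small points of care in your argument: first, you should build $(E_1)_0$ from $E_0$ \emph{before} selecting $E_1$, so that $E_1$ and $(E_1)_0$ are guaranteed to share a $j$-invariant and hence genuinely differ by a quadratic character (as written, ``the preferred $E_1$'' could have a different $j$); second, the twist-matching works because $j\neq 0,1728$ (since $K\neq\Q(\sqrt{-1}),\Q(\sqrt{-3})$) makes quadratic twisting functorial on isogenies, so twisting source and target by the same $\chi$ keeps the isogeny $F$-rational.
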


\noindent If $\leg{\Delta}{\ell} = -1$, then an $\OO$-CM elliptic curve $E_{/F}$ with an $F$-rational point of order $\ell$ must have full $\ell$-torsion (see \cite[Thm. 4.8]{BCS} or Lemma \ref{NEWLEMMA2}). In this case, $K(\ell\ff)K^{(\ell)}\subset F$ by Theorem \ref{MAINTHM}. The existence of an elliptic curve $E_{/K(\ell\ff)K^{(\ell)}}$ with full $\ell$-torsion is guaranteed by Corollary 
\ref{1.1C}.

\begin{proof}
a) Let $F \supset K$ and $E_{/F}$ be an $\OO$-CM elliptic curve with an $F$-rational point of order $\ell$.  By Theorem \ref{ITT}, there is an $\Ok$-CM elliptic curve $E'_{/F}$ with an $F$-rational point $P$ of order $\ell$. If $M$ is the $\Ok$-submodule of $E'(F)$ generated by $P$, then $M=E'[\ann M]$ and $\#M=|\! \ann M|$ by Theorem \ref{Thm2.9}. Since $\ell \mid \# M$, we must have $\mathfrak{p} \mid \ann M$ for some prime $\mathfrak{p}$ of $\Ok$ above $\ell$. By Theorem \ref{FIRSTCM} we have 
\[K(\ff) K^{\mathfrak{p}} \subset K(\ff) K^{\ann M}=K(j(E)) K^{(1)}(\mathfrak{h}(E'[\ann M])) \subset F. \]
b) If $\leg{\Delta}{\ell} \neq -1$, then an $\OO$-CM elliptic curve $E_{/K(\ff)}$ possesses a $K(\ff)$-rational cyclic subgroup of order $\ell$. (See e.g. \cite[p.13]{TORS1}.  This is also a special case of Theorem \ref{KJISOGTHM}.) By \cite[Thm. 5.5]{BCS}, there is an extension $L/K(\ff)$ of degree $(\ell-1)/2$ and a quadratic twist $(E_1)_{/L}$ such that $E_1(L)$ has a point of order $\ell$.  By 
part a), there is a prime $\pp$ of $\OO_K$ lying over $\ell$ such that $K(\ff)K^{\mathfrak{p}} \subset L$, so it will suffice to show that $[K(\ff)K^{\mathfrak{p}}:K(\ff)] \geq \frac{\ell-1}{2}$.
\\ \indent 
If $\ell \nmid \ff$, then primes above $\ell$ are unramified in $K(\ff)/K^{(1)}$. Thus $K(\ff)$, $K^{\mathfrak{p}}$ are linearly disjoint over $K^{(1)}$, and we have  $[K(\ff)K^{\mathfrak{p}}:K(\ff)]=[K^{\mathfrak{p}}:K^{(1)}]=\frac{1}{2}(\ell-1)$ since $\leg{\Delta_K}{\ell}= \leg{\Delta}{\ell} \neq -1$. If $\ell \mid \ff$, then applying Lemma \ref{INTLEM} with $\frac{\ff}{\ell}$ in place of $\ff$, we have \[ K^{\mathfrak{p}} \cap K(\ff) \subset K^{(\ell)} \cap K(\ff)
=K(\ell). \]  
Thus $K^{\pp} \cap K(\ff)  = K^{\pp} \cap K(\ell)$, so 
\[ [K(\ff)K^{\pp}:K(\ff)]=[K^{\pp}:K^{\pp} \cap K(\ff)]=[K^{\pp}:K^{\pp} \cap K(\ell)] = [K(\ell)K^{\pp}:K(\ell)] \]
and it is enough to show that $[K(\ell)K^{\pp}:K(\ell)] \geq \frac{\ell-1}{2}$.  
\begin{itemize}
\item $\leg{\Delta_K}{\ell} = 1$: We will prove that $K^{\pp} \cap K(\ell) =K^{(1)}$ using CM elliptic curves. Let $(E_0)_{/K^{(1)}}$ be an $\Ok$-CM elliptic curve. Then $E_0[\mathfrak{p}]$ is stable under the action of $\gg_{K^{(1)}}$ and generated by a point $P$ of order $\ell$. By \cite[Thm. 5.5]{BCS}, there is an extension $L/K^{(1)}$ of degree $(\ell-1)/2$ and a quadratic twist $(E_1)_{/L}$ such that $P$ becomes $L$-rational. By Theorem \ref{FIRSTCM} we have $K^{\mathfrak{p}} \subset L$, and $K^{\mathfrak{p}} = L$ since $[K^{\mathfrak{p}}:K^{(1)}]=\frac{1}{2}(\ell-1)$.  Over $K(\ell)K^{\mathfrak{p}}$, the curve $E_1$ has a rational point of order $\ell$, and the mod $\ell$ Galois representation is scalar by Theorem \ref{PARISHTHM}. Thus $E_1$ has full $\ell$-torsion over $K(\ell)K^{\mathfrak{p}}$, and $K^{(\ell)} \subset K(\ell)K^{\mathfrak{p}}$. This implies $\frac{1}{2} (\ell-1) \mid [K(\ell)K^{\mathfrak{p}}:K(\ell)]=[K^{\mathfrak{p}}:K^{\pp} \cap K(\ell) ]$. Since $[K^{\mathfrak{p}}:K^{(1)}]= \frac{1}{2} (\ell-1)$, we have $K^{\pp} \cap K(\ell) =K^{(1)}$, and $[K(\ff)K^{\mathfrak{p}}:K(\ff)]=[K^{\mathfrak{p}}:K^{(1)}]=\frac{1}{2}(\ell-1)$.
\item $\leg{\Delta_K}{\ell} = -1$: In this case, $K^{\mathfrak{p}}=K^{(\ell)}$, so $K^{\pp} \cap K(\ell) =K(\ell)$. This implies $[K(\ff)K^{\mathfrak{p}}:K(\ff)]=[K^{\mathfrak{p}}:K(\ell)]=\frac{1}{2}(\ell-1)$.
\item $\leg{\Delta_K}{\ell} = 0$: Since $[K(\ell):K^{(1)}]=\ell$ and $[K^{\mathfrak{p}}:K^{(1)}]=\frac{1}{2}(\ell-1)$, we have $K^{\pp} \cap K(\ell) =K^{(1)}$. Thus $[K(\ff)K^{\mathfrak{p}}:K(\ff)]=[K^{\mathfrak{p}}:K^{(1)}]=\frac{1}{2}(\ell-1)$.  \qedhere
\end{itemize}
\end{proof}

\begin{remark}
Assume the setup of Theorem \ref{PRIMEMODULITHM} but take $K = \Q(\sqrt{-1})$ or $K = \Q(\sqrt{-3})$.  Then the assertion of 
Theorem \ref{PRIMEMODULITHM}b) is false.  Indeed, if $\ell \geq 5$ and $\leg{\Delta}{\ell} \neq -1$, we have $[K(\ff)K^{\mathfrak{p}}:K(\ff)] \mid \frac{1}{w_K}(\ell-1)$. (See Lemma \ref{CFTLEMMA}.) Suppose $F \supset K$, and let $E_{/F}$ be an elliptic curve with CM by the order in $K$ of conductor $\ff$. If $E(F)$ contains a rational point of order $\ell$, then Theorem \ref{SPY} implies $\frac{1}{2} (\ell-1) \mid [F:K(\ff)]$. Thus $F$ must properly contain $K(\ff)K^{\mathfrak{p}}$.
\end{remark}

\subsection{Sharpness in the Isogeny Torsion Theorem}
\textbf{} \\ \\ \noindent
The following result was established during the proof of Theorem \ref{ITT}.

\begin{lemma}
\label{XXLEMMA1}
Let $E$ be an $\OO$-CM elliptic curve defined over a number field $F$ containing the CM field $K$, and for a positive integer 
$\ff'$ dividing the conductor $\ff$ of $\OO$, let $\iota \colon E \ra E'$ 
be the canonical $F$-rational isogeny to an elliptic curve $E'$ with CM by the order in $K$ of conductor $\ff'$.  Write 
\[ E(F)[\tors] = \Z/s\Z \times \Z/e\Z, \ E'(F)[\tors] = \Z/s'\Z \times \Z/e'\Z, \]
where $s \mid e$ and $s' \mid e'$. Then $s \mid s'$.
\end{lemma}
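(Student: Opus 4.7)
The claim $s \mid s'$ is equivalent to $v_\ell(s) \leq v_\ell(s')$ for every prime $\ell$, so I fix $\ell$ and work locally at $\ell$. Writing $\ff = \ell^{c_1}\bar\ff$ and $\ff' = \ell^{c_2}\bar{\ff'}$ with $\gcd(\bar\ff\bar{\ff'},\ell) = 1$ and $c_2 \leq c_1$, the canonical prime-to-$\ell$ isogenies $E \to E_{\ell^{c_1}}$ and $E' \to E_{\ell^{c_2}}$ have degrees coprime to $\ell$ and hence restrict to $\gg_F$-isomorphisms on $\ell$-primary torsion. Thus, exactly as in the opening of the proof of Theorem \ref{ITT}, the $\ell$-primary components of $E(F)[\tors]$ and $E'(F)[\tors]$ agree with those of $E_{\ell^{c_1}}(F)$ and $E_{\ell^{c_2}}(F)$ respectively, reducing the claim to the case $\ff = \ell^{c_1}$, $\ff' = \ell^{c_2}$. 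By composing canonical isogenies one step at a time, we may further assume $c_2 = c_1 - 1$.

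Now write $E(F)[\ell^\infty] \cong \Z/\ell^a\Z \oplus \Z/\ell^b\Z$ with $a \leq b$, so that $v_\ell(s) = a$. It suffices to show that $E'(F) \supset E'[\ell^a]$, since this forces both $\ell$-adic invariant factors of $E'(F)[\tors]$ to have valuation at least $a$, whence $v_\ell(s') \geq a$. But this inclusion is established in the body of the proof of Theorem \ref{ITT}: the $F$-rational point $Q \coloneqq \frac{1}{\ell^a} + \OO(\ff) \in E(F)$ has image $Q' \coloneqq \iota_{\ff'}(Q) = \frac{1}{\ell^a} + \OO(\ff')$ under the canonical isogeny, and by Lemma \ref{INVLEMMA1} the principal ideal $(\ell^a)$ being invertible identifies $E'[\ell^a]$ with the cyclic $\OO(\ff')$-module $\OO(\ff')/\ell^a\OO(\ff')$, with $Q'$ corresponding to the class of $1$ and hence generating. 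Since $F \supset K$, the $\OO(\ff')$-action on $E'$ is $F$-rational, so $E'[\ell^a] = \OO(\ff') \cdot Q' \subset E'(F)$, giving $\ell^a \mid s'$ as required.

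The main challenge is purely curatorial: one isolates from the proof of Theorem \ref{ITT} the clean slice consisting of the construction of the generator $Q$ from the invariant-factor structure of $E(F)[\ell^\infty]$, together with the verification that $\iota_{\ff'}(Q)$ generates the full $\ell^a$-torsion of $E'$ as an $\OO(\ff')$-module. The subsequent case analysis based on the matrix representation (\ref{ITTEQ1}) in the proof of Theorem \ref{ITT} is aimed at the \emph{larger} invariant factor and is not needed for the present divisibility $s \mid s'$ on the smaller factor.
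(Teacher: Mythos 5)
Your proof is correct and follows essentially the same route as the paper, which simply observes that the lemma "was established during the proof of Theorem \ref{ITT}": you have accurately extracted the relevant slice of that proof (reduction to $\ell$-primary prime-power conductors, then the observation that $\iota_{\ff'}(Q)$ with $Q = \frac{1}{\ell^a}+\OO(\ff)$ generates $E'[\ell^a]$ as an $\OO(\ff')$-module, which is $F$-rational since $F \supset K$). Your closing remark that the matrix computation in (\ref{ITTEQ1}) is only needed for the larger invariant factor, not for $s \mid s'$, is a correct and useful clarification of why the lemma is a proper subset of the full Theorem \ref{ITT}.
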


\noindent
In \cite[$\S$4]{Ross94}, 
Ross claims that if $E$ is a CM elliptic curve defined over a number field $F$ containing the CM field, then the exponent of 
the finite group $E(F)[\tors]$ is an invariant of the $F$-rational isogeny class.  In the setting of Lemma \ref{XXLEMMA1}, 
this would give $e = e'$, and combining this with the conclusion of Lemma \ref{XXLEMMA1} we would get an injective group homomorphism $E(F)[\tors] \hookrightarrow E(F')[\tors]$.  This conclusion is stronger than that of 
Theorem \ref{TIT}.  However Ross's claim is false: in the setup of Lemma \ref{XXLEMMA1}, one can have $e' < e$ (in which 
case there is no injective group homomorphism $E(F)[\tors] \hookrightarrow E'(F)[\tors]$), as the following result shows.  

\begin{prop}
\label{ABBEYPROP3}
Let $\ell > 3$ be a prime number, let $K = \Q(\sqrt{-\ell})$,  let $n \in \Z^{\geq 3}$, let $\OO$ be 
the order in $K$ of conductor $\ff=\ell^{ \lfloor \frac{n}{2} \rfloor}$, and let $F = K(\ff)$.  For any $\OO$-CM elliptic curve $E_{/F}$, there is an extension $L/F$ of degree $\varphi(\ell^n)$ such that $E(L)$ has a point of order $\ell^n$, and no $\OO_K$-CM elliptic curve has an $L$-rational point of order $\ell^k$ for $k>\frac{1}{2} \left(n+1 + \lfloor \frac{n}{2} \rfloor \right)$ (hence 
no $L$-rational point of order $\ell^n$).
\end{prop}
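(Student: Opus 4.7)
The proof splits naturally into two essentially independent parts. For part (i), I will fix an isomorphism $E_{\overline{F}} \cong \mathbb{C}/\OO$, set $m \coloneqq \lfloor n/2 \rfloor$ (so $\ff = \ell^m$), and consider the point $P = \ell^m \tau_K/\ell^n + \OO$. Since $\tau_K$ has order exactly $\ell^m$ in $\mathbb{C}/\OO$ (as $\ell^m \tau_K \in \OO$ but $\ell^{m-1} \tau_K \notin \OO$), the point $P$ has order $\ell^n$. Using the matrix description (\ref{ITTEQ1}) of $C_{\ell^n}(\OO)$ acting on $E[\ell^n]$ in the basis $\{1/\ell^n + \OO,\, P\}$, the stabilizer of $P$ consists of units $u = a + b\ff\tau_K$ satisfying $b\ell^{2m}(\Delta_K - \Delta_K^2)/4 \equiv 0$ and $a \equiv 1 - b\ell^m \Delta_K \pmod{\ell^n}$. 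Because $v_\ell((\Delta_K - \Delta_K^2)/4) = 1$ and $2m+1 \geq n$, the first condition is automatic, so the stabilizer has order $\ell^n$ and the $C_{\ell^n}(\OO)$-orbit of $P$ has size $\varphi(\ell^n)$. Since $C_{\ell^n}(\OO)$ is abelian, the subgroup orbit satisfies $[F(P):F] = |\rho_{\ell^n}(\gg_F)|/|\rho_{\ell^n}(\gg_F) \cap \mathrm{Stab}(P)| = |\rho_{\ell^n}(\gg_F)\cdot\mathrm{Stab}(P)|/|\mathrm{Stab}(P)|$, which divides $\varphi(\ell^n)$. I then take $L \supseteq F(P)$ to be any extension with $[L:F] = \varphi(\ell^n)$.

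For part (ii), I argue by contradiction. Suppose $E'_{/L}$ is an $\OO_K$-CM elliptic curve with an $L$-rational point $Q$ of order $\ell^k$ for $k > (n+1+m)/2$, and let $M = \OO_K \cdot Q \subseteq E'(L)$. By Theorem \ref{Thm2.9}, $M = E'[\ann M]$. Since $\ell$ ramifies in $K$ (as $\ell \mid \Delta_K$), say $(\ell) = \pp^2$, and $\ann M \cap \mathbb{Z} = \ell^k\mathbb{Z}$, one must have $\ann M = \pp^j$ with $j \in \{2k-1, 2k\}$. Theorem \ref{FIRSTCM} then gives $K^{\pp^j} = K^{(1)}(\hh(E'[\pp^j])) = K^{(1)}(\hh(M)) \subseteq L$. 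Now I compute degrees: Theorem \ref{COX7.28THM} with $\leg{\Delta_K}{\ell} = 0$ yields $[F:K^{(1)}] = \ell^m$, hence $[L:K] = h(\ell-1)\ell^{m+n-1}$ where $h \coloneqq \#\Pic\OO_K$; Lemma \ref{CFTLEMMA} gives $[K^{\pp^j}:K] = h\ell^{j-1}(\ell-1)/2$ (using $K \neq \mathbb{Q}(\sqrt{-1}), \mathbb{Q}(\sqrt{-3})$ and $\pp^j \nmid (2)$, which hold since $\ell > 3$). The containment $L \supseteq K^{\pp^j}$ forces $[L:K^{\pp^j}] = 2\ell^{m+n-j}$ to be a positive integer, so $j \leq m+n$. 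Combined with $j \geq 2k-1$, this yields $k \leq (m+n+1)/2$, contradicting the hypothesis.

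The main obstacle is obtaining the factor-of-two sharpening beyond what Theorem \ref{SPY} alone provides: applied to $E'_{/L}$ with a point of order $\ell^k$, Theorem \ref{SPY} only yields $\varphi(\ell^k) \mid [L:\mathbb{Q}]/h$, giving merely $k \leq m+n$, essentially twice the claimed bound. The sharpening requires harvesting two independent factors of $2$: one from the ramification of $\ell$ in $K$ (forcing $\ann M$ to equal $\pp^{2k-1}$ or $\pp^{2k}$, rather than the coarser $(\ell^k) = \pp^{2k}$ only), and one from $[\OO_K^\times : U_{\pp^j}(K)] = 2$ in Lemma \ref{CFTLEMMA} (reflecting that $\OO_K^\times = \{\pm 1\}$). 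Both gains rely on the finer CM-theoretic Theorems \ref{Thm2.9} and \ref{FIRSTCM} beyond the purely Galois-theoretic input of Theorem \ref{SPY}.
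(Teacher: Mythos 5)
Your proof is correct and takes essentially the same route as the paper's: for the existence of $L$, you analyze the Cartan orbit of $e_{2,\ff}(\ell^n)$ using the matrix form (\ref{ITTEQ1}) (the paper phrases this equivalently via the isogeny character $\Phi$ on $\langle e_2\rangle$), and for the nonexistence statement you combine Theorem \ref{Thm2.9}, the ramification $\ell\OO_K=\pp^2$ forcing $\ann M=\pp^j$ with $j\geq 2k-1$, Theorem \ref{FIRSTCM}, and a ray class field degree comparison (the paper uses $\ell$-adic valuations of the degrees over $K^{(1)}$ where you compare the full degrees over $K$, but these are the same computation). The closing commentary on ``two independent factors of $2$'' is a bit imprecise --- the real gain over Theorem \ref{SPY} is the passage from tracking the cyclic group $\langle P\rangle$ to tracking the much larger $\OO_K$-module $E'[\pp^{2k-1}]$, which improves the $\ell$-adic contribution by roughly $\ell^{k-1}$ --- but this paragraph is expository and does not affect the validity of the argument.
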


\begin{proof}
Let $E_{/F}$ be an $\OO$-CM elliptic curve. As in (\ref{ITTEQ1}) we may choose a basis $\{e_1,e_2\}$ for $E[\ell^n]$ so that the image of the mod $\ell^n$ Galois representation consists of matrices 
\[  \left[ \begin{array}{cc} a & b \ff^2 \frac{\Delta_K-\Delta_K^2}{4} \\ b & a+b\ff\Delta_K  \end{array} \right] \text{ with } a,b \in \Z/\ell^n\Z . \]
Since $\ell$ ramifies in $K$ and $\ff = \ell^{\lfloor \frac{n}{2} \rfloor}$, we have $\ord_{\ell}(b \ff^2 \frac{\Delta_K-\Delta_K^2}{4}) = 
1 + 2 \lfloor \frac{n}{2} \rfloor \geq n$, so the matrices have the form  
\[ \left[ \begin{array}{cc} a & 0 \\ b & a + b\ff \Delta_K \end{array} \right]  \text{ with }  a,b \in \Z/\ell^n \Z. \]

The action of $\mathfrak{g}_F$ on $\langle e_2 \rangle$ gives a character $\Phi \colon \mathfrak{g}_F \rightarrow (\Z/\ell^n\Z)^{\times}$. Take $M=(\overline{F})^{\ker \Phi}$. Then $[M:F] \mid \varphi(\ell^n)$ and $\Phi |_{\mathfrak{g}_M}$ is trivial. Thus there exists an extension $L/F$ with $[L:F]= \varphi(\ell^n)$ such that $E(L)$ contains $e_2$.

Let $E'_{/L}$ be an $\OO_K$-CM elliptic curve, and suppose $E'(L)$ contains a point $P$ of order $\ell^k$.  Let $\pp$ be the prime ideal of $\OO_K$ such that $\ell\Ok=\mathfrak{p}^2$.  We claim that the $\OO_K$-submodule 
$M = \langle P \rangle_{\OO_K}$ of $E'(L)$ generated by $P$ contains $E[\pp^{2k-1}]$ and thus, by Theorem \ref{FIRSTCM}, that $K^{\mathfrak{p}^{2k-1}} \subset L$. 
Indeed, by Theorem \ref{Thm2.9}, we have $M = E[I]$ for some ideal $I$ of $\OO_K$ such that $(\OO_K/I,+)$ has $\ell$-power 
order and exponent $\ell^k$.  Since $\ell$ ramifies in $\OO_K$, this forces $I$ to be of the form $\pp^a$ for some $a \in \Z^+$, 
and the smallest $a$ such that $(\OO_K/\pp^a,+)$ has exponent $\ell^k$ is $a = 2k-1$, establishing the claim. Thus 
\[ \ord_{\ell}([K^{\pp^{2k-1}}:K^{(1)}]) =2k-2 \leq  \ord_{\ell}([L:K^{(1)}])= \left\lfloor \frac{n}{2} \right\rfloor +n-1,\]
so $k \leq \frac{1}{2}(n+1+ \lfloor \frac{n}{2} \rfloor)$. \qedhere 
\end{proof}
\noindent
In the setting of Theorem \ref{ITT}, one wonders whether $\# E(F)[\tors] = \# E'(F)[\tors]$.  In fact $\frac{ \# E'(F)[\tors]}{\#E(F)[\tors]}$ can be arbitrarily large:

\begin{prop}
\label{ABBEYPROP2}
Let $\ell$ be an odd prime, let $K \neq \Q(\sqrt{-1}), \Q(\sqrt{-3})$ be an imaginary quadratic field, let 
$\OO$ be the order in $K$ of conductor $\ell$, and let $F = K(\ell)$.  For any $\OO$-CM elliptic curve 
$E_{/F}$ there is an extension $L/F$ such that if $\iota \colon E \ra E'$ is the canonical isogeny to an $\OO_K$-CM elliptic curve $E'$, then 
\[ \ell \mid \frac{ \# E'(L)[\tors]}{\# E(L)[\tors]}. \]
\end{prop}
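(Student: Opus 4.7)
The idea is to choose an extension $L/F$ that forces full $\ell$-torsion of $E'$ to become $L$-rational while keeping $E(L)[\ell^\infty]$ merely cyclic of order $\ell$. Since $\iota$ has degree $\ell$ it restricts to a $\gg_F$-isomorphism on $p^\infty$-torsion for every prime $p\neq\ell$, so the global ratio $\#E'(L)[\tors]/\#E(L)[\tors]$ coincides with the $\ell$-local ratio, and will satisfy the desired divisibility by $\ell$ as soon as $\#E'(L)[\ell^\infty]/\#E(L)[\ell^\infty]\geq\ell^2/\ell=\ell$.

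\textbf{Matrix forms of the representations.} Fix an embedding $F\hookrightarrow\C$ with $E\cong\C/\OO(\ell)$ and $E'\cong\C/\OO_K$ as in the proof of Theorem \ref{ITT}, and use the bases $(e_{1,\ell}(\ell),e_{2,\ell}(\ell))$ and $(e_{1,1}(\ell),e_{2,1}(\ell))$. In $\OO(\ell)/\ell\OO(\ell)$ the element $\epsilon=\ell\tau_K$ satisfies $\epsilon^2=0$, so every element of $C_\ell(\OO)$ acts on $E[\ell]$ by a lower-triangular matrix with equal diagonal entries, namely $\begin{pmatrix}a & 0\\ b & a\end{pmatrix}$. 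By Theorem \ref{PARISHTHM}(b) applied to $E'$ with $\ff=1$ and $N=\ell$ (so $K(N\ff)=K(\ell)=F$), the mod-$\ell$ representation of $E'$ over $F$ is scalar, and $\gg_F$-equivariance of the $F$-rational isogeny $\iota$ forces this scalar to coincide with the character $a$; hence $\rho_{E',\ell}(\sigma)=a(\sigma)\,I$.

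\textbf{Choice of $L$ and mod-$\ell$ torsion.} Theorem \ref{MAINTHM} gives $|\rho_{E,\ell}(\gg_F)|\geq\#\overline{C_\ell(\OO)}=\ell(\ell-1)/2$, whose order is divisible by $\ell$, whereas the $a$-projection takes values in $(\Z/\ell\Z)^\times$, of order coprime to $\ell$. Therefore the kernel of the projection inside $\rho_{E,\ell}(\gg_F)$ has order divisible by $\ell$, and being contained in $\{(1,b):b\in\Z/\ell\Z\}\cong\Z/\ell\Z$, it must equal this entire subgroup. Take $L$ to be the fixed field of $\ker a$, a finite abelian extension of $F$ of degree $|a(\gg_F)|\mid\ell-1$. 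Over $L$, the representation $\rho_{E',\ell}$ is trivial, so $E'[\ell]\subset E'(L)$; simultaneously $\rho_{E,\ell}(\gg_L)=\{(1,b):b\in\Z/\ell\Z\}$ is nontrivial, whose fixed points in $E[\ell]$ are exactly $\langle e_{2,\ell}(\ell)\rangle\cong\Z/\ell\Z$.

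\textbf{Ruling out $L$-rational $\ell^2$-torsion on $E$.} This is the crux of the argument. A calculation in $\OO/\ell^2\OO$ using $\epsilon_2^2=\ell\Delta_K\epsilon_2$ gives $\rho_{E,\ell^2}(\sigma)=\begin{pmatrix}\alpha & 0\\ \beta & \alpha+\beta\ell\Delta_K\end{pmatrix}$. Theorem \ref{MAINTHM} at level $\ell^2$ yields $[F(\mathfrak{h}(E[\ell^2])):F]=\ell^3(\ell-1)/2$, and comparing the feasible integer values of $|\rho_{E,\ell^2}(\gg_L)|=|\rho_{E,\ell^2}(\gg_F)|/|a(\gg_F)|$ against the size $\ell^3$ of the preimage of $\rho_{E,\ell}(\gg_L)$ in $C_{\ell^2}(\OO)$ forces $\rho_{E,\ell^2}(\gg_L)$ to be this full preimage. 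Thus $(\alpha',\beta')\coloneqq((\alpha-1)/\ell,\beta\bmod\ell)$ ranges over all of $\Z/\ell\Z\times\Z/\ell\Z$ as $\sigma$ runs through $\gg_L$. Any potential $L$-rational point of order $\ell^2$ satisfies $\ell Q\in\langle e_{2,\ell}(\ell)\rangle$, so $Q=\ell x'\,e_{1,\ell}(\ell^2)+y\,e_{2,\ell}(\ell^2)$ with $\ell\nmid y$. Imposing $\sigma Q=Q$ for all $\sigma\in\gg_L$ reduces modulo $\ell$ to the linear relation $(x'+\Delta_K y)\beta'+\alpha'y\equiv 0\pmod\ell$ for all $(\alpha',\beta')$; specializing $(\alpha',\beta')=(1,0)$ yields $y\equiv 0\pmod\ell$, a contradiction. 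Hence $E(L)[\ell^\infty]=\Z/\ell\Z$ while $E'(L)[\ell^\infty]\supseteq E'[\ell]\cong(\Z/\ell\Z)^2$, and $\ell$ divides the ratio as required.
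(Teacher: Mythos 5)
Your proof is correct, and it arrives at the result by a somewhat different route than the paper's. Both proofs take the same field $L$ (the subextension of $K(\ff)(E[\ell])/K(\ff)$ over which $e_2$ becomes rational, i.e.\ the fixed field of the diagonal character $a$). The divergence is in how the two sides of the ratio are controlled. To show $E'(L)\supset E'[\ell]$, you invoke Theorem \ref{PARISHTHM}b) to see that $\rho_{E',\ell}$ is scalar over $F=K(\ell)$ and then identify the scalar with $a$ by $\gg_F$-equivariance of $\iota$; the paper instead re-runs the explicit matrix computation from the proof of Theorem \ref{ITT} to see that $\iota(e_1)$ is fixed by $\gg_L$. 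To show $E(L)[\ell^\infty]\cong\Z/\ell\Z$, you compute the mod-$\ell^2$ image explicitly: you pin down $\rho_{E,\ell^2}(\gg_L)$ as the full ($\ell^3$-element) preimage of $\{(1,b)\}$ in $C_{\ell^2}(\OO)$, and a short linear-algebra check then kills any $L$-rational point of order $\ell^2$; the paper gets the same conclusion non-constructively, ruling out full $\ell$-torsion by the degree count from Theorem \ref{MAINTHM} (since $[K^{(\ell)}K(\ell^2):K(\ell)]=\tfrac{1}{2}\ell(\ell-1)>[L:F]$) and ruling out points of order $\ell^2$ by the SPY divisibility, Theorem \ref{SPY}. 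Your version is longer but more self-contained and concrete: it does not depend on Theorem \ref{SPY}, and it pinpoints the exact mod-$\ell^2$ image, which is extra information. The paper's version is shorter because it reuses already-established degree bounds. One small point worth making explicit in your write-up: the preimage argument relies on the fact that $|\rho_{E,\ell^2}(\gg_L)|$ must equal $\ell^3$ in both cases $|\rho_{E,\ell^2}(\gg_F)|\in\{\ell^3(\ell-1),\,\ell^3(\ell-1)/2\}$; in the latter case one needs to observe that $|a(\gg_F)|$ must then be $(\ell-1)/2$ (otherwise the quotient would not be an integer), which is where the index-$\leq 2$ statement from Corollary \ref{NEWCOR3} really gets used. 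You gesture at this with ``comparing the feasible integer values,'' which is the right idea but could be spelled out.
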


\begin{proof}
Let $E_{/F}$ be an $\OO$-CM elliptic curve. As above, there is a basis $\{e_1,e_2\}$ for $E[\ell]$ such that
\[ \rho_{\ell}(\mathfrak{g}_F) \subset\left\{ \left[ \begin{array}{cc} a & 0 \\ b & a \end{array} \right] \mid a,b \in \Z/\ell\Z \right\} \]
and there is an extension $L/F$ with $[L:F]=\ell-1$ such that $E(L)$ contains $e_2$. In fact, $E(L)[\ell^{\infty}] \cong \Z/\ell\Z$. Indeed, $E$ does not have full $\ell$-torsion over $L$ since Theorem \ref{MAINTHM} would imply $K^{(\ell)}K(\ell^2) \subset L$ and $\frac{1}{2} \ell(\ell-1) = [K^{(\ell)}K(\ell^2):K(\ell)]$. In addition, $E(L)$ has no point of order $\ell^2$ by Theorem \ref{SPY}.

Let $\iota \colon E \rightarrow E'$ be the canonical $L$-rational isogeny from $E_{/L}$ to $E'_{/L}$, where $E'$ has $\OO_K$-CM. Since $e_2 \in E(L)$, the second paragraph of $\S3$ shows that $\iota(e_1) \in E'(L)$, and $\iota(e_1)$ generates $E'[\ell]$ as an $\Ok$-module. In other words, $\Z/\ell\Z \times \Z/\ell\Z \hookrightarrow E'(L)[\tors]$. It follows that $\ell \mid \frac{ \# E'(L)[\tors]}{\# E(L)[\tors]}$.
\end{proof}

\noindent

Finally, Theorem \ref{ITT} requires $K \subset F$.  This hypothesis cannot be omitted:

\begin{prop}
\label{ABBEYPROP1}
Let $\ell > 3$ be a prime with $\ell \equiv 3 \pmod{4}$ and let $n \in \Z^{\geq 3}$.  Let $K = \Q(\sqrt{-\ell})$, and 
let $\OO$ be the order in $K$ of conductor $\ff = \ell^{\lfloor \frac{n}{2} \rfloor}$.  Let $F = \Q(j(\C/\OO))$.  There 
is an elliptic curve $E_{/F}$ and an extension $L/F$ of degree $\frac{\varphi(\ell^n)}{2}$ such that: \\
(i) $L \not \supset K$, \\
(ii) $E(L)$ has a point of order $\ell^n$, and \\
(iii) for every $\OO_K$-CM elliptic curve $E'_{/L}$ we have $\ell^n \nmid \# E'(L)[\tors]$.
\end{prop}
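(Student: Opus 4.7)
The strategy is to run the construction of Proposition \ref{ABBEYPROP3} at the level of $F = \Q(j(\C/\OO))$ in place of $K(\ff)$, using a carefully chosen quadratic twist to shave a factor of $2$ off the orbit of the relevant torsion point. Start with any $\OO$-CM elliptic curve $E_{0/F}$ with $\C$-uniformization $E_{0/\C} \cong \C/\OO$, and let $\{e_1, e_2\}$ be the $\Z/\ell^n\Z$-basis of $E_0[\ell^n]$ given by $e_1 = \tfrac{1}{\ell^n} + \OO$ and $e_2 = \tfrac{\ell^k \tau_K}{\ell^n} + \OO$, so that $\rho_{\ell^n}(\gg_{K(\ff)})$ has the matrix form displayed in the proof of Proposition \ref{ABBEYPROP3}. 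Direct computation with this matrix form and the identity $\bar\tau_K = \Delta_K - \tau_K$ yields, for the auxiliary point $P \coloneqq \ell^{k+1} e_1 + 2 e_2$ of order $\ell^n$: (a) $\sigma(P) = \lambda(\sigma) P$ for $\sigma \in \gg_{K(\ff)}$, where $\lambda(\sigma) = a - \tfrac{1}{2} b \ell^{k+1} \in (\Z/\ell^n\Z)^{\times}$; (b) $c(P) = -P$ for complex conjugation $c$; (c) $\lambda$ is invariant under conjugation by $c$ on $\gg_{K(\ff)}$. In particular, $\langle P \rangle$ is $\gg_F$-stable and the $\gg_F$-action on it is given by an abelian character.

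Because $\ell \equiv 3 \pmod 4$, the mod-$\ell$ Legendre character $\chi_0$ satisfies $\chi_0(-1) = -1$, which forces $\chi_0(u) u$ to be a square in $(\Z/\ell^n\Z)^{\times}$ for every unit $u$. The $c$-invariance (c) lets $\chi_0 \circ \lambda \colon \gg_{K(\ff)} \to \{\pm 1\}$ extend to a quadratic character $\chi \colon \gg_F \to \{\pm 1\}$ in exactly two ways, differing by the quadratic character of $K/\Q$; I choose the extension with $\chi(c) = -1$ and set $E \coloneqq E_0^{\chi}$. Decomposing each $\sigma \in \gg_F$ as $\sigma = c^{\varepsilon} \tau$ with $\tau \in \gg_{K(\ff)}$ and $\varepsilon \in \{0,1\}$, a short check shows that the eigenvalue character $\Phi_F \colon \gg_F \to (\Z/\ell^n\Z)^{\times}$ of $\gg_F$ on $\langle P \rangle$ for $E$ satisfies $\Phi_F(\sigma) = \chi_0(\lambda(\tau)) \lambda(\tau)$ independently of $\varepsilon$. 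Hence $\mathrm{im}\,\Phi_F$ lies in the squares of $(\Z/\ell^n\Z)^{\times}$ and, because the restriction to $\gg_{K(\ff)}$ already surjects onto them, equals the subgroup of squares, of size $\varphi(\ell^n)/2$; moreover $\Phi_F(c) = 1$.

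Setting $L \coloneqq \overline{F}^{\ker \Phi_F}$, we have $[L:F] = \varphi(\ell^n)/2$ and $P \in E(L)$, which is (ii). Since $c \in \ker \Phi_F \subset \gg_L$, $c$ is trivial on $L$, so $L \cap K = \Q$; this gives (i). For (iii), suppose for contradiction that $E'_{/L}$ is $\OO_K$-CM with an $L$-rational point of order $\ell^n$. Base-changing to the quadratic extension $LK/L$ and arguing as in the proof of Proposition \ref{ABBEYPROP3} forces $K^{\pp^{2n-1}} \subset LK$, where $\pp$ is the prime of $\OO_K$ above $\ell$. Using $[F:\Q] = \ell^k h(\OO_K)$ (which follows from Theorem \ref{COX7.28THM} because $(\Delta_K/\ell) = 0$) together with Lemma \ref{CFTLEMMA}, one finds $\ord_{\ell}[LK:K^{(1)}] = n + k - 1$ while $\ord_{\ell}[K^{\pp^{2n-1}}:K^{(1)}] = 2n - 2$, so the inclusion would force $k \geq n - 1$, contradicting $k = \lfloor n/2 \rfloor$ for $n \geq 3$. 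The main obstacle is the twist in the second paragraph: verifying the $c$-invariance, picking the extension with $\chi(c) = -1$, and observing that the resulting $\Phi_F$ lands in the squares all crucially use $\ell \equiv 3 \pmod 4$; the ramification count for (iii) is routine once this setup is in place.
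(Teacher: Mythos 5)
Your construction of $E$ and $L$ is a hands-on reproof, in this special case, of the twisting result the paper simply cites as \cite[Thm.\ 5.6]{BCS} (together with Kwon's theorem producing the $F$-rational cyclic $\ell^n$-subgroup). The explicit matrix computation, the identity $c(P) = -P$ (using $\bar\tau_K = \Delta_K - \tau_K$), the choice of extension with $\chi(c) = -1$, and the verification that $\mathrm{im}\,\Phi_F$ is exactly the index-two subgroup of squares in $(\Z/\ell^n\Z)^\times$ (which uses that $\overline{\rho_{\ell^n}}(\gg_{K(\ff)})$ is full, so $\lambda(\gg_{K(\ff)})$ is either all of $(\Z/\ell^n\Z)^\times$ or its square subgroup) all check out. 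Your route to (i) via $\Phi_F(c) = 1$ is also valid; the paper instead derives (i) from $[L:\Q]$ being odd.

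However, (iii) has a genuine gap. You rule out an $L$-rational point of order $\ell^n$ on $E'$, but the proposition asserts $\ell^n \nmid \# E'(L)[\tors]$, and these are not equivalent: one could have $E'(L)[\ell^\infty] \cong \Z/\ell^m\Z \times \Z/\ell^{m'}\Z$ with $m \geq 1$, $m' < n$, yet $m + m' \geq n$. Your ramification count bounds only $m'$ and says nothing about $m$. The paper closes exactly this hole by first showing $E'(L)[\ell^\infty]$ is cyclic: since $[L:\Q]$ is odd, full $\ell$-torsion in $E'(L)$ would force $\Q(\zeta_\ell) \subset L$ by the Weil pairing, impossible because $[\Q(\zeta_\ell):\Q] = \ell-1$ is even. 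The same fix is available to you at no cost: in your setup $[L:\Q] = \frac{\varphi(\ell^n)}{2} h_K \ell^k = \ell^{n-1+k} h_K \cdot \frac{\ell-1}{2}$, and this is odd because $h_K$ is odd (genus theory, $\ell$ prime) and $\ell \equiv 3 \pmod 4$ makes $\frac{\ell-1}{2}$ odd. You should state this cyclicity step explicitly before invoking your degree count.
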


\begin{proof}
Let $E_{/F}$ be an $\OO$-CM elliptic curve. By \cite[Corollary 4.2]{Kwon99}, $E$ has an $F$-rational subgroup which is cyclic of order $\ell^n$. It follows from \cite[Theorem 5.6]{BCS} that there is a twist $E_1$ of $E_{/F}$ and an extension $L/F$ of degree $\varphi(\ell^n)/2$ such that $E_1(L)$ has a point of order $\ell^n$. Note $[L:\Q]=h_K\ell^{ \left \lfloor{\frac{n}{2}}\right \rfloor}\frac{\varphi(\ell^n)}{2}$ is odd (see \cite[Proposition 3.11]{Cox89}) , so $K \not\subset L$.

Let $E'_{/L}$ be an $\OO_K$-CM elliptic curve. Since $[L:\Q]$ is odd, $E'(L)[\ell^{\infty}]$ must be cyclic, as full $\ell^k$-torsion would imply $\Q(\zeta_{\ell^k}) \subset L$ by the Weil pairing. As in the last paragraph of the proof of Proposition \ref{ABBEYPROP3}, $E'(LK)$ contains no point of order $\ell^n$.  Hence $E'(L)$ contains no point of order $\ell^n$, and $\ell^n \nmid \#E'(L)[\tors]$.
\end{proof}

\subsection{Minimal and Maximal Cartan Orbits}
Let $\OO$ be an order, let $N \in \Z^+$, and let $P \in \OO/N\O$ be a point of order $N$. Since $C_N(\OO)$ contains all scalar 
matrices, if $P \in \OO/N\OO$ has order $N$, then the orbit of $C_N(\OO)$ on $P$ has size at least $\varphi(N)$.  On the other 
hand, the orbit of $C_N(\OO)$ on $P$ is certainly no larger than the number of order $N$ points of $\OO/N\OO$. 
\\ \\
In this 
section we will find all pairs $(\OO,N)$ for which there exists a Cartan orbit of this smallest possible size and also 
all pairs for which there exists a Cartan orbit of this largest possible size.
\\ \\
We introduce the shorthand $H(\OO,N)$ to mean: \emph{there is a point $P$ of order $N$ in $\OO/N\OO$ such that the $C_N(\OO)$-orbit of $P$ has size $\varphi(N)$.}

\begin{lemma}
\label{NEWLEMMA1}
Let $\OO$ be an order, and let $N = \ell_1^{a_1} \cdots \ell_r^{a_r} \in \Z^+$.  Then $H(\OO,N)$ holds 
iff $H(\OO,\ell_i^{a_i})$ holds for all $1 \leq i \leq r$.
\end{lemma}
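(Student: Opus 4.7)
The plan is to reduce the problem to the prime-power case via the Chinese Remainder Theorem decomposition already recorded in Lemma \ref{LASTLEMMA}(a), which gives the canonical ring isomorphism
\[
\OO/N\OO \;\cong\; \prod_{i=1}^{r} \OO/\ell_i^{a_i}\OO,
\]
and hence a canonical group isomorphism
\[
C_N(\OO) \;\cong\; \prod_{i=1}^{r} C_{\ell_i^{a_i}}(\OO).
\]
Under this identification, a point $P \in \OO/N\OO$ corresponds to a tuple $(P_1,\ldots,P_r)$ with $P_i \in \OO/\ell_i^{a_i}\OO$, and the $C_N(\OO)$-action respects the product structure.

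First I would verify two compatibilities: (a) $P$ has order $N$ in $\OO/N\OO$ if and only if each $P_i$ has order $\ell_i^{a_i}$ in $\OO/\ell_i^{a_i}\OO$, which is immediate from the coprimality of the $\ell_i^{a_i}$; and (b) the $C_N(\OO)$-orbit of $P$ is the Cartesian product of the $C_{\ell_i^{a_i}}(\OO)$-orbits of the $P_i$, so that
\[
\#\bigl(C_N(\OO)\cdot P\bigr) \;=\; \prod_{i=1}^{r} \#\bigl(C_{\ell_i^{a_i}}(\OO)\cdot P_i\bigr).
\]
Combined with $\varphi(N) = \prod_i \varphi(\ell_i^{a_i})$ and the basic lower bound $\#(C_{\ell_i^{a_i}}(\OO)\cdot P_i) \geq \varphi(\ell_i^{a_i})$ (from scalars acting faithfully on points of order $\ell_i^{a_i}$) noted in the introductory paragraph of this subsection, the equality $\#(C_N(\OO)\cdot P) = \varphi(N)$ forces equality in each factor and conversely.

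From these two facts both implications are essentially automatic. For the forward direction, a witness $P$ for $H(\OO,N)$ yields components $P_i$ each of which is a witness for $H(\OO,\ell_i^{a_i})$. For the reverse direction, witnesses $P_i$ for each $H(\OO,\ell_i^{a_i})$ assemble via CRT into a point $P = (P_1,\ldots,P_r)$ of order $N$ whose $C_N(\OO)$-orbit has size $\prod_i \varphi(\ell_i^{a_i}) = \varphi(N)$.

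There is no serious obstacle here; the only point that requires a moment's care is checking that the $C_N(\OO)$-orbit really does decompose as a product of local orbits, which follows because the CRT isomorphism is $\OO$-linear and sends the scalar and multiplicative actions on $\OO/N\OO$ to the componentwise scalar and multiplicative actions on the product. The more substantive analysis — namely, determining for which prime powers $\ell^a$ the predicate $H(\OO,\ell^a)$ actually holds — is left to the subsequent material and is not needed for this lemma.
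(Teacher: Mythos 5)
Your proposal is correct and follows exactly the route the paper intends: the paper's proof consists of the single sentence ``This is an easy consequence of the Chinese Remainder Theorem,'' and your write-up supplies precisely the CRT decomposition of $\OO/N\OO$, the induced product decomposition of $C_N(\OO)$ and its orbits, and the observation that $\varphi(N)=\prod_i\varphi(\ell_i^{a_i})$ combined with the lower bound $\#(C_{\ell_i^{a_i}}(\OO)\cdot P_i)\geq\varphi(\ell_i^{a_i})$ forces componentwise equality.
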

\begin{proof}
This is an easy consequence of the Chinese remainder theorem.
\end{proof}

\begin{lemma}
\label{NEWLEMMA2}
Let $\OO$ be the order of discriminant $\Delta$, $\ell$ a prime number and $a \in \Z^+$.  \\
a) If $\left(\frac{\Delta}{\ell} \right) = 1$,  there is an $\OO$-submodule of $\OO/\ell^a \OO$ with underyling 
$\Z$-module $\Z/\ell^a \Z$.  \\
b) If $\left(\frac{\Delta}{\ell} \right) = -1$, then $C_{\ell^a}(\OO)$ acts simply transitively on the order $\ell^a$ 
elements of $\OO/\ell^a \OO$. 
\end{lemma}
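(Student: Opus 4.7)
The plan is to split into the two cases according to the splitting behavior of $\ell$ in the order $\OO$, using that $\left(\frac{\Delta}{\ell}\right) \neq 0$ forces $\ell \nmid \ff$. In both cases, the key tool is that $\OO$ localized at any prime above $\ell$ agrees with the (maximal) localization of $\OO_K$, hence is a discrete valuation ring; this is what allows us to reduce questions about $\OO/\ell^a\OO$ to familiar computations in a principal Artinian ring.

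For part (a), I would argue as follows. Since $\left(\frac{\Delta}{\ell}\right) = 1$, we have $\ell \nmid \ff$ and $\ell$ splits in $\OO_K$, so $\ell\OO = \mathfrak{q}\overline{\mathfrak{q}}$ with $\mathfrak{q}\neq \overline{\mathfrak{q}}$ two distinct invertible primes of $\OO$. By the Chinese Remainder Theorem,
\[ \OO/\ell^a\OO \;\cong\; \OO/\mathfrak{q}^a \,\times\, \OO/\overline{\mathfrak{q}}^a \]
as $\OO$-modules, and each factor is an $\OO$-submodule. It remains to verify that $\OO/\mathfrak{q}^a$ has underlying $\Z$-module $\Z/\ell^a\Z$. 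Its $\Z$-cardinality equals $(\#\OO/\mathfrak{q})^a = \ell^a$ (using that $\mathfrak{q}$ is invertible so the successive quotients $\mathfrak{q}^i/\mathfrak{q}^{i+1}$ are isomorphic to $\OO/\mathfrak{q} \cong \F_\ell$), and the element $1 + \mathfrak{q}^a$ has additive order exactly $\ell^a$ because $\Z \cap \mathfrak{q}^a = \ell^a \Z$ (since $v_\mathfrak{q}(\ell) = 1$ in the DVR $\OO_\mathfrak{q}$). Thus $1$ generates $\OO/\mathfrak{q}^a$ as a $\Z$-module, forcing $\OO/\mathfrak{q}^a \cong \Z/\ell^a\Z$.

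For part (b), $\left(\frac{\Delta}{\ell}\right) = -1$ implies $\ell\nmid \ff$ and that $\ell\OO$ is a maximal ideal of $\OO$ with residue field $\F_{\ell^2}$. Thus $R := \OO/\ell^a\OO$ is a local ring whose maximal ideal is $\mathfrak{m} = \ell\OO/\ell^a\OO$, so $(\OO/\ell^a\OO)^\times = C_{\ell^a}(\OO)$ is precisely the complement $R\setminus\mathfrak{m}$. I would then identify the order-$\ell^a$ elements of $\OO/\ell^a\OO$: an element $x + \ell^a\OO$ has additive order less than $\ell^a$ iff $\ell^{a-1} x \in \ell^a\OO$, which, by the DVR property of $\OO_{\ell\OO}$, is equivalent to $x \in \ell\OO$. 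So the order-$\ell^a$ elements are exactly the units $R^\times = C_{\ell^a}(\OO)$, and the multiplication action of $C_{\ell^a}(\OO)$ on itself is obviously simply transitive.

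Nothing here is really an obstacle; the only subtle point is the verification that $1$ has additive order $\ell^a$ in $\OO/\mathfrak{q}^a$ in part (a), which I would handle via the DVR structure of the localization $\OO_\mathfrak{q}$ (available because $\ell \nmid \ff$).
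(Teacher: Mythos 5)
Your proof is correct, and it takes a genuinely (if mildly) different route. For part (a), the paper passes to the $\ell$-adic completion $\OO\otimes\Z_\ell$, invokes a structure theorem (idempotent lifting, via Eisenbud) to conclude $\OO\otimes\Z_\ell\cong\Z_\ell\times\Z_\ell$ from $\OO/\ell\OO\cong\Z/\ell\Z\times\Z/\ell\Z$, and then reduces mod $\ell^a$. You instead stay at finite level: you factor $\ell\OO=\mathfrak{q}\overline{\mathfrak{q}}$ (valid since $\ell\nmid\ff$), apply CRT to get $\OO/\ell^a\OO\cong\OO/\mathfrak{q}^a\times\OO/\overline{\mathfrak{q}}^a$, and then explicitly verify that $1$ has additive order $\ell^a$ in $\OO/\mathfrak{q}^a$ using the DVR property of the localization. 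For part (b), the paper again argues via the completion being a complete DVR; you instead show $\OO/\ell^a\OO$ is local with maximal ideal $\ell\OO/\ell^a\OO$ directly, and identify the order-$\ell^a$ elements with the units. Your approach is more elementary (no completion or idempotent-lifting needed) and spells out the additive-order verification that the paper leaves implicit; the paper's is shorter because it outsources the structure to a cited theorem. One tiny simplification in your part (b): since $\OO$ is a domain, $\ell^{a-1}x\in\ell^a\OO$ is already equivalent to $x\in\ell\OO$ by cancellation, with no need to invoke the DVR structure of $\OO_{\ell\OO}$ for that particular step.
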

\begin{proof}
a) if $\left( \frac{\Delta}{\ell} \right) = 1$, then $\OO/\ell \OO = \OO_K/\ell \OO_K \cong \Z/\ell \Z \times \Z/\ell \Z$, so 
$\OO \otimes \Z_{\ell}$ is isomorphic as a ring to $\Z_{\ell} \times \Z_{\ell}$ (see e.g. 
\cite[Cor. 7.5]{Eisenbud}) and thus $\OO/\ell^a \OO$ is isomorphic as a ring to $\Z/\ell^a \Z \times \Z/\ell^a \Z$.  \\
b) If $\left( \frac{\Delta}{\ell} \right) = -1$, then $\OO \otimes \Z_{\ell} = \OO_K \otimes \Z_{\ell}$ is a complete 
DVR with uniformizer $\ell$, so the ring $\OO/\ell^a \OO$ is finite, local and principal with maximal ideal $\langle \ell \rangle$.  
An element of $\OO/\ell^a \OO$ has order $\ell^a$ iff it lies in the unit group $C_{\ell^a}(\OO)$.
\end{proof}

\begin{lemma}
\label{NEWLEMMA3}
Let $\OO$ be the order of discriminant $\Delta$, and let $N \in \Z^+$.  The following are equivalent: \\
(i) If $2 \mid N$, then $\left( \frac{\Delta}{2} \right) \neq 1$. \\
(ii) The $\Z/N\Z$-subalgebra of $\OO/N\OO$ generated by $C_N(\OO)$ is 
 $\OO/N\OO$. 
\end{lemma}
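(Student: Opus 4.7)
The plan is to reduce to the prime power case via the Chinese Remainder Theorem and then to the residue case via Nakayama's Lemma, followed by a short trichotomy.

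First, I would apply the Chinese Remainder Theorem. Writing $N = \prod_\ell \ell^{a_\ell}$, Lemma \ref{LASTLEMMA}(a) gives $\OO/N\OO \cong \prod_\ell \OO/\ell^{a_\ell}\OO$ and $C_N(\OO) \cong \prod_\ell C_{\ell^{a_\ell}}(\OO)$. Under this decomposition $\Z/N\Z = \prod_\ell \Z/\ell^{a_\ell}\Z$, and the orthogonal idempotents $e_\ell$ lie in the scalar subring, hence in every $\Z/N\Z$-subalgebra. It follows that the $\Z/N\Z$-subalgebra $B$ of $\OO/N\OO$ generated by $C_N(\OO)$ splits as $\prod_\ell B_\ell$, with $B_\ell$ the $\Z/\ell^{a_\ell}\Z$-subalgebra of $\OO/\ell^{a_\ell}\OO$ generated by $C_{\ell^{a_\ell}}(\OO)$. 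Condition (i) for $N$ is plainly the conjunction of its prime power versions (since the hypothesis is vacuous for odd $\ell$), so it suffices to prove the equivalence when $N = \ell^a$.

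Second, I would invoke Nakayama's Lemma. Set $A = \OO/\ell^a\OO$, a free $\Z/\ell^a\Z$-module of rank $2$ with $A/\ell A = \OO/\ell\OO$. The reduction map $\OO/\ell^a\OO \to \OO/\ell\OO$ is a surjective ring homomorphism whose kernel $\ell A$ lies in the Jacobson radical, so $C_{\ell^a}(\OO)$ surjects onto $C_\ell(\OO)$. Therefore the image of $B_\ell$ in $A/\ell A$ is precisely the $\F_\ell$-subalgebra of $\OO/\ell\OO$ generated by $C_\ell(\OO)$. By Nakayama, $B_\ell = A$ if and only if this image equals $\OO/\ell\OO$, reducing us to the case $a = 1$.

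Finally, I would check the three cases according to $\leg{\Delta}{\ell}$. If $\leg{\Delta}{\ell} = -1$ (inert), $\OO/\ell\OO \cong \F_{\ell^2}$ is a field and $|\F_{\ell^2}^{\times}| = \ell^2 - 1 > \ell - 1 = |\F_\ell^{\times}|$, so some unit lies outside $\F_\ell$ and generates the whole field. If $\leg{\Delta}{\ell} = 0$ (ramified), $\OO/\ell\OO \cong \F_\ell[\epsilon]/(\epsilon^2)$ and the unit $1+\epsilon$ minus the scalar $1$ produces $\epsilon$, yielding the full algebra. If $\leg{\Delta}{\ell} = 1$ (split), $\OO/\ell\OO \cong \F_\ell \times \F_\ell$ with units $\F_\ell^{\times} \times \F_\ell^{\times}$; for odd $\ell$ the units $(1,1)$ and $(1,2)$ have Vandermonde determinant $1$ and hence form an $\F_\ell$-basis, whereas for $\ell = 2$ the only unit is $(1,1)$ and the generated subalgebra is $\F_2 \cdot (1,1) \subsetneq \F_2 \times \F_2$. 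Thus (ii) fails exactly when $\ell = 2$ and $\leg{\Delta}{2} = 1$, matching the negation of (i). The one step that requires genuine care is the idempotent argument splitting the generated subalgebra across the CRT decomposition; the rest is elementary counting and linear algebra.
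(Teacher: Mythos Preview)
Your proof is correct, but it proceeds differently from the paper's. Both arguments begin with the CRT reduction to $N=\ell^a$. From there the paper uses a pure counting trick: since $B$ is a $\Z/\ell^a\Z$-submodule, $\#B$ is a power of $\ell$, and one just observes that $\#C_{\ell^a}(\OO)+1 > \ell^{2a-1}$ whenever (i) holds (using Lemma~\ref{LASTLEMMA}(b)), forcing $\#B=\ell^{2a}$. In the bad case $\ell=2$, $\leg{\Delta}{2}=1$, the paper exhibits at level $2^a$ the explicit proper subalgebra $\{\alpha\equiv\beta\pmod 2\}$ of $\Z/2^a\Z\times\Z/2^a\Z$ containing all units.

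You instead add a Nakayama step to push the question down to $a=1$, where the trichotomy on $\OO/\ell\OO$ is transparent. This is cleaner structurally: the case analysis happens over a field (or $\F_\ell[\epsilon]$), and the linear-independence check $(1,1),(1,2)$ is immediate. The paper's approach avoids Nakayama entirely and handles all $a$ at once by a single inequality; yours isolates the residue-level obstruction and makes it visible that nothing new happens for $a>1$. Either way the content is the same: the only obstruction is the diagonal $\F_2\subset\F_2\times\F_2$.
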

\begin{proof}
Using the Chinese remainder theorem we reduce to the case of $N = \ell^a$ a power of 
a prime number $\ell$.  Let $B$ be the $\Z/\ell^a \Z$-subalgebra generated by $C_{\ell^a}(\OO)$, so $\# B = \ell^b$ for 
some $b \leq 2a$. \\
(i) $\implies$ (ii): Since $0 \in B \setminus C_{\ell^a}(\OO)$, we have 
 \[\# B\geq \# C_{\ell^a}(\OO) +1 \] \[= \ell^{2a} \left(1-\frac{1}{\ell}\right)\left(1-\left(\frac{\Delta}{\ell} \right) \frac{1}{\ell}\right) +1  \geq 
\begin{cases}
\frac{4}{9} \ell^{2a} + 1 > \ell^{2a-1}, & \text{ if }  \ell \geq 3 \\
\frac{1}{2} \ell^{2a} + 1 > \ell^{2a-1}, & \text{ if } \ell =2 \text{ and } \left(\frac{\Delta}{2} \right) \neq  1 \end{cases}. \]
Thus $b = 2a$ and $B = \OO/\ell^a \OO$.  \\
$\neg$ (i) $\implies$ $\neg$ (ii): If $\ell= 2$ and $\left( \frac{\Delta}{2} \right) = 1$, then 
\[\OO/2^a \OO \cong \left\{ \left[ \begin{array}{cc} \alpha & 0 \\ 0 & \beta \end{array} \right] \mid \alpha,\beta \in \Z/2^a \Z \right\} \]
and $C_{2^a}(\OO)$ consists of the set of such matrices with $\alpha,\beta \in (\Z/2^a \Z)^{\times}$.  Thus $C_{2^a}(\OO)$ 
is contained in the subalgebra 
\[ \mathcal{B} =  \left\{ \left[ \begin{array}{cc} \alpha & 0 \\ 0 & \beta \end{array} \right] \mid \alpha,\beta \in \Z/2^a \Z \text{ and } \alpha \equiv \beta \pmod{2} \right\} \]
of order $2^{2a-1}$, so $B \subset \mathcal{B} \subsetneq \OO/2^a \OO$.\footnote{Since $\# B \geq \# C_{2^a}(\OO) + 1 = 2^{2a-2}+1 > 
2^{2a-2}$, in fact we have $B = \mathcal{B}$.}
 \end{proof}

\begin{lemma}
\label{NEWLEMMA4}
For an order $\OO$ and $N \in \Z^+$, the following are equivalent: \\
(i) There is an ideal $I$ of $\OO$ with $\OO/I \cong \Z/N\Z$.  \\
(ii) There is an $\OO$-submodule of $\OO/N\OO$ with underlying commutative group $\Z/N\Z$.  \\
(iii) $H(\OO,N)$ holds.
 \end{lemma}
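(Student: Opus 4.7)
The plan is to reduce to the case $N = \ell^a$ a prime power by the Chinese Remainder Theorem: Lemma \ref{NEWLEMMA1} handles (iii), and the splitting $\OO/N\OO \cong \prod_{\ell \mid N} \OO/\ell^{v_\ell(N)}\OO$ handles (i) and (ii). Throughout, assume $N = \ell^a$.

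The easy half consists of the implications (ii) $\Rightarrow$ (i), (ii) $\Rightarrow$ (iii), and (i) $\Rightarrow$ (ii). Any $\Z$-generator $P$ of $M$ is automatically an $\OO$-generator (since $M$ is $\OO$-stable and contains $\Z P$), so $M \cong \OO/\ann_{\OO} P$, yielding (i). The $C_N(\OO)$-orbit of $P$ is pinched between the $(\Z/N\Z)^{\times}$-orbit of $P$ of size $\varphi(N)$ and the set of order-$N$ elements of $M \cong \Z/N\Z$, also of size $\varphi(N)$, yielding (iii). For (i) $\Rightarrow$ (ii), consider the short exact sequence of $\Z/N\Z$-modules
\[ 0 \to I/N\OO \to \OO/N\OO \to \OO/I \to 0. \]
It splits because $\OO/I \cong \Z/N\Z$ is free of rank $1$ over $\Z/N\Z$. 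Thus $I/N\OO$ is a direct summand of the free rank-$2$ module $\OO/N\OO$, and its reduction modulo $\ell$ is one-dimensional over $\F_\ell$. Nakayama's lemma over the local ring $\Z/N\Z$ then forces $I/N\OO$ to be cyclic, hence isomorphic to $\Z/N\Z$ by cardinality; take $M := I/N\OO$.

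The main work is (iii) $\Rightarrow$ (i). Setting $J = \ann_{\OO} P$, the orbit $C_N(\OO) \cdot P$ has size $|(\OO/J)^{\times}|$, using that surjections of finite commutative rings are surjective on unit groups (shown via CRT decomposition into local factors and lifting unit residues locally). Decompose $\OO/J = \prod_{i=1}^k R_i$ with each $R_i$ a local ring of order $\ell^{f_i g_i}$ and residue field $\F_{\ell^{f_i}}$. The identity $|(\OO/J)^{\times}| = \varphi(N)$ becomes
\[ \prod_{i=1}^{k} \ell^{f_i(g_i-1)}(\ell^{f_i}-1) = \ell^{a-1}(\ell-1). \]
The prime-to-$\ell$ part forces $f_i = 1$ for every $i$, and the relation $\sum(g_i - 1) = a-1$ combined with the constraint that $\OO/J$ has additive exponent $N = \ell^a$ (because $P$ has order $N$) forces exactly one $g_{i_0} = a$ with the remaining $g_i = 1$. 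The factor $R_{i_0}$ is then a local ring of order $\ell^a$ with cyclic additive group of the same order, so by the characteristic argument $R_{i_0} \cong \Z/N\Z$ as a ring. The composite surjection $\OO \twoheadrightarrow \OO/J \twoheadrightarrow R_{i_0}$ yields the required ideal. The main obstacle is the case $\ell = 2$, where $\OO/J$ need not be local (the prime-to-$\ell$ equation $\prod(\ell^{f_i}-1) = \ell-1$ degenerates to $\prod 1 = 1$ and places no bound on $k$); the combined accounting of order and additive exponent is precisely what is needed to isolate the one local factor that is cyclic.
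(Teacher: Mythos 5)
Your proof is correct, and the key implication (iii)~$\Rightarrow$~(i) takes a genuinely different route from the paper's. The paper, after reducing via Lemma~\ref{NEWLEMMA3}, splits into the case where $2\nmid N$ or $\left(\frac{\Delta}{2}\right)\neq 1$ (there, a basis $e_1=P, e_2$ places $C_N(\OO)$ in the upper-triangular subalgebra, and Lemma~\ref{NEWLEMMA3} guarantees $\OO/N\OO$ lies there too, so $\langle P\rangle$ is $\OO$-stable), and the residual case $2\mid N$, $\left(\frac{\Delta}{2}\right)=1$, which is handled separately via Lemma~\ref{NEWLEMMA2}a) and CRT gluing. You instead work directly with $\OO/J$ for $J=\ann_\OO P$: decompose it as a product of finite local rings, use $|(\OO/J)^\times|=\varphi(\ell^a)$ to pin down all residue fields as $\F_\ell$ and to obtain the constraint $\sum(g_i-1)=a-1$, then invoke the additive exponent $\ell^a$ (forced by the order of $P$) to isolate the single local factor with $g_{i_0}=a$, whose additive group is then necessarily cyclic of order $\ell^a$. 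The kernel of $\OO\twoheadrightarrow R_{i_0}$ gives the required ideal. What this buys is uniformity: there is no case split at $\ell=2$ (the apparent degeneracy $\prod(\ell^{f_i}-1)=1$ is absorbed by the exponent bookkeeping), and the argument does not depend on Lemma~\ref{NEWLEMMA3} at all. The cost is a slightly heavier reliance on the structure theory of finite commutative rings and a more delicate count. Your (i)~$\Leftrightarrow$~(ii) is also correct, replacing the paper's elementary-divisor argument over $\Z$ with a split-exact-sequence/Nakayama argument over the local ring $\Z/\ell^a\Z$; these are comparable in difficulty. One small caution: the phrase ``exactly one $g_{i_0}=a$'' degenerates when $a=1$ (all $g_i=1$), but the conclusion that some factor is $\cong\Z/\ell\Z$ still holds, so this is only a matter of wording, not substance.
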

\begin{proof}
(i) $\iff$ (ii): \\
Step 1: Let $\Lambda$ be a free, rank $2$ $\Z$-module, and let $\Lambda'$ be a $\Z$-submodule of $\Lambda$ containing $N\Lambda$.  
By the structure theory of modules over a PID, there is a $\Z$-basis $e_1,e_2$ for $\Lambda$ and positive integers $a \mid b$ 
such that $a e_1, b e_2$ is a $\Z$-basis for $\Lambda'$.  Thus 
\[ \Lambda/\Lambda' \cong \Z/a\Z \oplus \Z/b\Z,  \ \Lambda'/N\Lambda \cong \Z/(N/b) \Z \oplus \Z/(N/a)\Z. \]
It follows that $\Lambda/\Lambda' \cong \Z/N\Z \iff \Lambda'/N\Lambda \cong \Z/N\Z$.  \\
Step 2: If $I$ is an ideal of $\OO$ with $\OO/I \cong \Z/N\Z$, then $I \supset N\OO$, so $I/N\OO \cong \Z/N\Z$ by Step 1.  
Let $M$ be an $\OO$-submodule of $\OO/N\OO$ with underyling $\Z$-module $\Z/N\Z$.  
Then $M = I/N\OO$ for an ideal $I$ of $\OO$, and by Step 1 we have $\OO/I \cong \Z/N\Z$.  \\
(ii) $\implies$ (iii): Let $P \in \OO/N\OO$ have order $N$ such that the subgroup generated by $P$ is an 
$\OO$-submodule.   For all $g \in C_N(\OO)$, $gP = a_g P$ for $a_g \in (\Z/N\Z)^{\times}$.  Conversely, 
since $C_N(\OO)$ contains all scalar matrices, the orbit of $C_N(\OO)$ on $P$ has size $\varphi(N)$. \\
(iii) $\implies$ (ii): Case 1: Suppose $2 \nmid N$ or $\left(\frac{\Delta}{2} \right) \neq 1$.  Let $P \in \OO/N\OO$ be a point of order $N$ with $C_N(\OO)$-orbit of size $\varphi(N)$.  There is a $\Z/N\Z$-basis  $e_1,e_2$ of $\OO/N\OO$ with $e_1 = P$, and our hypothesis gives that 
with respect to this basis $C_N(\OO)$ lies in the subalgebra $\left\{\left[ \begin{array}{cc} a & b \\ 0 & d \end{array} \right] \mid a,b,d \in \Z/N\Z \right\}$ of upper triangular matrices.  By Lemma \ref{NEWLEMMA3}, $\OO/N\OO$ also lies in the subalgebra 
of upper triangular matrices, and thus $\langle P \rangle$ is an $\OO$-stable submodule with underlying $\Z$-module $\Z/N\Z$. \\ \indent
Case 2: Suppose $2 \mid N$ and $\left(\frac{\Delta}{2} \right) = 1$, and write $N = 2^a N'$ with $2 \nmid N'$.  By Lemma \ref{NEWLEMMA2} and the equivalence of (i) and (ii), there is an ideal $I_1$ in $\OO$ with $\OO/I_1 \cong \Z/2^a \Z$, and by 
Case 1 there is an ideal $I_2$ in $\OO$ with $\OO/I_2 \cong \Z/N' \Z$.  By the Chinese remainder theorem, we have $\OO/I_1 I_2 \cong \Z/N\Z$.  Since (i) $\iff$ (ii), this suffices.
\end{proof}

\begin{thm}
\label{SQUAREDISCTHM}
Let $\OO$ be an order of discriminant $\Delta$, and let $N \in \Z^+$.  The following are equivalent: \\
(i) $H(\OO,N)$ holds. \\
(ii) $\Delta$ is a square in $\Z/4N\Z$.
\end{thm}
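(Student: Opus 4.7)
The plan is to use Lemma~\ref{NEWLEMMA4} to reduce condition (i) to the existence of an ideal $I \subset \OO$ with $\OO/I \cong \Z/N\Z$, and then to parametrize such ideals by the roots modulo $N$ of the minimal polynomial of a standard generator of $\OO$.

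First I will observe that the isomorphism in Lemma~\ref{NEWLEMMA4}(i) is automatically a ring isomorphism: any ring $R$ of order $N$ whose additive group is cyclic has characteristic equal to $N$, since the additive order $m$ of $1$ both divides $N$ (by Lagrange) and is divisible by the exponent $N$ of the additive group (every $r \in R$ satisfies $m r = (m\cdot 1) r = 0$). Hence $R$ is generated by $1$ and equals $\Z/N\Z$ as a ring. Moreover every ring homomorphism $\phi \colon \OO \to \Z/N\Z$ is surjective, because its restriction to $\Z \subset \OO$ is the canonical surjection $\Z \twoheadrightarrow \Z/N\Z$. So condition (i) is equivalent to the existence of \emph{some} ring homomorphism $\phi \colon \OO \to \Z/N\Z$.

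Next I will write $\OO = \Z[\tau]$ with $\tau = \tau_\Delta = \frac{\Delta + \sqrt{\Delta}}{2}$ (as in Section 2.1), whose minimal polynomial is
\[
p(x) = x^2 - \Delta x + \frac{\Delta^2 - \Delta}{4} \in \Z[x]
\]
(the constant term is an integer because $\Delta(\Delta-1) \equiv 0 \pmod 4$ when $\Delta \equiv 0, 1 \pmod 4$). Ring homomorphisms $\OO \to \Z/N\Z$ correspond bijectively to elements $\alpha = \phi(\tau) \in \Z/N\Z$ satisfying $p(\alpha) \equiv 0 \pmod N$. Completing the square gives $4 p(\alpha) = (2\alpha - \Delta)^2 - \Delta$, and the value of $(2\alpha - \Delta)^2 \bmod 4N$ is well-defined as a function of $\alpha \in \Z/N\Z$. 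Hence (i) is equivalent to the existence of some $\alpha \in \Z/N\Z$ with $(2\alpha - \Delta)^2 \equiv \Delta \pmod{4N}$.

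Finally I will show this last condition is equivalent to (ii). One direction is trivial. For the converse, if $\Delta \equiv \beta^2 \pmod{4N}$, I will match parities: reducing mod $4$ gives $\beta^2 \equiv \Delta \in \{0,1\} \pmod 4$, which forces $\beta \equiv \Delta \pmod 2$ (even in the first case, odd in the second); then $\alpha \coloneqq (\beta + \Delta)/2 \in \Z$ satisfies $(2\alpha - \Delta)^2 = \beta^2 \equiv \Delta \pmod{4N}$. This parity check is the only place where the special form $\Delta \equiv 0, 1 \pmod 4$ of a discriminant really enters the argument; everything else is formal algebra, and I do not anticipate a serious obstacle.
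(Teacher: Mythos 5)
Your proof is correct, and it takes a genuinely cleaner route than the paper's. Both arguments begin by reducing condition (i), via Lemma~\ref{NEWLEMMA4}, to the existence of a surjective ring homomorphism $\OO \twoheadrightarrow \Z/N\Z$ (your observation that a unital ring of order $N$ with cyclic additive group is forced to be $\Z/N\Z$ as a ring is correct and fills in the implicit content of that lemma). The paper then reduces further to prime powers $N = \ell^a$ via the Chinese Remainder Theorem and Lemma~\ref{NEWLEMMA1}, and splits into three cases: $\ell$ odd, $\ell = 2$ with $\Delta$ odd (further subdivided by the Kronecker symbol, invoking Lemmas~\ref{NEWLEMMA2}a),~b) and Hensel's lemma), and $\ell = 2$ with $\Delta$ even. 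The reason the paper needs this casework is that it uses the presentation $\OO/\ell^a\OO \cong \Z/\ell^a\Z[t]/(t^2 - D)$ with $D = \Delta/4$, which requires $4$ to be invertible modulo $\ell^a$. You instead work directly with $\OO = \Z[x]/(p(x))$ where $p(x) = x^2 - \Delta x + \tfrac{\Delta^2-\Delta}{4}$, identify ring homomorphisms $\OO \to \Z/N\Z$ with roots of $p$ modulo $N$, and use the identity $4p(\alpha) = (2\alpha - \Delta)^2 - \Delta$ to translate ``$p$ has a root mod $N$'' into ``$\Delta$ is a square mod $4N$.'' The two things you need in exchange for avoiding the casework are exactly the two things you check: that $(2\alpha - \Delta)^2 \bmod 4N$ is well-defined as a function of $\alpha \bmod N$, and the parity-matching $\beta \equiv \Delta \pmod 2$ that lets you solve $\beta = 2\alpha - \Delta$ for $\alpha \in \Z$. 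The result is a uniform, case-free argument that buys a real simplification over the published proof.
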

\begin{proof}
Using Lemma \ref{NEWLEMMA1}, we reduce to the case in which $N = \ell^a$ is a 
power of a prime number $\ell$. \\ 
Case 1 ($\ell$ is odd): Since $\gcd(4,\ell^a) = 1$, we may put $D = \frac{\Delta}{4} \in \Z/\ell^a \Z$.  Then $\Delta$ is a square in $\Z/4\ell^a \Z$ iff $D$ is a square in $\Z/\ell^a \Z$, and 
\begin{equation}
\label{QUADORDEREQ}
 \OO/\ell^a \OO \cong (\Z/\ell^a \Z)[t]/(t^2-D). 
\end{equation} 
If there is $s \in \Z/\ell^a \Z$ such that $D = s^2$, then 
\[ \OO/\ell^a \OO \cong (\Z/\ell^a \Z)[t]/((t+s)(t-s)), \]
so if $I$ is the ideal $\langle t+s, \ell^a \rangle$ of $\OO$, then $\OO/I \cong \Z/\ell^a \Z$.  By Lemma \ref{NEWLEMMA4}, $H(\OO,\ell^a)$ holds.  Conversely, suppose $H(\OO,\ell^a)$ holds, so by Lemma \ref{NEWLEMMA4} there is an ideal $I$ of $\OO$ with $\OO/I \cong \Z/\ell^a \Z$.  Since $\ell^a \in I$, we may regard 
$I$ as an ideal of $\OO/\ell^a \OO$ such that $(\OO/\ell^a \OO)/I \cong \Z/\ell^a \Z$.  In other words, we have a $\Z/\ell^a\Z$-algebra 
homomorphism 
\[ f \colon \Z/\ell^a \Z[t]/(t^2-D) \ra \Z/\ell^a \Z. \]
Then $f(t)^2 = D \in \Z/\ell^a \Z$, so $D$ is a square in $\Z/\ell^a\Z$.  \\
Case 2 ($\ell = 2$, $\Delta$ is odd): Then $\left( \frac{\Delta}{\ell} \right) = \pm 1$.  \\
$\bullet$ If $\left( \frac{\Delta}{\ell} \right) = 1$, 
then $\Delta \equiv 1 \pmod{8}$; by Hensel's Lemma, $\Delta$ is a square in $\Z/\ell^a \Z$.  On the other hand, by Lemmas \ref{NEWLEMMA2}a) and \ref{NEWLEMMA4}, $H(\OO,\ell^a)$ holds.  \\
$\bullet$ If $\left( \frac{\Delta}{\ell} \right) = -1$, then $\Delta \equiv 5 \pmod{8}$, so $\Delta$ is not a square modulo $8$ 
and thus not a square modulo $4 \cdot 2^a$.   On the other hand, by Lemma \ref{NEWLEMMA2}b) $H(\OO,\ell^a)$ does not hold.  \\
Case 3: ($\ell = 2$, $\Delta$ is even):  Again we may put $D = \frac{\Delta}{4} \in \Z/\ell^a \Z$, and again (\ref{QUADORDEREQ}) holds.  The argument of Case 1 shows that $H(\OO,\ell^a)$ holds iff $D$ is a square modulo $\Z/\ell^a \Z$ iff $\Delta$ is a square modulo $\Z/4 \ell^a \Z$.
\end{proof}

\begin{prop}
\label{BIGGESTCARTANPROP}
Let $\OO$ be an order, and let $N \in \Z^+$.  The following are equivalent:  \\
(i) $C_N(\OO)$ acts simply transitively on order $N$ elements of $\OO/N\OO$.  \\
(ii) $C_N(\OO)$ acts transitively on order $N$ elements of $\OO/N\OO$.  \\
(iii) For all primes $\ell \mid N$ we have $\left( \frac{\Delta}{\ell} \right) = -1$.
\end{prop}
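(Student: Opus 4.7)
The plan is to prove the cycle (i) $\Rightarrow$ (ii) $\Rightarrow$ (iii) $\Rightarrow$ (i). The implication (i) $\Rightarrow$ (ii) is immediate from the definitions.

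For (iii) $\Rightarrow$ (i), I would reduce to the prime power case. Writing $N = \prod_{\ell \mid N} \ell^{a_\ell}$, Lemma \ref{LASTLEMMA}a) and the Chinese Remainder Theorem yield compatible canonical product decompositions
\[
C_N(\OO) = \prod_{\ell \mid N} C_{\ell^{a_\ell}}(\OO), \qquad \OO/N\OO = \prod_{\ell \mid N} \OO/\ell^{a_\ell}\OO,
\]
in which the Cartan action is componentwise, and an element of $\OO/N\OO$ has order $N$ iff each of its $\ell$-components has order $\ell^{a_\ell}$. Under the hypothesis $\left(\frac{\Delta}{\ell}\right) = -1$ for every $\ell \mid N$, Lemma \ref{NEWLEMMA2}b) gives simple transitivity of each factor $C_{\ell^{a_\ell}}(\OO)$ on order $\ell^{a_\ell}$ elements. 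A product of simply transitive actions is simply transitive, delivering (i).

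For (ii) $\Rightarrow$ (iii), I would argue by cardinalities. Since $\OO/N\OO \cong (\Z/N\Z)^2$ as a $\Z$-module, the number of order $N$ elements is the number of pairs $(a,b) \in (\Z/N\Z)^2$ with $\gcd(a,b,N) = 1$, which equals $N^2 \prod_{\ell \mid N}(1 - \ell^{-2})$. Combining this with the formula for $\#C_N(\OO)$ in Lemma \ref{LASTLEMMA}b), I get
\[
\frac{\#C_N(\OO)}{\#\{\text{order } N \text{ elements of } \OO/N\OO\}} = \prod_{\ell \mid N} \frac{1 - \left(\frac{\Delta}{\ell}\right)/\ell}{1 + 1/\ell}.
\]
Transitivity forces this ratio to be at least $1$, and each factor on the right is $\le 1$ with equality iff $\left(\frac{\Delta}{\ell}\right) = -1$. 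Hence transitivity forces $\left(\frac{\Delta}{\ell}\right) = -1$ for every $\ell \mid N$, proving (iii).

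The argument is essentially bookkeeping on top of the two previous lemmas, and the only mild obstacle is verifying the order-$N$ element count in $(\Z/N\Z)^2$ and simplifying the ratio above; no new ingredient beyond Lemmas \ref{LASTLEMMA} and \ref{NEWLEMMA2} is needed.
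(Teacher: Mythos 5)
Your proof is correct and follows essentially the same approach as the paper: the cardinality comparison for (ii) $\Rightarrow$ (iii) and the local-ring structure (via $(\Delta/\ell) = -1$) for (iii) $\Rightarrow$ (i). The only difference is stylistic: you invoke Lemma \ref{NEWLEMMA2}b) directly for (iii) $\Rightarrow$ (i) where the paper reproves its content inline, and you run the counting argument over all of $N$ at once rather than first reducing to a prime power; both choices are valid and arguably a bit cleaner.
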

\begin{proof}
As usual, we may assume $N = \ell^a$ is a prime power.  Certainly (i) $\implies$ (ii).  \\
(ii) $\implies$ (iii): We have 
\[ \#C_{\ell^a}(\OO) = \ell^{2a-2} (\ell-1)\left(\ell - (\frac{\Delta}{\ell})\right), \]
whereas the number of elements of order $\ell^a$ in $\OO/\ell^a\OO$ is 
\[ N(\OO,\ell^a) \coloneqq \# \OO/\ell^a \OO - \# \ell \OO/\ell^{a} \OO = \ell^{2a-2} (\ell-1)(\ell+1). \]
Transitivity of the action implies $\# C_{\ell^a}(\OO) \geq N(\OO,\ell^a)$, which holds iff $\left( \frac{\Delta}{\ell} \right) = -1$. \\
(iii) $\implies$ (i): Since $\left( \frac{\Delta}{\ell} \right) \neq 0$, we have $\OO/\ell^a \OO \cong 
\OO_K/\ell^a \OO_K$, and thus also $C_{\ell^a}(\OO) = (\OO/\ell^a \OO)^{\times} \cong C_{\ell^a}(\OO_K)$.  Thus $\OO/\ell^a \OO$ is a finite local principal ring with maximal 
ideal $\mm = \langle \ell \rangle$ and unit group $C_{\ell^a}(\OO) = \OO/\ell^a \OO \setminus \mm$.   The set of order 
$\ell^a$ elements of $\OO/\ell^a \OO$ is $\OO/\ell^a\OO \setminus \mm = C_{\ell^a}(\OO)$, so the action of the 
unit group $C_{\ell^a}(\OO)$ on this set is the action of $C_{\ell^a}(\OO)$ on itself, which is simply transitive.
\end{proof}

\begin{cor}
Let $\OO$ an order of conductor $\ff$.  Let $N = \prod_{i=1}^r \ell_i^{a_i} \in \Z^+$ be such that $\left( \frac{\Delta}{\ell_i} \right) = -1$ for all $i$.  Let $F$ be a number field, and let $E_{/F}$ be an $\OO$-CM elliptic curve
such that $E(F)$ has a point of order $N$.  
Then 
\begin{equation}
\label{BIGGESTSPYEQ}
 \# \overline{C_N(\OO)} \mid [FK:K(\ff)]. 
\end{equation}
Moreover, for all $\OO$ and $N$ satisfying the above conditions, equality can occur in (\ref{BIGGESTSPYEQ}).  
\end{cor}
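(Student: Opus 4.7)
The plan is to combine Proposition \ref{BIGGESTCARTANPROP} with the surjectivity of the reduced Galois representation (Corollary \ref{NEWCOR2}). Under the hypothesis $\left(\frac{\Delta}{\ell_i}\right) = -1$ for every $\ell_i \mid N$, Proposition \ref{BIGGESTCARTANPROP} tells us that $C_N(\OO)$ acts simply transitively on the set $X$ of order-$N$ elements of $\OO/N\OO$. Since $q_N(\OO^{\times})$ acts freely on $X$ as a subgroup of $C_N(\OO)$, the quotient group $\overline{C_N(\OO)} = C_N(\OO)/q_N(\OO^{\times})$ then acts simply transitively on the set $\overline{X} = X/q_N(\OO^{\times})$ of $\OO^{\times}$-orbits of order-$N$ elements.

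Now suppose $E_{/F}$ is an $\OO$-CM elliptic curve with $P \in E(F)$ of order $N$. Since $F \supset K$ and $j(E) \in F$, the compositum $FK = F$ contains $K(j(E)) = K(\ff)$. The Weber value $\mathfrak{h}(P)$ lies in $F \subset FK$, and $K(\ff)(\mathfrak{h}(P))$ is the fixed field (inside the Weber function field) of the stabilizer in $\gg_{K(\ff)}$ of the orbit $\overline{P} \in \overline{X}$ under $\overline{\rho_N}$. By Corollary \ref{NEWCOR2}, $\overline{\rho_N} : \gg_{K(\ff)} \to \overline{C_N(\OO)}$ is surjective, and by the simple transitivity established above, the stabilizer of $\overline{P}$ is trivial. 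Hence
\[
[K(\ff)(\mathfrak{h}(P)) : K(\ff)] = \#\overline{C_N(\OO)},
\]
and the tower $K(\ff) \subset K(\ff)(\mathfrak{h}(P)) \subset FK$ yields the divisibility (\ref{BIGGESTSPYEQ}).

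For sharpness, apply Corollary \ref{COR1.2} to produce a number field $F \supset K$ and an $\OO$-CM elliptic curve $E_{/F}$ with $E[N] = E[N](F)$ and $[F : K(\ff)] = \#\overline{C_N(\OO)}$. Such an $E(F)$ obviously contains a point of order $N$, and $FK = F$ since $F \supset K$, so $[FK : K(\ff)] = \#\overline{C_N(\OO)}$, giving equality in (\ref{BIGGESTSPYEQ}).

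The only subtle point is the orbit-counting step: one must verify that simple transitivity of $C_N(\OO)$ on $X$ descends to simple transitivity of $\overline{C_N(\OO)}$ on $\overline{X}$, and that the stabilizer computation under $\overline{\rho_N}$ exactly produces the degree $\#\overline{C_N(\OO)}$; this is routine given Proposition \ref{BIGGESTCARTANPROP}, and no deeper class field theory beyond what is already in Theorem \ref{MAINTHM} is needed.
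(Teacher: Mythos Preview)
Your argument is correct and follows essentially the same route as the paper: both hinge on Proposition \ref{BIGGESTCARTANPROP} for the key transitivity, on the surjectivity of the reduced mod $N$ representation (Corollary \ref{NEWCOR2}/Theorem \ref{MAINTHM}), and on Corollary \ref{COR1.2} for sharpness.

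Two small points. First, you assert ``$F \supset K$'', but the statement does not assume this; the paper simply replaces $F$ by $FK$ at the outset, after which $FK \supset K(\ff)$ and the rest of your argument goes through verbatim. Second, the paper phrases the main step a bit more directly than you do: rather than descending the simple transitivity to $\overline{C_N(\OO)}$ and computing $[K(\ff)(\mathfrak{h}(P)):K(\ff)]$ via orbit--stabilizer, it observes that transitivity of $C_N(\OO)$ on order-$N$ points forces the $\OO$-submodule generated by $P$ to be all of $E[N]$, so $\rho_N|_{\gg_{FK}}$ is already trivial and $FK \supset K(\ff)(\mathfrak{h}(E[N]))$, whence Theorem \ref{MAINTHM} gives the divisibility immediately. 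Your version is equivalent but slightly more roundabout.
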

\begin{proof}
Replace $F$ by $FK$; then $F \supset K(\ff)$.  By Proposition \ref{BIGGESTCARTANPROP}, 
$C_N(\OO)$ acts transitively on order $N$ elements of $\OO/N\OO$, so the $\OO$-submodule generated by any one of them is $\OO/N\OO$.    Thus the existence of one $F$-rational point of order $N$ implies that $\rho_N$ is trivial.   Applying Theorem \ref{MAINTHM} gives (\ref{BIGGESTSPYEQ}).  That equality 
can occur follows from Corollary \ref{1.1C}.
\end{proof}

\subsection{Torsion over $K(j)$: Part I}
Let $\OO$ be an order of discriminant $\Delta = \ff^2 \Delta_K$.  We will give a complete classification of the possible torsion subgroups of $\OO$-CM elliptic curves $E_{/K(\ff)}$.  In this section we will treat the cases $\Delta \neq -3,-4$.  For the remaining cases we will make use of Theorem \ref{BIGONE}, so we will come back to those cases in $\S$7.5.
\\ \\
If $E(K(\ff))$ has a point of order $N$, 
then since $[C_N(\OO):\rho_N(\gg_{K(\ff)})] \mid \# \OO^{\times}$, there must be some $P \in \OO/N\OO$ of order $N$ with a $C_N(\OO)$-orbit of order dividing $\# \OO^{\times}$.  
\\ \\
$\bullet$ By Theorem \ref{SPY}, if $E(K(\ff))$ has a point of order $N$, then $\varphi(N) \mid 2$, so 
\[N \in \{1,2,3,4,6\}. \]
$\bullet$ Lemma \ref{LASTLEMMA}b) implies that for all $N \geq 3$, we have $\# C_N(\OO) \geq 4$ (equality holds 
if $N = 3$ and $\Delta \equiv 1 \pmod{3}$).  By Theorem \ref{MAINTHM} we cannot have $E[N] = E[N](K(\ff))$.  
\\ \\
Thus $E(K(\ff))[\tors]$ is isomorphic to one of the groups in the following list: 
\\ 
\[ \{e\}, \Z/2\Z, \Z/3\Z, \Z/4\Z, \Z/6\Z, \Z/2 \Z \times \Z/2 \Z, \Z/2\Z \times \Z/4\Z, \Z/2\Z \times \Z/6\Z. \]
We will show that all of these groups occur.
\\ \\
\textbf{Points of order 2}: By Theorem \ref{MAINTHM}, $E(K(\ff))[2]$ has order $4$ if 
$2$ splits in $\OO$, order $2$ if $2$ ramifies in $\OO$ and order $1$ if $2$ is inert in $\OO$.  Thus:
\[ E(K(\ff))[2] \cong 
\begin{cases}
\{e\} & \Delta \equiv 5 \pmod{8} \\
 \Z/2\Z      & \Delta \equiv 0 \pmod{4} \\
\Z/2\Z \times \Z/2\Z & \Delta \equiv 1 \pmod{8}
\end{cases}. \]
\textbf{Points of order 3, 4, or 6}: Let $E_{/K(\ff)}$ be any $\OO$-CM elliptic curve.  We claim that for $N \in \{3,4,6\}$, there is a 
quadratic twist $E^D$ of $E$ such that $E^D(K(\ff))$ has a point of order $N$ iff $H(\OO,N)$ holds.  Indeed, as above, 
since the index of the mod $N$ Galois representation in $C_N(\OO)$ divides $2$, if some $E^D(K(\ff))$ has a point of order $N$, 
then $\O/N\OO$ has a point of order $N$ with a $C_N(\OO)$-orbit of size $2$.  Since $\varphi(N) = 2$, there is a 
Cartan orbit of size $2$ iff $H(\OO,N)$ holds.  Conversely, if $H(\OO,N)$ holds then there is a point of order $N$ with a 
$C_N(\OO)$-orbit of size $2$, hence on some quadratic twist $E^D$ we have an $F$-rational point of order $N$.  Applying 
Theorem \ref{SQUAREDISCTHM}, we get: 
\\  \\
$\bullet$ Some $\OO$-CM $E_{/K(\ff)}$ has a point of order $3$ iff $\Delta \equiv 0,1 \pmod{3}$.  \\
$\bullet$ Some $\OO$-CM $E_{/K(\ff)}$ has a point of order $4$ iff $\Delta \equiv 0,1,4,9 \pmod{16}$. \\
$\bullet$ Some $\OO$-CM $E_{/K(\ff)}$ has a point of order $6$ iff $\Delta \equiv 0,1,2,9,12,16 \pmod{24}$.  
\\ \\ 
Because the only full $N$-torsion we can have is full $2$-torsion, and $2$-torsion is invariant under quadratic 
twists, we immediately deduce the complete answer in all cases.
\\ \\
$\bullet$ If $\Delta \equiv 0 \pmod{48}$, then there are twists $E_1,E_2,E_3$ of $E$ with 
\[ E_1(K(\ff))[\tors] \cong \Z/2\Z, \ E_2(K(\ff))[\tors] \cong \Z/4\Z, \ E_3(K(\ff))[\tors] \cong \Z/6\Z. \]
$\bullet$ If $\Delta \equiv 1,9,25,33 \pmod{48}$ then there are twists $E_1,E_2$ of $E$ with 
\[ E_1(K(\ff))[\tors] \cong \Z/2\Z \times \Z/2\Z, \  E_2(K(\ff))[\tors] \cong \Z/2\Z \times \Z/6\Z. \]
$\bullet$ If $\Delta \equiv 4,16,36 \pmod{48}$, then there are twists $E_1,E_2,E_3$ of $E$ with 
\[ E_1(K(\ff))[\tors] \cong \Z/2\Z, \ E_2(K(\ff)) \cong \Z/4\Z, \ E_3(K(\ff)) \cong \Z/6\Z. \]
$\bullet$ If $\Delta \equiv 5,29 \pmod{48}$, then $E(K(\ff))[\tors] = \{e\}$. \\
$\bullet$ If $\Delta \equiv 8,44 \pmod{48}$, then $E(K(\ff))[\tors] = \Z/2\Z$. \\
$\bullet$ If $\Delta \equiv 12,24,28,40 \pmod{48}$, then there are twists $E_1,E_2$ of $E$ with 
\[ E_1(K(\ff))[\tors] \cong \Z/2\Z, \ E_2(K(\ff)) \cong \Z/6\Z. \]
$\bullet$ If $\Delta \equiv 13,21,37,45 \pmod{48}$, then there are twists $E_1,E_2$ of $E$ with 
\[E_1(K(\ff))[\tors]  = \{e\}, \ E_2(K(\ff))[\tors] \cong \Z/3\Z. \]
$\bullet$ If $\Delta \equiv 17,41 \pmod{48}$, then 
there are twists $E_1,E_2$ of $E$ with 
\[ E_1(K(\ff))[\tors] \cong \Z/2\Z \times \Z/2\Z, \ E_2(K(\ff))[\tors] \cong \Z/2\Z \times \Z/4\Z. \]
$\bullet$ If $\Delta \equiv 20,32 \pmod{48}$, then there are twists $E_1,E_2$ of $E$ with 
\[ E_1(K(\ff))[\tors] \cong \Z/2\Z, \ E_2(K(\ff)) \cong \Z/4\Z. \]

\subsection{Isogenies over $K(j)$: Part I}

\begin{thm}
\label{KJISOGTHM}
Let $\OO$ be an order of discriminant $\Delta = \ff^2 \Delta_K$, and let $N \in \Z^+$.   \\
a) If $\Delta \neq -3,-4$, then there is an $\OO$-CM elliptic curve $E_{/K(\ff)}$ with a $K(\ff)$-rational cyclic $N$-isogeny 
iff $\Delta$ is a square in $\Z/4N\Z$.  \\
b) If $\Delta = -4$, then then there is an $\OO$-CM elliptic curve $E_{/K(\ff)}$ with a $K(\ff)$-rational cyclic $N$-isogeny 
iff $N$ is of the form $2^{\epsilon} \ell_1^{a_1} \cdots \ell_r^{a_r}$ for primes $\ell_i \equiv 1 \pmod{4}$ and $\epsilon,a_1,\ldots,a_r \in \mathbb{N}$ with $\epsilon \leq 2$.  \\
c) If $\Delta = -3$, then there is an $\OO$-CM elliptic curve $E_{/K(\ff)}$ with a $K(\ff)$-rational cyclic $N$-isogeny 
iff $N$ is of the form $2^{\epsilon} 3^a \ell_1^{a_1} \cdots \ell_r^{a_r}$ for primes $\ell_i \equiv 1 \pmod{3}$,
$\epsilon,a,a_1,\ldots,a_r \in \mathbb{N}$ with $(\epsilon,a) \in \{ (0,0), \ (0,1), \ (0,2), \ (1,0), \ (1,1)\}$.  
\end{thm}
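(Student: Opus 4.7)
The Chinese Remainder Theorem gives $\OO/N\OO \cong \prod_i \OO/\ell_i^{a_i}\OO$ for $N = \prod_i \ell_i^{a_i}$, under which a cyclic subgroup of order $N$ corresponds to a tuple of cyclic subgroups of orders $\ell_i^{a_i}$, with Galois-stability preserved componentwise. I would reduce to prime powers on this basis, noting that for $\Delta \neq -3, -4$ existence at distinct primes is independent of the chosen twist, while for $\Delta \in \{-3,-4\}$ the prime components are coupled through a single $\OO^\times$-valued twist -- the source of the forbidden combination in part (c). For part (a), since $\OO^\times = \{\pm 1\}$ consists of $\Z$-scalars in $C_N(\OO)$, these preserve every cyclic subgroup of order $N$; by Corollary \ref{NEWCOR2}, $\rho_N(\gg_{K(\ff)}) \cdot q_N(\OO^\times) = C_N(\OO)$ for any $\OO$-CM elliptic curve $E_{/K(\ff)}$, so a cyclic $C$ of order $N$ is $\rho_N$-stable iff it is $C_N(\OO)$-stable, independently of $E$. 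By Lemma \ref{NEWLEMMA4} and Theorem \ref{SQUAREDISCTHM}, such $C$ exists iff $H(\OO,N)$ holds iff $\Delta$ is a square in $\Z/4N\Z$.

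For parts (b) and (c), $\OO^\times$ contains roots of unity acting nontrivially on cyclic subgroups, so stability depends on the specific twist, and I would analyze each prime $\ell$ according to its splitting type in $\OO_K$. Split primes $(\ell) = \pp \bar\pp$: $E[\pp^a]$ is a $K$-rational cyclic subgroup of order $\ell^a$ for every $a$ by $\OO$-stability (it is cyclic as a $\Z$-module because $\OO/\pp^a$ has annihilator $(\ell^a)$ in $\Z$), giving no restriction. Inert primes: the stabilizer in $C_\ell(\OO)$ of any cyclic order-$\ell$ subgroup has size $\ell - 1$, which is strictly smaller than $|C_\ell(\OO)|/\#\OO^\times$ except for the small pair $(K,\ell) = (\Q(\sqrt{-3}),2)$; there a direct check using $q_2(\mu_6) = \F_4^\times$ shows the reduced mod-$2$ Cartan is trivial, so a twist with $\rho_2 = \{1\}$ exists and yields a rational $2$-isogeny, while the analogous computation at level $4$ fails (the stabilizer of every cyclic order-$4$ subgroup of $\OO_K/4$ has size $2$ but does not surject onto the order-$2$ reduced Cartan). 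Ramified primes: $E[\pp]$ is always $K$-rational, and for higher powers I would enumerate cyclic $\Z/\ell^a$-subgroups of $\OO_K/\ell^a$ and their stabilizers in $C_{\ell^a}(\OO_K)$, checking surjectivity of the stabilizer onto $\overline{C_{\ell^a}(\OO_K)}$; this yields $\epsilon \leq 2$ for $\Delta = -4$ (realized by $\langle 1+i \rangle \subset \Z[i]/4$) and $a \leq 2$ for $\Delta = -3$ at $\ell = 3$, with $N = 8$ and $N = 27$ excluded by a stabilizer-index computation.

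The main obstacle is the joint-twist compatibility needed for part (c) forbidding $(\epsilon,a) = (1,2)$: the twist $\chi: \gg_K \to \mu_6$ trivializing $\rho_2$ is determined only up to the kernel $\{\pm 1\}$ of $q_2: \mu_6 \to \F_4^\times$, and I would verify that no such $\chi$ simultaneously forces $\rho_9$ into the stabilizer of a cyclic order-$9$ subgroup of $\OO_K/9$. Tracking a single $\mu_6$-valued character through its reductions at $2$ and $9$ is the most technical step and, combined with the ramified-prime stabilizer enumeration, completes the classification.
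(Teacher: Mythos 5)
Your treatment of part (a) coincides with the paper's: for $\Delta \neq -3,-4$ the projective (equivalently, reduced) mod-$N$ Galois representation is surjective onto $C_N(\OO)/(\Z/N\Z)^{\times}$, so $K(\ff)$-rational cyclic $N$-isogenies correspond exactly to $C_N(\OO)$-orbits of minimal size $\varphi(N)$ on order-$N$ points of $\OO/N\OO$, and Theorem \ref{SQUAREDISCTHM} together with Lemma \ref{NEWLEMMA4} finishes it.

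For parts (b) and (c), the paper defers the argument until the Torsion Degree Theorem (Theorem \ref{BIGONE}) is available and then runs the ``$\widetilde{T}$-argument'' of $\S 7.6$: since the projective twisting character takes values in $\mu_K/\{\pm 1\}$, of order $w_K/2$ rather than $w_K$, the index of $\PP\rho_N(\gg_K)$ in the projective Cartan divides $w_K/2$, and so $\widetilde{T}(\OO,N) > \varphi(N)\cdot\frac{w_K}{2}$ already rules out a $K$-rational cyclic $N$-isogeny; the multiplicativity of $\widetilde{T}$ over prime powers turns this into a one-line test for every $N$. Your prime-by-prime stabilizer comparison has the same spirit but uses the wrong normalization: you compare the stabilizer size $\ell-1$ against $\#C_\ell(\OO)/\#\OO^{\times}$ rather than comparing the projective orbit size $\widetilde{T}(\OO,\ell)/\varphi(\ell)$ against $\frac{w_K}{2}$, so your bound is a factor of $2$ weaker. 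Concretely, your claim that $\ell-1 < \#C_\ell(\OO)/\#\OO^{\times}$ holds for every inert $\ell$ ``except $(\Q(\sqrt{-3}),2)$'' is false: equality holds for $(\Q(\sqrt{-1}),3)$ and for $(\Q(\sqrt{-3}),5)$, and your argument as written does not exclude those primes even though the theorem requires it (they are $\not\equiv 1 \pmod 4$, resp.\ $\not\equiv 1\pmod 3$). The paper's version does rule them out: for $\Delta = -4$, $\ell = 3$ we get $\widetilde{T}(\OO,3)/(\varphi(3)\cdot 2) = 8/4 = 2 > 1$, and for $\Delta = -3$, $\ell = 5$ we get $24/(4\cdot 3) = 2 > 1$.

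There are further gaps. The cyclic subgroup $\langle 1+i\rangle \subset \Z[i]/4\Z[i]$ you cite as realizing the $K$-rational $4$-isogeny for $\Delta = -4$ is not an $\OO$-submodule (e.g.\ $i(1+i) = -1+i \equiv 3+i \notin \langle 1+i\rangle \pmod 4$), so that it is Galois-stable after some twist is a nontrivial claim you do not establish; the paper instead reads off $T(\OO,4)=1$ from Theorem \ref{BIGONE}, which produces a $K$-rational \emph{point} of order $4$ and hence \emph{a fortiori} a cyclic $4$-isogeny. Finally, the exclusions of $N = 8$ (for $\Delta = -4$), $N = 27$, and the coupled case $N = 18$ (for $\Delta = -3$) are announced as ``a stabilizer-index computation'' and a ``joint-twist compatibility'' verification but are not actually carried out; the paper does each in one line via $\widetilde{T}(\OO,8)/(\varphi(8)\cdot 2) = 2 > 1$, $\widetilde{T}(\OO,27)/(\varphi(27)\cdot 3) = 3 > 1$, and $\widetilde{T}(\OO,18)/(\varphi(18)\cdot 3) = 3 > 1$. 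As it stands, parts (b) and (c) of your proposal remain a plan rather than a proof, and the two missed inert-prime equalities mean the plan is not merely incomplete but would, if followed literally, fail to exclude $3 \mid N$ when $\Delta = -4$ and $5 \mid N$ when $\Delta = -3$.
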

\begin{proof}
Step 1: Let $E_{/K(\ff)}$ be an $\OO$-CM elliptic curve.  If $\Delta$ is a square in $\Z/4N\Z$, then by Theorem \ref{SQUAREDISCTHM} 
there is a point $P$ of order $N$ in $\OO/N\OO$ such that $C = \langle P \rangle$ is invariant under $C_N(\OO)$, so $C$ 
is $\gg_{K(\ff)}$-stable and $E \ra E/C$ is a cyclic $N$-isogeny.  If $\Delta \notin \{-4,-3\}$, then the projective Galois representation $\PP \rho_N \colon \gg_{K(\ff)} \ra C_N(\OO)/(\Z/N\Z)^{\times}$
is a quotient of the reduced Galois representation, hence surjective.  So $K(\ff)$-rational cyclic $N$-isogenies correspond to $C_N(\OO)$-orbits on $\OO/N\OO$ of size $\varphi(N)$, 
which by Theorem \ref{SQUAREDISCTHM} exist iff $\Delta$ is a square in $\Z/4N\Z$.  \\
Step 2: If $\Delta \in \{-4,-3\}$, then as above the condition that $\Delta$ is a square modulo $4N$ is 
sufficient for the existence of a $K(\ff)$-rational cyclic $N$-isogeny, but it is no longer clear that it is necessary, and in both 
cases it turns out not to be.  The complete analysis will make use of Theorem \ref{BIGONE}, so we defer the end of the proof 
until $\S$7.6.
\end{proof}

\section{The Torsion Degree Theorem}

\subsection{Statement and Preliminary Reduction}
Throughout this section $\OO$ denotes an order of conductor $\ff$ and discriminant $\Delta = \ff^2 \Delta_K$.  
\\ \\
For $N \in \Z^{\geq 2}$, let 
$\widetilde{T}(\OO,N)$ be the least size of an orbit of $C_N(\OO)$ on an order $N$ point of $\OO/N\OO$.

\begin{lemma}
\label{LAST2LEMMA}
We have $\widetilde{T}(\OO,2) = \begin{cases} 1 & \text{if }  \left( \frac{\Delta}{2}\right) \neq -1\\ 3 & \text{if } \left( \frac{\Delta}{2} \right) = -1 
\end{cases}$.
\end{lemma}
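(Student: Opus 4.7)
The plan is a direct case analysis according to the value of $\left(\frac{\Delta}{2}\right)\in\{-1,0,1\}$, i.e., according to the splitting behavior of $2$ in $\OO$. In each case I will write down $\OO/2\OO$ explicitly as a commutative ring, identify its unit group $C_2(\OO)$, and then inspect the orbits of this unit group acting by multiplication on the order-$2$ elements of $\OO/2\OO$, which are exactly the nonzero elements.

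First, when $\left(\frac{\Delta}{2}\right) = 1$, the prime $2$ splits and $\OO\otimes\Z_2 \cong \Z_2\times\Z_2$, so $\OO/2\OO\cong\F_2\times\F_2$. Then $C_2(\OO) = (\F_2\times\F_2)^\times = \{(1,1)\}$ is trivial (as is also confirmed by Lemma \ref{LASTLEMMA}b)), so every orbit is a singleton and hence $\widetilde T(\OO,2) = 1$. When $\left(\frac{\Delta}{2}\right) = 0$, the prime $2$ ramifies, and $\OO/2\OO$ is a local ring of order $4$ with residue field $\F_2$; concretely it is isomorphic to $\F_2[\varepsilon]/(\varepsilon^2)$ for a generator $\varepsilon$ of the maximal ideal. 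Its unit group is $\{1,\,1+\varepsilon\}$, and the three nonzero elements are $1,\varepsilon,1+\varepsilon$. Since $(1+\varepsilon)\cdot\varepsilon = \varepsilon+\varepsilon^2 = \varepsilon$, the singleton $\{\varepsilon\}$ is an orbit, so again $\widetilde T(\OO,2) = 1$.

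Finally, when $\left(\frac{\Delta}{2}\right) = -1$, the prime $2$ is inert and $\OO/2\OO\cong \F_4$. The order-$2$ elements are the three nonzero elements, namely $\F_4^\times$, on which $C_2(\OO) = \F_4^\times$ acts by multiplication on itself, hence simply transitively. The unique orbit has size $3$, and this is therefore $\widetilde T(\OO,2)$.

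These three computations together give exactly the stated formula. The case analysis is essentially mechanical; the only mildly subtle point is noting that in the ramified case the nilpotent element $\varepsilon$ yields a fixed orbit (even though $\varepsilon$ does have order $2$ as an element of the additive group), which is what drives $\widetilde T(\OO,2) = 1$ rather than $2$ in that case.
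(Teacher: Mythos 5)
Your proof is correct, but it takes a genuinely different route from the paper. The paper proves this lemma by invoking Theorem \ref{SQUAREDISCTHM} (the characterization of $H(\OO,N)$ in terms of $\Delta$ being a square mod $4N$): since $\Delta$ is a square mod $8$ precisely when $\left(\frac{\Delta}{2}\right)\neq -1$, that theorem immediately says an orbit of size $\varphi(2)=1$ exists exactly in that case, and in the inert case the observation $\#C_2(\OO)=3$ (prime) plus Lagrange forces the unique remaining orbit size to be $3$. You instead do a bare-hands computation, writing out $\OO/2\OO$ explicitly as $\F_2\times\F_2$, $\F_2[\varepsilon]/(\varepsilon^2)$, or $\F_4$ according to the splitting type, identifying the unit group in each, and enumerating orbits directly. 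Your version is more self-contained and elementary (it does not depend on the machinery of \S 6.5, i.e.\ Lemmas \ref{NEWLEMMA1}--\ref{NEWLEMMA4} and Theorem \ref{SQUAREDISCTHM}), and it makes the structure of the orbits concrete; the paper's version is shorter given that \ref{SQUAREDISCTHM} is already available and is consistent with how all the other cases of the Torsion Degree Theorem are handled. One point worth making explicit in your argument, which you use implicitly, is that when $\left(\frac{\Delta}{2}\right)=\pm 1$ the discriminant $\Delta$ is odd, hence $2\nmid\ff$, so $\OO\otimes\Z_2=\OO_K\otimes\Z_2$ and the ring $\OO/2\OO$ really is $\F_2\times\F_2$ or $\F_4$ as claimed; in the ramified case $\Delta\equiv 0\pmod 4$ one checks locality directly as you do.
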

\begin{proof}
By Theorem \ref{SQUAREDISCTHM}, we have $\left(\frac{\Delta}{2}\right) \neq -1$ iff there is a $C_2(\OO)$-orbit 
of size $\varphi(2) = 1$ on $\OO/2\OO$ iff $\widetilde{T}(\OO,2) = 1$.  In the remaining case $\left(\frac{\Delta}{2} \right) = -1$
we have $\# C_2(\OO) =3$ and no orbit of size $1$, hence $\widetilde{T}(\OO,2) = 3$.
\end{proof}

\begin{thm}
\label{BIGONE}(Torsion Degree Theorem)
Let $\OO$ be an order of conductor $\ff$, and let $N \in \Z^{\geq 3}$.  \\
a) There is $T(\OO,N) \in \Z^+$ such that: \\
(i) if $F \supset K(\ff)$ is a number field and $E_{/F}$ is an $\OO$-CM elliptic curve with an $F$-rational point of 
order $N$, then $T(\OO,N) \mid [F:K(\ff)]$, and \\
(ii) there is a number field $F \supset K(\ff)$ with $[F:K(\ff)] = T(\OO,N)$ and an $\OO$-CM elliptic curve 
$E_{/F}$ with an $F$-rational point of order $N$.  \\
b) If $(\Delta,N) = (-3,3)$, then $T(\OO,N) = 1$.  \\
c) Suppose $(\Delta,N) \neq (-3,3)$.  Let $N = \ell_1^{a_1} \cdots \ell_r^{a_r}$ 
be the prime power decomposition of $N$.  Then 
\[ T(\OO,N) = \frac{\prod_{i=1}^r \widetilde{T}(\OO,\ell_i^{a_i})}{\# \OO^{\times}}. \]
d) If $\ell^a = 2$, then $\widetilde{T}(\OO,\ell^a) = 2$ is computed in Lemma \ref{LAST2LEMMA}.  If 
$\ell^a>2$, then $\widetilde{T}(\OO,\ell^a)$ is as follows, where $k=\ord_\ell(\ff)$:
\begin{enumerate}
\item If $\ell \nmid \ff$, then
 $\widetilde{T}(\OO,\ell^a) = \begin{cases} \ell^{a-1}(\ell-1) & \text{if } \left( \frac{\Delta}{\ell}\right) = 1,\\ 
\ell^{2a-2}(\ell-1) 
& \text{if } \left( \frac{\Delta}{\ell} \right) = 0, \\ \ell^{2a-2}(\ell^2-1) &  \text{if }\left( \frac{\Delta}{\ell} \right) = -1.
\end{cases}$
\vspace{.2cm}
\item If $\ell \mid \ff$, then 
 $\widetilde{T}(\OO,\ell^a) = \begin{cases} \ell^{a-1} (\ell-1) & \text{if } \left( \frac{\Delta_K}{\ell}\right) = 1,\\  \ell^{a-1}(\ell-1) & \text{if } \left( \frac{\Delta_K}{\ell}\right) = -1 \text{ and } a \leq 2k,\\ \ell^{2a-2k-1}(\ell-1) &  \text{if } \left( \frac{\Delta_K}{\ell}\right) = -1 \text{ and } a > 2k, \\ \ell^{a-1}(\ell-1) & \text{if } \left( \frac{\Delta_K}{\ell}\right) = 0 \text{ and } a \leq 2k+1, \\ 
\ell^{2a-2k-2}(\ell-1)
 & \text{if } \left( \frac{\Delta_K}{\ell}\right) = 0 \text{ and } a > 2k+1.
\end{cases}$
\end{enumerate}
\end{thm}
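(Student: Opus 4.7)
The strategy converts the degree question into an enumeration of orbits of the reduced Cartan $\overline{C_N(\OO)}$ on order-$N$ elements of $\OO/N\OO$. Fix an $\OO$-module isomorphism $E[N] \cong \OO/N\OO$ from Lemma \ref{INVLEMMA1}; an $F$-rational point of order $N$ corresponds to an order-$N$ element $P \in \OO/N\OO$ with $\rho_N(\gg_F) \subseteq \mathrm{Stab}_{C_N(\OO)}(P)$, and its class $\overline{P} \in (\OO/N\OO)/q_N(\OO^\times)$ is fixed by $\overline{\rho_N}(\gg_F)$. Corollary \ref{COR1.3} identifies $\Gal(K(\ff)(\hh(E[N]))/K(\ff))$ with $\overline{C_N(\OO)}$, so for any $F \supset K(\ff)$ we obtain the divisibility $|\overline{C_N(\OO)} \cdot \overline{P}| \mid [F:K(\ff)]$. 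Conversely, take $E_{/K(\ff)}$ with $\rho_N(\gg_{K(\ff)}) = C_N(\OO)$ (Corollary \ref{LargeTwistCor}); the fixed field $F$ of $\mathrm{Stab}_{\overline{C_N(\OO)}}(\overline{P}) \subseteq \overline{C_N(\OO)}$ satisfies $[F:K(\ff)] = |\overline{C_N(\OO)} \cdot \overline{P}|$, and the image $\rho_N(\gg_F)$ lies in the preimage $q_N(\OO^\times) \cdot \mathrm{Stab}_{C_N(\OO)}(P)$. A twist of $E$ over $F$ by a suitable character into $\mu_{\#\OO^\times}$, constructed as in the proof of Theorem \ref{WFP}, trims $\rho_N(\gg_F)$ to land inside $\mathrm{Stab}_{C_N(\OO)}(P)$, making $P$ rational on the twist.

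Define $T(\OO, N) := \min_P |\overline{C_N(\OO)} \cdot \overline{P}|$ over order-$N$ elements. An orbit-stabilizer computation yields
\[
|\overline{C_N(\OO)} \cdot \overline{P}| = \frac{|C_N(\OO) \cdot P| \cdot \nu(P)}{\#\OO^\times},
\]
where $\nu(P) := |\{u \in \OO^\times : uP = P\}|$. For $u \in \OO^\times \setminus \{1\}$ with $u - 1 \neq 0$, $uP = P$ places $P$ in $E[u-1]$; a direct check over the finitely many possible $u$ shows that $E[u-1]$ has $\Z$-exponent $2$ for $u \in \{-1, \pm i\}$, $3$ for $u \in \{\zeta_3, \zeta_3^2\}$, and is zero whenever $u - 1$ is a unit. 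Thus for $N \geq 3$ and $(\Delta, N) \neq (-3, 3)$, $\nu(P) = 1$ for every order-$N$ point, giving $T(\OO, N) = \widetilde T(\OO, N)/\#\OO^\times$; the Chinese Remainder Theorem decomposes $\widetilde T(\OO, N)$ as the product over prime powers dividing $N$, proving (c). For $(\Delta, N) = (-3, 3)$, $q_3 \colon \OO^\times \to C_3(\OO)$ is a bijection, so $\overline{C_3(\OO)}$ is trivial and $T(\OO, 3) = 1$, proving (b).

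To compute $\widetilde T(\OO, \ell^a)$, we split by the behavior of $\ell$ relative to $\OO$. When $\ell \nmid \ff$, $\OO \otimes \Z_\ell$ equals $\OO_K \otimes \Z_\ell$, which is $\Z_\ell \times \Z_\ell$, a totally ramified quadratic extension, or the unramified quadratic extension according to $\left(\frac{\Delta}{\ell}\right)$; the minimum orbit on order-$\ell^a$ elements is attained at $(1, 0)$, at the uniformizer $\pi$, or at any unit respectively, with Proposition \ref{BIGGESTCARTANPROP} providing the single simply transitive orbit in the inert case, and the sizes computed directly. When $\ell \mid \ff$ with $k = \ord_\ell(\ff) \geq 1$, write $\OO \otimes \Z_\ell = \Z_\ell + \ell^k(\OO_K \otimes \Z_\ell)$ and enumerate $\OO$-submodules and Cartan stabilizers of order-$\ell^a$ elements directly, splitting by the splitting type of $\ell$ in $\OO_K$ and by whether $a$ lies at or below, or strictly above, the threshold $2k$ (respectively $2k+1$ in the ramified subcase) at which the conductor becomes visible modulo $\ell^a$. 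In each sub-case we also verify that every orbit size is a multiple of the claimed minimum, ensuring that $T(\OO, N)$ divides every realized $[F:K(\ff)]$ rather than merely being a lower bound for it, which secures (a)(i).

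The principal obstacle is the enumeration when $\ell \mid \ff$. Here $\OO \otimes \Z_\ell$ is a nonmaximal local order whose lattice of $\OO$-submodules in $\OO/\ell^a \OO$ is strictly richer than the $\OO_K$-submodule lattice, and the Cartan orbit of a point depends sensitively on the $\OO$-submodule it generates as well as on the position of $P$ in the filtration by powers of $\ell^k(\OO_K \otimes \Z_\ell)$. Below the threshold, one can find points $P$ whose $\OO$-orbit sits inside an $\OO_K$-submodule and the minimum coincides with the maximal-order case; above the threshold, the conductor obstructs this and forces the extra factors $\ell^{a - 2k - 1}$ or $\ell^{a - 2k - 2}$ appearing in (d)(2). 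Carrying out this bookkeeping case by case, and checking the divisibility compatibility of all orbit lengths with the stated minimum so that $T(\OO, N)$ functions as a gcd and not merely a minimum, is the technical heart of the proof.
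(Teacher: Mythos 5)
Your proposal takes essentially the same route as the paper: reduce to computing the least size of a reduced Cartan orbit on order-$N$ points, use Corollary \ref{COR1.3} and the field-of-moduli interpretation for the lower bound and a model over that field for the upper bound, then reduce to prime-power $N$ via CRT and split into cases according to $\ord_\ell(\ff)$ and the splitting behavior of $\ell$. Your orbit-stabilizer identity $|\overline{C_N(\OO)} \cdot \overline{P}| = |C_N(\OO)\cdot P|\cdot\nu(P)/\#\OO^\times$ and the verification that $\nu(P)=1$ for $N\geq 3$, $(\Delta,N)\neq(-3,3)$ (by checking the $\Z$-exponents of $E[u-1]$ over the finitely many units) is a clean, more elementary replacement for the paper's Lemma \ref{CARTANREDUCTIONLEMMA}, which cites the ray class unit computations of Lemma \ref{CFTLEMMA}; those two verifications are equivalent in substance.

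However, there is a genuine gap: you only outline, and explicitly do not execute, the computation of $\widetilde T(\OO,\ell^a)$ in the case $\ell\mid\ff$, which is where the content of part (d)(2) lives. The paper handles this with two structural observations that your sketch does not isolate. First, $\OO/\ell^a\OO$ is a local ring with residue field $\F_\ell$ whenever $\ell\mid\ff$, so for any order-$\ell^a$ point $P$ the quotient $\OO/I_P$ is local with residue field $\F_\ell$, and since $\OO/I_P\cong_{\Z}\Z/\ell^a\Z\oplus\Z/\ell^b\Z$ one gets $\#(\OO/I_P)^\times=\ell^{a+b-1}(\ell-1)$; thus \emph{all} orbit sizes are of the form $\ell^c(\ell-1)$, which immediately gives the divisibility needed for (a)(i) without any case checking. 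Second, the threshold at which the minimal orbit size starts growing by a factor $\ell^2$ per lift is governed precisely by whether $H(\OO,\ell^a)$ holds, and the paper computes this threshold by reducing to the criterion of Theorem \ref{SQUAREDISCTHM} (whether $\Delta$ is a square modulo $4\ell^a$) together with the lifting argument comparing $\OO/I_P\cong\Z/\ell^a\Z\oplus\Z/\ell^b\Z$ with $\OO/I_Q\cong\Z/\ell^{a+1}\Z\oplus\Z/\ell^{b'}\Z$ for $\ell Q=P$. Your ``direct enumeration of $\OO$-submodules and Cartan stabilizers'' is in principle possible but is not carried out, and without the local-ring observation you would also need to separately verify the divisibility claims in (a)(i) for each subcase. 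A secondary, smaller issue: the twisting step used to realize $P$ over $F$ in your proof of (a)(ii) requires showing the putative character $\sigma\mapsto u_\sigma^{-1}$ (extracting the $\OO^\times$-part from $\rho_N(\sigma)\in q_N(\OO^\times)\cdot\mathrm{Stab}_{C_N(\OO)}(P)$) is a well-defined homomorphism; this follows from $\nu(P)=1$ but is not what the proof of Theorem \ref{WFP} does, so the reference there is misleading. The paper instead invokes that the field of moduli is a field of definition for points on $X_1(N)$ in this setting.
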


\begin{remark} 
The case $N = 2$ is excluded because of the somewhat anomalous behavior of $2$-torsion.  But it is easy to see 
that Theorem \ref{BIGONE}a) remains true when $N = 2$, and moreover: \\
$\bullet$ If $\Delta \in \{-4,-3\}$ then $T(\OO,2) = 1$.  \\
$\bullet$ Otherwise, $T(\OO,2) = \begin{cases} 1 & \text{if }  \left(\frac{\Delta}{2} \right) \neq -1 \\ 3 & \text{if }  \left( \frac{\Delta}{2} \right) = -1 
\end{cases}$.
\end{remark}
\noindent

Let $F \supset K(\ff)$ be a number field, and let $E_{/F}$ be an $\OO$-CM elliptic curve.  As usual, we choose an embedding 
$F \hookrightarrow \C$ such that $j(E) = j(\C/\OO)$.  Let $P \in E[\tors]$ have order $N$.  We call the field
\[ K(\ff)(\hh(P)) \]
the \textbf{field of moduli} of $P$.  It is independent of the chosen model of $E_{/F}$, and there exists an elliptic curve $E'_{/K(\ff)(\hh(P))}$ with an isomorphism $\psi \colon E \rightarrow E'$ such that $\psi(P)$ is $K(\ff)(\hh(P))$-rational.  Further, the pair $(E,P)$ induces a closed point $\mathcal{P}$ on the modular curve $X_1(N)_{/K}$, and $K(\ff)(\hh(P))$ is the residue field $K(\mathcal{P})$.   Theorem \ref{BIGONE} concerns 
the degree $[K(\ff)(\hh(P)):K(\ff)]$.  Our setup shows that it is no loss of generality to assume $F = K(\ff)$.  
\\ \indent
Let $q_N \colon \OO \ra \OO/N\OO$ be the natural map, and let $q_N^{\times} \colon \OO^{\times} \ra C_N(\OO)$ be the induced map on unit groups.  As in the introduction, we define the \textbf{reduced mod N Cartan subgroup}:  
\[\overline{C_N(\OO) }= C_N(\OO)/q_N(\OO^{\times}). \]
Let $\overline{E[N]}$ be the set of $\OO^{\times}$-orbits on $E[N]$. Then the action of $C_N(\OO)$ on $E[N]$ induces an action 
of $\overline{C_N(\OO)}$ on $\overline{E[N]}$.  The field of moduli $K(\ff)(\hh(P))$ depends only on the image $\overline{P}$ of 
$P$ in $\overline{E[N]}$.  By Theorem \ref{MAINTHM}, the composite homomorphism 
\[ \gg_F \stackrel{\rho_{E,N}}{\longrightarrow} C_N(\OO) \ra \overline{C_N(\OO)} \]
is surjective (and model-independent).  Let $H_{\overline{P}} = \{g \in \overline{C_N(\OO)} \mid g \overline{P} = \overline{P}\}$.  It follows that
\[ \Aut(K(\ff)(\hh(P))/K(\ff))\cong \overline{C_N(\OO)}/H_{\overline{P}}. \]
Thus $[K(\ff)(\hh(P)):K(\ff)]$ is the size of the orbit of the reduced Cartan subgroup $\overline{C_N(\OO)}$ on 
$\overline{P}$.  (As we will see, in almost every case this is the size of the orbit of $C_N(\OO)$ on $P$ divided by $\# \OO^{\times}$.)  This reduces the proof of Theorem \ref{BIGONE} to a purely algebraic problem.

\subsection{Generalities}
For an order $N$ point $P \in \OO/N\OO$, let $M_P = \{ x P \mid x \in \OO\}$ be the cyclic $\OO$-submodule of 
$\OO/N\OO$ generated by $P$.  If we put 
$I_P = \{ x \in \OO \mid x P = 0\}$, then we have
\[ M_P\cong_{\OO} \OO/ I_P. \]
The isomorphism is canonical and determined by mapping $P \in M_P$ to $1 + I_P \in \OO/I_P$.

\begin{lemma}
\label{7.1}
a) With notation as above, let \[S(I_P) = \{g \in C_N(\OO) \mid g \equiv 1 \pmod{I_P} \}. \]  Then with respect to the $C_N(\OO)$-action, 
$S(I_P)$ is the stabilizer of $P$, so as a $C_N(\OO)$-set the orbit of $C_N(\OO)$ on $P$ is isomorphic to $C_N(\OO)/S(I_P)$.  \\
b) Moreover, there is a canonical isomorphism of groups $C_N(\OO)/S(I_P) \stackrel{\sim}{\ra} (\OO/I_P)^{\times}$.  
\end{lemma}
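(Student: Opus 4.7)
The plan is to handle (a) by a direct identification of the pointwise stabilizer, and then to obtain (b) by recognizing $C_N(\OO)/S(I_P)$ as the image of $(\OO/N\OO)^{\times}$ inside $\OO/I_P$ under the natural quotient, the key point being that this image is already the full unit group.

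For part (a), observe that $NP=0$ forces $N\OO\subset I_P$, so $I_P$ descends to an ideal of $\OO/N\OO$ and the condition $g\equiv 1\pmod{I_P}$ makes sense for $g\in C_N(\OO)=(\OO/N\OO)^{\times}$. For such $g$, the equality $gP=P$ is equivalent to $(g-1)P=0$, i.e.\ to $g-1\in I_P/N\OO$, which is exactly the defining membership in $S(I_P)$. Hence $S(I_P)$ is the stabilizer of $P$ under the $C_N(\OO)$-action, and orbit-stabilizer yields the stated $C_N(\OO)$-equivariant bijection between the orbit of $P$ and $C_N(\OO)/S(I_P)$.

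For part (b), the inclusion $N\OO\subset I_P$ induces a ring surjection $q:\OO/N\OO\twoheadrightarrow\OO/I_P$, and its restriction to units gives a group homomorphism $\tilde q:C_N(\OO)\to (\OO/I_P)^{\times}$ whose kernel is precisely $S(I_P)$ by the calculation in (a). The desired isomorphism will then follow from the first isomorphism theorem once $\tilde q$ is shown to be surjective. For surjectivity, I would decompose the finite (hence Artinian) commutative ring $\OO/N\OO$ as a product $\prod_i A_i$ of local Artinian rings; the quotient takes the compatible form $\OO/I_P\cong\prod_i A_i/J_i$ for some ideals $J_i\subset A_i$. In each factor with $J_i\neq A_i$, we have $J_i\subset\mm_i$, so any element of $A_i$ lifting a unit of $A_i/J_i$ must lie outside $\mm_i$ and is therefore already a unit of $A_i$; assembling componentwise produces a preimage in $(\OO/N\OO)^{\times}$ of any prescribed unit of $\OO/I_P$.

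The lemma is essentially formal: the only non-definitional step is the surjectivity of $\tilde q$, and it reduces instantly to the standard fact that units lift along quotients of local Artinian rings. I do not anticipate any serious obstacle.
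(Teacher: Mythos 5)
Your proof is correct and follows essentially the same route as the paper. Part (a) is identical: the equivalence $gP=P\iff (g-1)\in I_P$ plus orbit-stabilizer. For part (b) the paper also uses the induced map $C_N(\OO)\to(\OO/I_P)^{\times}$ with kernel $S(I_P)$, and for surjectivity it simply cites the standard fact that the unit group surjects along quotients of a semilocal ring; you instead prove that fact directly by decomposing the finite ring $\OO/N\OO$ into local Artinian factors and lifting units componentwise. That is a fine, self-contained substitute for the citation. One tiny omission: you only discuss the factors with $J_i\neq A_i$, but in general some $J_i$ can equal $A_i$ (e.g.\ if $N=p$ splits and $P$ generates one factor of $\OO/p\OO$), in which case $A_i/J_i$ is the zero ring and you should lift its unique unit to $1\in A_i$; this is trivial, but worth a word.
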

\begin{proof}
a) For $g \in C_N(\OO)$, we have $gP = P \iff (g-1)P = 0 \iff (g-1) \in I_P$, giving the first assertion.  The Orbit Stabilizer Theorem gives the second assertion.  \\
b) The ring homomorphism $f \colon \OO/N\OO \ra \OO/I$ induces a homomorphism on unit groups $f^{\times} \colon C_N(\OO) \ra (\OO/I_P)^{\times}$, with kernel $S(I_P)$.   Since $\O/N\OO$ has finitely many maximal ideals, $f^{\times}$ 
is surjective \cite[Thm. 4.32]{Clark-CA}.
\end{proof}

\begin{lemma}
\label{6.4}
There is a positive integer $M \mid N$ such that 
\[ \OO/I_P \cong_{\Z} \Z/N\Z \oplus \Z/M\Z. \]
\end{lemma}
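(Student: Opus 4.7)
The plan is to identify $\OO/I_P$ with $M_P$ (via the canonical isomorphism $1+I_P \mapsto P$ already noted), then analyze $M_P$ as a $\Z$-submodule of $\OO/N\OO$. First I would note that as a $\Z$-module, $\OO/N\OO$ is free of rank $2$ over $\Z/N\Z$, i.e., $\OO/N\OO \cong_{\Z} \Z/N\Z \oplus \Z/N\Z$, since $\OO$ is a free $\Z$-module of rank $2$.

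Next I would invoke the structure theorem for finitely generated abelian groups. Since $M_P$ is a subgroup of $(\Z/N\Z)^{\oplus 2}$, it is a finitely generated abelian group generated by (at most) two elements, and every element is killed by $N$. Thus there are positive integers $a \mid b$ with $b \mid N$ such that
\[ \OO/I_P \cong M_P \cong_{\Z} \Z/a\Z \oplus \Z/b\Z. \]

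Finally I would pin down $b = N$ using the hypothesis that $P$ has order $N$ in $\OO/N\OO$. Under the canonical isomorphism $\OO/I_P \xrightarrow{\sim} M_P$ sending $1 + I_P$ to $P$, the element $P$ corresponds to an element whose order in $M_P$ is exactly $N$. Hence the exponent of the abelian group $M_P$ is at least $N$, while it is also at most $N$ since $M_P \subset \OO/N\OO$. Therefore $b = N$, and setting $M \coloneqq a$ gives the claimed decomposition $\OO/I_P \cong_{\Z} \Z/N\Z \oplus \Z/M\Z$ with $M \mid N$. There is no real obstacle here; the whole argument is essentially a bookkeeping application of the structure theorem together with the order hypothesis on $P$.
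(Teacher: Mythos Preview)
Your proof is correct and follows essentially the same approach as the paper's: both apply the structure theorem for finite abelian groups to write $\OO/I_P \cong_{\Z} \Z/N'\Z \oplus \Z/M\Z$ with $M \mid N' \mid N$, then use that $P$ has order $N$ to force $N' = N$. The only cosmetic difference is that you realize $\OO/I_P \cong M_P$ as a \emph{submodule} of $\OO/N\OO \cong (\Z/N\Z)^2$, whereas the paper views $\OO/I_P$ directly as a \emph{quotient} of $\OO/N\OO$ (using $N\OO \subset I_P$); these are dual ways of reaching the same two-generator bound.
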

\begin{proof}
As a $\Z$-module, $\OO/I_P$ is a quotient of $\OO/N\OO \cong_{\Z} \Z/N\Z \oplus \Z/N\Z$, so 
\[ \OO/I_P \cong_{\Z} \Z/N' \Z \oplus \Z/M \Z \]
with $M \mid N' \mid N$.  Since $P$ has order $N$ in $(\OO/I_P,+)$, we have $N' = N$. 
\end{proof}
\noindent
The following result computes the size of the reduced Cartan orbit on an order $N$ point of $\OO/N\OO$ in terms of the size of 
the Cartan orbit.  We recall that we have assumed $N \geq 3$.  

\begin{lemma}
\label{CARTANREDUCTIONLEMMA}
a) Suppose $(\Delta,N) \neq (-3,3)$, and let $P \in \OO/N\OO$ have order $N$.  Then the orbit of $C_N(\OO)$ on $P$ 
has size $\# \OO^{\times}$ times the size of the orbit of $\overline{C_N(\OO)}$ on $\overline{P}$. \\
b) Suppose $(\Delta,N) = (-3,3)$.  Then the order $3$ points of $\OO/3\OO$ lie in two orbits under $C_3(\OO)$: one of 
size $2$ and one of size $6$.  The corresponding reduced Cartan orbits each have size $1$.  
\end{lemma}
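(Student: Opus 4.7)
The plan is to reduce both parts to computing stabilizers in the Cartan subgroup, and then carry out a small case analysis over the exceptional discriminants.

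Since $q_N(\OO^\times)$ sits inside the abelian group $C_N(\OO)$, all $q_N(\OO^\times)$-orbits inside a single $C_N(\OO)$-orbit have the same cardinality, so
\[
|\overline{C_N(\OO)} \cdot \overline{P}| = \frac{|C_N(\OO) \cdot P|}{|q_N(\OO^\times) \cdot P|}.
\]
Part (a) therefore reduces to showing that $q_N(\OO^\times)$ acts freely on $P$. Since $N \geq 3$ makes $q_N^\times : \OO^\times \hookrightarrow C_N(\OO)$ injective, it suffices to verify that for every $u \in \OO^\times \setminus \{1\}$ one has $u - 1 \notin I_P$.

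I would then split on $\OO^\times$. For $\OO^\times = \{\pm 1\}$ only $u = -1$ arises, and $-2 \notin I_P$ holds because $P$ has order $N \geq 3$. For $\OO^\times = \mu_4$ (so $\OO = \OO_K$ with $K = \Q(i)$), the extra checks are $\pm(i-1)$; since $1 - i$ and $1 + i$ are associates generating the ramified prime $\pp$ above $2$, $(i-1)P = 0$ would force the order of $P$ to divide $|\OO_K/\pp| = 2$, contradicting $N \geq 3$. For $\OO^\times = \mu_6$ (so $\OO = \OO_K$ with $K = \Q(\sqrt{-3})$), the five nontrivial differences are $-2$, $\omega-1$, $\omega^2-1$, $-\omega-1$, $-\omega^2-1$; by the cyclotomic relation $1 + \omega + \omega^2 = 0$ the last two are the units $\omega^2$ and $\omega$ respectively (so never in $I_P$, since $P \neq 0$), and $\omega - 1$, $\omega^2 - 1$ are associates generating the ramified prime $\pp$ above $3$. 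Hence $(u-1)P = 0$ can only occur when $(\omega - 1)P = 0$, which forces the order of $P$ to divide $|\OO_K/\pp| = 3$ and thus $N = 3$, the excluded case.

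For part (b) I would directly analyze the $9$-element local ring $\OO_K/3\OO_K \cong \OO_K/\pp^2$, whose $6$ units form $C_3(\OO_K)$ and whose maximal ideal $\pp/\pp^2$ has $3$ elements. All $8$ nonzero elements have additive order $3$: the $6$ units and the $2$ nonzero elements of $\pp/\pp^2$. By Lemma \ref{7.1}, for a unit $P$ one has $I_P = 3\OO_K = \pp^2$ and stabilizer $\{1\}$, giving orbit size $6$; for a nonzero non-unit $P$ one has $I_P = \pp$ and stabilizer $(1+\pp)/\pp^2$ of size $3$, giving orbit size $2$. These two orbits account for all $8$ order-$3$ elements. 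Finally, $q_3 : \OO_K^\times \to C_3(\OO_K)$ is an injection between groups of the same size $6$, hence a bijection, so $\overline{C_3(\OO_K)}$ is trivial and both reduced orbits are singletons.

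The main technical obstacle is the case $\OO^\times = \mu_6$: one must recognize via $1 + \omega + \omega^2 = 0$ that two of the five nontrivial differences $u - 1$ collapse to units, while the remaining two generate the single ramified prime above $3$, so that the only failure of the orbit-splitting formula of (a) is pinpointed by the residue field $\OO_K/\pp \cong \F_3$ precisely at $N = 3$.
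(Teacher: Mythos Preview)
Your proof is correct and, for part (a), follows essentially the same line as the paper: both arguments reduce to showing that the map $\OO^\times \to (\OO/I_P)^\times$ is injective, and then split on the size of $\OO^\times$. The only difference is cosmetic: where you check the differences $u-1$ by hand (identifying which are units and which generate the ramified prime above $2$ or $3$), the paper invokes Lemma~\ref{CFTLEMMA} on $U_{I_P}(K)$ for $\Delta = -4$ and the observation $I_P \nmid (\zeta_3-1)$ for $\Delta = -3$. Your version is more self-contained.

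For part (b) the approaches genuinely diverge. The paper cites \cite[Lemma~19]{TORS1} for the two Cartan orbit sizes and then deduces that the reduced orbits are singletons from the separately established existence of an $\OO_K$-CM elliptic curve over $\Q(\sqrt{-3})$ with full $3$-torsion. You instead compute everything directly inside the $9$-element local ring $\OO_K/\pp^2$: the unit group has size $6$ and acts simply transitively on itself, the two nonzero elements of the maximal ideal have stabilizer $1+\pp/\pp^2$ of size $3$, and the surjectivity of $q_3^\times$ (an injection between groups of order $6$) kills the reduced Cartan. Your route is shorter, entirely elementary, and avoids both the external citation and the forward reference; the paper's route has the virtue of tying the algebra back to the arithmetic picture.
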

\begin{proof}
a) The Cartan orbit has size $\# (\OO/I_P)^{\times}$, and the reduced Cartan orbit is smaller by a factor of the cardinality 
of the image of $\OO^{\times} \ra (\OO/I_P)^{\times}$. \\
$\bullet$ Suppose $\Delta \notin \{-4,-3\}$.  Then $\OO^{\times} = \{ \pm 1\}$, 
and since $N \geq 3$, we have $-1 \not \equiv 1 \pmod{I_P}$.  \\
$\bullet$ Suppose $\Delta = -4$.  Since $I_P \not \supset (2)$, by Lemma \ref{CFTLEMMA} the group $U_{I_P}(K)$ is trivial, 
and thus the map $\OO^{\times} \ra (\OO/I_P)^{\times}$ is injective. \\
$\bullet$ Suppose $\Delta = -3$.  By assumption $N \geq 4$, so $I_P \nmid (\zeta_3-1)$ and the map 
$\OO^{\times} \ra (\OO/I_P)^{\times}$ is injective. \\
b) The assertion about Cartan orbits is a case of \cite[Lemma 19]{TORS1}.  (And another proof will be given in the next section.) The fact that both reduced Cartan orbits have size $1$ follows from the already established fact that there is an $\OO$-CM $E_{/\Q(\sqrt{-3})}$ with full $3$-torsion. 
\end{proof}
\noindent
In view of Lemma \ref{CARTANREDUCTIONLEMMA}, to prove Theorem \ref{BIGONE} it suffices to compute the least size of an 
orbit of $C_N(\OO)$ on an order $N$ point of $\OO/N\OO$ and show that this divides the size of every such orbit.  
The following result further reduce us to the case of $N$ a prime power.

\begin{prop}
Let $N \geq 2$ have prime power decomposition $N = \ell_1^{a_1} \cdots \ell_r^{a_r}$.  Let $P \in \OO/N\OO$ have order $N$, 
and let $I_P = \ann P$.  For $1 \leq i \leq r$, let $P_i = \frac{N}{\ell_i^{a_i}} P$, and let $I_{P_i} = \ann P_i$.  Then: \\
a) The ideals $I_{P_1},\ldots,I_{P_r}$ are pairwise comaximal: we have $I_{P_i} + I_{P_j} = \OO$ for all $i \neq j$.  \\
b) We have $I_P = I_{P_1} \cdots I_{P_r}$. \\
c) We have a canonical isomorphism of rings 
\[ \OO/I_P \stackrel{\sim}{\ra} \prod_{i=1}^r \OO/I_{P_i} \]
which induces a canonical isomorphism of unit groups 
\[ (\OO/I_P)^{\times} \stackrel{\sim}{\ra} \prod_{i=1}^r (\OO/I_{P_i})^{\times}. \]
d) The Cartan orbit of $P$ is isomorphic, as a $C_N(\OO)$-set, to the direct product of the $C_{\ell_i^{a_i}}(\OO)$-orbits 
of the $P_i$'s.
\end{prop}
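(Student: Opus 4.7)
\medskip

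\noindent\textbf{Proof proposal.} The plan is to deduce all four parts from two inputs: the Chinese Remainder Theorem applied to the pairwise coprime prime-power components of $N$, and Lemma \ref{7.1}, which identifies the Cartan orbit with the unit group of a quotient ring. The central observation driving everything is that $\ell_i^{a_i} \cdot P_i = \frac{N \ell_i^{a_i}}{\ell_i^{a_i}} P = NP = 0$, so $\ell_i^{a_i} \in I_{P_i}$ for each $i$.

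First I would prove (a): since $I_{P_i} \ni \ell_i^{a_i}$ and $I_{P_j} \ni \ell_j^{a_j}$, and these two integers are coprime, $I_{P_i} + I_{P_j}$ contains $1$, so it equals $\OO$. Next I would prove (b) by a two-way containment. For $x \in I_P$, the identity $x P_i = \frac{N}{\ell_i^{a_i}}(xP) = 0$ gives $I_P \subset \bigcap_i I_{P_i}$. Conversely, pick integers $c_1,\ldots,c_r$ with $\sum_i c_i \frac{N}{\ell_i^{a_i}} = 1$ (possible since the integers $\frac{N}{\ell_i^{a_i}}$ are globally coprime); if $x \in \bigcap_i I_{P_i}$, then
\[ xP = \sum_i c_i \tfrac{N}{\ell_i^{a_i}} x P = \sum_i c_i (x P_i) = 0, \]
giving $I_P = \bigcap_i I_{P_i}$. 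By (a) and standard commutative algebra, intersection and product of pairwise comaximal ideals agree, so $I_P = I_{P_1} \cdots I_{P_r}$. Part (c) is then the Chinese Remainder Theorem applied to these pairwise comaximal ideals (the induced iso on unit groups is automatic).

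For part (d), I would invoke Lemma \ref{7.1}(a): the $C_N(\OO)$-orbit of $P$ is isomorphic as a $C_N(\OO)$-set to $C_N(\OO)/S(I_P)$, and the $C_{\ell_i^{a_i}}(\OO)$-orbit of $P_i$ (regarded as an element of $\OO/\ell_i^{a_i}\OO$, which is legitimate since $\ell_i^{a_i} \in I_{P_i}$) is $C_{\ell_i^{a_i}}(\OO)/S(I_{P_i})$. Under the canonical CRT isomorphism $C_N(\OO) \xrightarrow{\sim} \prod_i C_{\ell_i^{a_i}}(\OO)$, the stabilizer $S(I_P) = \ker(C_N(\OO) \to (\OO/I_P)^{\times})$ corresponds via part (c) to $\prod_i S(I_{P_i})$, so taking quotients gives
\[ C_N(\OO)/S(I_P) \;\cong\; \prod_{i=1}^r C_{\ell_i^{a_i}}(\OO)/S(I_{P_i}) \]
as $C_N(\OO)$-sets. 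This is the desired product decomposition.

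The one point requiring a bit of care, which I view as the main (minor) obstacle, is the identification of $P_i$ as a point in $\OO/\ell_i^{a_i}\OO$: under the CRT isomorphism $\OO/N\OO \xrightarrow{\sim} \prod_i \OO/\ell_i^{a_i}\OO$, the element $P_i = \frac{N}{\ell_i^{a_i}} P$ has zero component in every factor $j \neq i$, and in the $i$-th factor it equals $\frac{N}{\ell_i^{a_i}}$ times the image of $P$; since $\frac{N}{\ell_i^{a_i}}$ is a unit modulo $\ell_i^{a_i}$, this is a Cartan-translate of the $i$-th component of $P$, and in particular has the same $C_{\ell_i^{a_i}}(\OO)$-orbit and the same annihilator ideal $I_{P_i}$ when viewed in $\OO$. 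Once this bookkeeping is confirmed, (d) follows cleanly from the calculation above.
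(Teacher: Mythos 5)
Your proposal is correct and follows essentially the same approach as the paper: a two-way containment plus a Bezout relation for (b), CRT for (c), and Lemma 7.1 applied componentwise for (d). Your argument for (a) is slightly more elementary — you note directly that $\ell_i^{a_i} \in I_{P_i}$ and invoke coprimality of $\ell_i^{a_i}$ and $\ell_j^{a_j}$, whereas the paper observes that $\OO/I_{P_i}$ is an $\ell_i$-group and $\OO/(I_{P_i}+I_{P_j})$ is therefore simultaneously an $\ell_i$-group and an $\ell_j$-group; these come to the same thing. Your extra paragraph on part (d), spelling out why $P_i$ may be regarded as living in $\OO/\ell_i^{a_i}\OO$ with the same annihilator, is a useful clarification of a point the paper leaves implicit in its ``apply Lemma \ref{7.1} and part c)''.
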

\begin{proof}
a) For $1 \leq i \leq r$, we have $(\OO/I_{P_i},+) \cong \Z/\ell_i^{a_i}\Z \oplus \Z/\ell_i^{b_i} \Z$ with $0 \leq b_i \leq a_i$; 
in particular it is an $\ell_i$-group.  Thus for $i \neq j$, $(\OO/(I_i + I_j),+)$ is a homomorphic image of an $\ell_i$-group 
and an $\ell_j$-group, so it is trivial.  \\
b) By the Chinese remainder theorem, we have $I_{P_1} \cdots I_{P_n} = \bigcap_{i=1}^n I_{P_i}$.  Since $P_i$ is a multiple of $P$, we have $I_P \subset I_{P_i}$ for all $i$, and thus $I_P \subset \bigcap_{i=1}^r I_{P_i}$.  Conversely, choose $y_1,\ldots,y_r \in \Z$ such that $\sum_{i=1}^r y_i \frac{N}{\ell_i^{a_i}} = 1$.  If $x \in \bigcap_{i=1}^r I_{P_i}$ 
then $x \frac{N}{\ell_i^{a_i}} P = 0$ for all $i$, hence 
\[ 0 = \sum_{i=1}^r y_i \frac{N}{\ell_i^{a_i}} xP = xP, \]
so $x \in I_P$.   Thus $I_P = \bigcap_{i=1}^n I_{P_i} = I_{P_1} \cdots I_{P_n}$.  \\
c) The Chinese remainder theorem gives the first isomorphism; the second follows by passing to unit groups. \\
 d) Apply Lemma \ref{7.1} and part c).
\end{proof}

\subsection{The Case $\ell \nmid \ff$}

\begin{thm}
\label{THM7.8}
Let $E_{/K(\ff)}$ be an $\OO$-CM elliptic curve.  Let $\ell^a > 2$ be a prime power such that $\ell \nmid \ff$.  We will describe all orbits of $C_{\ell^a}(\OO)$ on order $\ell^a$ points of $\OO/\ell^a\OO$: their sizes and their multiplicities.  \\
a) If $\left( \frac{\Delta}{\ell} \right) = 1$, there are $2a+1$ orbits: two orbits of size $\ell^{a-1}(\ell-1)$, for all 
$1 \leq i \leq a-1$ two orbits of size $\ell^{a+i-2}(\ell-1)^2$, and one orbit of size $\ell^{2a-2}(\ell-1)^2$. \\
b) If $\left( \frac{\Delta}{\ell} \right) = 0$, there are two orbits: an orbit of size $\ell^{2a-2}(\ell-1)$ and an orbit of size 
$\ell^{2a-1}(\ell-1)$.  \\
c) If $\left( \frac{\Delta}{\ell} \right) = -1$, there is one orbit, of size $\ell^{2a-2} (\ell^2-1)$.
\end{thm}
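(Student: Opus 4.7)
The plan is to leverage the hypothesis $\ell \nmid \ff$, which gives $\OO \otimes \Z_\ell = \OO_K \otimes \Z_\ell$ and hence a canonical ring isomorphism $R := \OO/\ell^a\OO \cong \OO_K/\ell^a\OO_K$, together with an equivariant isomorphism $C_{\ell^a}(\OO) \cong R^\times$. Since $\ell \nmid \ff$ also gives $\left(\frac{\Delta}{\ell}\right) = \left(\frac{\Delta_K}{\ell}\right)$, the problem reduces to analyzing the action of $R^\times$ on $R$ by multiplication according to how $\ell$ splits in $K$. In each case the ring structure of $R$ is standard, and the orbit count drops out of a direct stratification.

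For case (a), split, $\OO_K \otimes \Z_\ell \cong \Z_\ell \times \Z_\ell$, so $R \cong A \times A$ with $A = \Z/\ell^a\Z$ and $R^\times$ acting componentwise. On $A$, the $A^\times$-orbits are the sets $\ell^k A^\times$ for $0 \leq k \leq a$ (with $k=a$ giving the singleton $\{0\}$), of sizes $\varphi(\ell^{a-k})$. Orbits on $R$ are products of two such, and the order-$\ell^a$ condition becomes $\min(k_1, k_2) = 0$. Stratifying by $(k_1,k_2)$ yields exactly $2a+1$ orbits with the claimed sizes: one for $(0,0)$ of size $\varphi(\ell^a)^2 = \ell^{2a-2}(\ell-1)^2$; two for each $1 \leq i \leq a-1$ (from the coordinate swap) of size $\varphi(\ell^a)\varphi(\ell^{a-i}) = \ell^{a+i-2}(\ell-1)^2$; and two (from $(0,a)$ and $(a,0)$) of size $\varphi(\ell^a) = \ell^{a-1}(\ell-1)$.

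For case (b), ramified, $R$ is a finite local principal ring with uniformizer $\pi$ satisfying $\pi^2 = \ell \cdot (\text{unit})$ and $\pi^{2a} = 0$, so every nonzero element has a well-defined valuation $i \in \{0,\ldots,2a-1\}$. A short calculation using $\ell^k \pi^i = 0 \iff 2k + i \geq 2a$ shows that $\pi^i$ has additive order $\ell^{a - \lfloor i/2 \rfloor}$, so the order-$\ell^a$ condition forces $i \in \{0,1\}$. Since the stabilizer of $\pi^i$ in $R^\times$ is $1 + \pi^{2a-i} R$, of order $\ell^i$, the orbit of $\pi^i$ has size $\#R^\times / \ell^i = \ell^{2a-1-i}(\ell-1)$, giving the two claimed orbits of sizes $\ell^{2a-1}(\ell-1)$ and $\ell^{2a-2}(\ell-1)$. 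Case (c), inert, is essentially Lemma~\ref{NEWLEMMA2}(b): $R$ is a local principal ring with maximal ideal $(\ell)$, the order-$\ell^a$ elements are precisely the units, and $R^\times$ acts simply transitively on itself, yielding a single orbit of size $\#R^\times = \ell^{2a-2}(\ell^2 - 1)$.

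The main obstacle is the bookkeeping in case (a), where both the size formula $\ell^{a+i-2}(\ell-1)^2$ and the two-fold multiplicity from the coordinate swap must be verified carefully; the ramified and inert cases follow essentially immediately from the explicit ring structure of $R$.
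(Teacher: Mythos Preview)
Your approach is essentially the paper's: reduce to $\OO_K$ via $\ell \nmid \ff$ and then analyze $R = \OO_K/\ell^a\OO_K$ case by case according to the splitting of $\ell$; the paper phrases this through annihilator ideals $I_P$ and the identification of the Cartan orbit with $(\OO/I_P)^{\times}$ (Lemma~\ref{7.1}), while you work directly with valuations and stabilizers, but the content is identical. One bookkeeping slip in case (a): with your parametrization by the pair $(0,i)$ the orbit size is $\varphi(\ell^a)\varphi(\ell^{a-i}) = \ell^{2a-i-2}(\ell-1)^2$, not $\ell^{a+i-2}(\ell-1)^2$; to match the theorem's index $i$ you want the pair $(0,a-i)$, giving $\varphi(\ell^a)\varphi(\ell^{i})$, though the resulting multiset of orbit sizes is of course the same.
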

\begin{proof}
Step 1: Suppose $\OO = \OO_K$.  Then every $\OO$-submodule of $E[N]$ is 
of the form $E[I]$ for an ideal $I \supset N \OO$, and $E[I] \cong_{\OO} \OO/I$: thus every 
submodule is of the form $M_P = \langle P \rangle_{\OO}$ and is determined by its annihilator ideal $I_P$.  Conversely, 
if $I \supset N\OO$ is an ideal, then Lemmas \ref{LAZARUSLEMMA} and \ref{INVLEMMA1} give that $E[I]$ is an $\OO$-submodule of $E[N]$ with annihilator ideal $I$.  
 \\
\textbf{Split Case $\left( \frac{\Delta}{\ell} \right) = 1$:} Then $\ell \OO = \pp_1 \pp_2$ for distinct prime ideals $\pp_1, \pp_2$ 
of norm $\ell$.  The ideals containing $\ell^a \OO$ are precisely $\pp_1^c \pp_2^d$ with $\max(c,d) \leq a$.  We have ring isomorphisms
\[ \OO/\pp_1^c \pp_2^d \cong \OO/\pp_1^c \times \OO/\pp_2^d \cong \Z/\ell^c \Z \times \Z/\ell^d \Z, \]
hence unit group isomorphisms
 \[ (\OO/\pp_1^c \pp_2^d)^{\times} \cong (\OO/\pp_1^c)^{\times} \times (\OO/\pp_2^c)^{\times} \cong 
(\Z/\ell^c \Z)^{\times} \times (\Z/\ell^d \Z)^{\times}, \]
so
\[ \# (\OO/\pp_1^c \pp_2^d)^{\times} = \varphi(\ell^c) \varphi(\ell^d). \]
To get points of order $\ell^a$ we impose the condition $\max(c,d) = a$.  Thus $\OO$-modules generated by the points of order $\ell^a$ are 
\[ E[\pp_1^a], \  E[\pp_1^a \pp_2], \ldots, \ E[\pp_1^a \pp_2^a] = E[\ell^a], \ E[\pp_1^{a-1} \pp_2^a], \ldots, \ E[\pp_1 \pp_2^a], \ E[\pp_2^a]. \]
So there are $2a+1$ Cartan orbits, one of size $\varphi(\ell^a) \varphi(\ell^a)$ and, for all $0 \leq i \leq a-1$, two of size 
$\varphi(\ell^a) \varphi(\ell^i)$. The smallest orbit size is $\ell^{a-1}(\ell-1)$, and all the other orbit 
sizes are multiples of it.
\\ 
\textbf{Ramified Case $\left( \frac{\Delta}{\ell} \right) = 0$:} Then $\ell \OO = \pp^2$ for a prime ideal $\pp$ of norm $\ell$.  
 For any $b \in \Z^+$, the ring $\OO/\pp^b$ is local 
of order $\ell^b$ with residue field $\Z/\ell\Z$, so the maximal ideal has size $\ell^{b-1}$ and thus
\[ \# (\OO/\pp^b)^{\times} = \ell^b - \ell^{b-1} = \ell^{b-1}(\ell-1). \]
Since $\pp^2 = (\ell)$, the least $c \in \mathbb{N}$ such that $\ell^c \in \pp^b$ is $c = \lceil \frac{b}{2} \rceil$.  It follows that
\[ (\OO/\pp^b,+) \cong_{\Z} \Z/\ell^{\lceil \frac{ b}{2} \rceil}\Z \oplus \Z/\ell^{\lfloor \frac{b}{2} \rfloor} \Z. \]
So the annihilator ideals of points of order $\ell^a$ in $\OO/\ell^a \OO$ are precisely $\pp^{2a-1}$ and $\pp^{2a}$.  We 
get two Cartan orbits, one of size $\# (\OO/\pp^{2a-1})^{\times} = \ell^{2a-2}(\ell-1)$ and one of size $\# (\OO/\pp^{2a})^{\times} = \ell^{2a-1}(\ell-1)$.  
The smallest orbit size is $\ell^{2a-2}(\ell-1)$, and the other orbit size is a multiple of it.  
\\ 
\textbf{Inert Case $\left( \frac{\Delta}{\ell} \right) = -1$}: Then $\ell \OO$ is a prime ideal, so the ideals containing $\ell^a \OO$ are 
precisely $\ell^i \OO$ for $i \leq a$.  Clearly $\OO/\ell^i \OO$ has exponent $\ell^a$ iff $i = a$, so the $\OO$-module generated 
by any point of order $\ell^a$ is $E[\ell^a]$.  There is a single Cartan orbit, of size $\# (\OO/\ell^a \OO)^{\times} = \varphi_K(\ell^a) = 
\ell^{2a-2}(\ell^2-1)$.  \\
Step 2: Now let $\OO$ be an order with $\ell \nmid \ff$.  The natural maps $\OO/\ell^a \OO \ra \OO_K/\ell^a \OO_K$ and 
$C_{\ell^a}(\OO) \ra C_{\ell^a}(\OO_K)$ are isomorphisms, so the sizes and multiplicities of orbits carry over from $\OO_K$ 
to $\OO$. 
\end{proof}

\subsection{The Case $\ell \mid \ff$}
Now suppose $\ell \mid \ff$.  The ring $\OO/\ell \OO$ is isomorphic to $\Z/\ell\Z[\epsilon]/(\epsilon^2)$ -- as one sees, e.g., using the explicit representation of 
(\ref{ITTEQ1}) -- and is thus a local Artinian ring with maximal ideal $\pp$, say, and residue field $\Z/\ell\Z$.  Because $[\pp:\ell O] = \ell$, the only proper nonzero $\OO$-submodule of $\OO/\ell \OO$ is $\pp/\ell$.  Thus there are two Cartan 
orbits on the order $\ell$ elements of $\OO/\ell \OO$: one of order $\ell-1$ and one of order 
$\ell^2-\ell = \# (\OO/\ell \OO)^{\times}$.  
\\ \indent
For all $a \in \Z^+$, the ring $\OO/\ell^a \OO$ is local -- for a maximal ideal $\mm$ of $\OO$, we have $\ell^a \in \mm \iff \ell \in \mm$ -- 
with residue field $\Z/\ell\Z$.  In turn it follows that for any order $\ell^a$ point $P \in \OO/\ell^a \OO$ and $I_P = \{x \in \OO \mid x P = 0\}$, the ring $\OO/I_P$ is  local with residue field $\Z/\ell\Z$.  By Lemma \ref{6.4}, we may write 
\begin{equation}
\label{CARTANLIFTINGEQ}
M_P = \OO/I_P \cong_{\Z} \Z/\ell^a \Z \oplus \Z/\ell^b \Z 
\end{equation}
for some $0 \leq b \leq a$, and then 
\[ \# (\OO/I_P)^{\times} = \# \OO/I_P - \frac{\# \OO/I_P}{\ell} = \ell^{a+b-1}(\ell-1). \]
So the size of a Cartan orbit on an order $\ell^a$ element of $\OO/\ell^a \OO$ is of the form 
$(\ell-1) \ell^c$ for some $a-1 \leq c \leq 2a-1$.  So in this case it is \emph{a priori} clear that the minimal size of a 
Cartan orbit divides the size of all the Cartan orbits.  We want to understand how Cartan orbits grow when we lift a point of 
order $\ell^a$ to a point of order $\ell^{a+1}$.  First observe that $x \mapsto \ell x$ gives an $\OO$-module isomorphism 
\[\OO/\ell^a \OO \stackrel{\sim}{\ra} \ell \OO/\ell^{a+1} \OO, \]
so we can view $\OO/\ell^a \OO$ as an $\OO$-submodule of $\OO/\ell^{a+1} \OO$.  With $P$ as in (\ref{CARTANLIFTINGEQ}), let $Q \in \OO/\ell^{a+1} \OO$ be such that $\ell Q = P$.  Put $M_Q = \{ x Q \mid x \in \OO\}$ and $I_Q = \{ x \in \OO \mid x Q = 0\}$, and write 
\begin{equation}
\label{CARTANLIFTINGEQ2}
M_Q = \OO/I_Q \cong_{\Z} \Z/\ell^{a+1} \Z \oplus \Z/\ell^{b'} \Z 
\end{equation}
for $0 \leq b' \leq a+1$.  Because $\ell Q = P$, we have $\ell M_Q = M_P$.  Thus we find: if $b = 0$, then $b' \in \{0,1\}$, 
whereas if $b \geq 1$ then necessarily $b' = b+1$.  So: if the $C_{\ell^a}(\OO)$-orbit on $P$ has the smallest possible size $\varphi(\ell^a)$, 
then the $C_{\ell^{a+1}}(\OO)$-orbit on $Q$ either has size $\varphi(\ell^{a+1})$ or size $\varphi(\ell^{a+2})$ (as we will see shortly, 
both possibilities can occur), whereas if the $C_{\ell^a}(\OO)$-orbit on $P$ has size $\varphi(\ell^{a+b}) > \varphi(\ell^a)$, 
then the $C_{\ell^{a+1}}(\OO)$-orbit on $Q$ has size $\varphi(\ell^{a+b+2})$: i.e., upon lifting from $P$ to $Q$ the size 
grows by a factor of $\ell^2$.  
\\ \\
Since $H(\OO,\ell^{a+1})$ implies $H(\OO,\ell^a)$, for each fixed $\ell$ and $\OO$ there are two possibilities.
\\ 
\textbf{Type I}: $H(\OO,\ell^a)$ holds for all $a \in \Z^+$.  \\
In Type I, for all $a \in \Z^+$ the least size of a $C_{\ell^a}(\OO)$-orbit is $\varphi(\ell^{a})$.  
\\ 
\textbf{Type II:} There is some $A \in \Z^+$ such that $H(\OO,\ell^a)$ holds iff $a \leq A$. \\
In Type II, for $1 \leq a \leq A$, the least size of a $C_{\ell^a}(\OO)$-orbit is $\varphi(\ell^a)$, but for all $a \geq A$, 
whenever we lift a point of order $\ell^a$ to a point of order $\ell^{a+1}$ the size of the Cartan orbit grows by a factor of 
$\ell^2$, so for all $a > A$ the least size of a $C_{\ell^a}(\OO)$-orbit is $\ell^{a-A} \varphi(\ell^a)$.  
\\ \\
We now determine the smallest size of a $C_{\ell^a}(\OO)$-orbit on an order $\ell^a$ point of $\OO/\ell^a \OO$ by using Theorem \ref{SQUAREDISCTHM} to determine the type and compute the value of $A$ in Type II.
\\ \\
\textbf{Case 1}: Suppose $\left( \frac{\Delta_K}{\ell} \right) = 1$.  Then for all $a \in \Z^+$ $H(\OO_K,\ell^a)$ holds, so 
$\Delta_K$ is a square modulo $4 \ell^a$, hence $\Delta = \ff^2 \Delta_K$ is also a square modulo $4 \ell^a$, so 
$H(\OO,\ell^a)$ holds, and we are in Type I. 
\\ 
\textbf{Case 2}: Suppose $\left( \frac{\Delta_K}{\ell} \right) = -1$, and put $k = \ord_{\ell}(\ff)$.  \\ \indent
$\bullet$ Let $\ell > 2$.  If $a \leq 2k$, then  $\ell^a \mid \Delta$, so $\Delta$ is a square mod $\ell^a$ and 
hence also mod $4 \ell^a$: thus $H(\OO,\ell^a)$ holds.  However, if $a = 2k+1$ then we claim $H(\OO,\ell^a)$ does not hold.  Indeed, suppose there is $s \in \Z$ such that $\Delta = \ff^2 \Delta_K \equiv s^2 \pmod{\ell^a}$.  Then $\ell^k \mid s$; taking 
$S = \frac{s}{\ell^k}$ we have $\frac{\ff^2}{\ell^{2k}} \Delta_K \equiv S^2 \pmod{\ell^{a-2k}}$, which implies that 
$\Delta_K$ is a square modulo $\ell$: contradiction.  So we are in Type II with $A = 2k$.  \\ \indent
$\bullet$ Let $\ell =2$, and write $\ff = 2^k F$.  Suppose $a \leq 2k$.  Since $4 \mid \Delta_K -1$, we have
\[ 2^{a+2} \mid    (2^k F)^2 (\Delta_K-1) =  \Delta - (2^k F)^2, \]
so $H(\OO,2^a)$ holds.  Suppose $a \geq 2k+1$.  If $\Delta$ is a square modulo $2^{a+2}$, then we find that $\Delta_K \equiv 1 \pmod{8}$, 
so $\left( \frac{\Delta_K}{2} \right) = 1$: contradiction.  So we are in Type II with $A = 2k$. 
\\ 
\textbf{Case 3}: Suppose $\left( \frac{\Delta_K}{\ell} \right) = 0$, and put $k = \ord_{\ell}(\ff)$. \\ \indent
$\bullet$ Let $\ell > 2$.  If $a \leq 2k+1$, then $\ell^a \mid \Delta$, so $\Delta$ is a square mod $\ell^a$ and hence also 
mod $4 \ell^a$: thus $H(\OO,\ell^a)$ holds.  However, if $a = 2k+2$ then we claim $H(\OO,\ell^a)$ does not 
hold.  Indeed, $\ord_{\ell}(\Delta) = 2k+1 < a$, so if $\Delta \equiv s^2 \pmod{\ell^a}$, then $\ord_{\ell}(s^2) = 2k+2$: 
contradiction.   So we are in Type II with $A = 2k+1$. \\ \indent
$\bullet$ Let $\ell = 2$, and write $\ff = 2^k F$.  Suppose $a \leq 2k+1$.  Since $4 \mid \Delta_K$, there is $s \in \Z$ 
such that $8 \mid \Delta_K - s^2$, so 
\[ 2^{a+2} \mid 2^{2k+3} \mid  (2^k F)^2 (\Delta_K - s^2) = \Delta - (2^k Fs)^2, \]
so $H(\OO,2^a)$ holds.  Suppose $a \geq 2k+2$.  If $\Delta$ is a square modulo $2^{a+2}$, then 
$\Delta_K$ is a square modulo $2^{a+2-2k}$, hence modulo $16$: contradiction.  So we are in Type II with $A = 2k+1$.

\subsection{Torsion over $K(j)$: Part II}
We return to complete the classification of torsion on $\OO$-CM elliptic curves $E_{/K(\ff)}$ begun in $\S$6.6.
\\ \\
II. Suppose $\Delta = -4$, so $j = 1728$ and $K(\ff) = K = \Q(\sqrt{-1})$.  
\\ \\
$\bullet$ By Theorem \ref{SPY}, if $E(K)$ has a point of order $N$, then $\varphi(N) \mid 4$, so 
\[N \in \{1,2,3,4,5,6,8,10\}. \]
$\bullet$ Using Theorem \ref{BIGONE} we get 
\[ T(\OO,1) = T(\OO,2) = T(\OO,4) = T(\OO,5) = T(\OO,10) = 1, \]
\[ T(\OO,3) = T(\OO,6) = 2, \ T(\OO,8) = 4. \]
$\bullet$ We have $C_2(\OO) = \mu_4/\{\pm 1\}$.  Thus $\# C_2(\OO) = 2$ so every $\OO$-CM elliptic curve $E_{/K}$ has a 
$K$-rational point of order $2$, and some $\OO$-CM elliptic curve $E_{/K}$ has $E[2] = E[2](K)$.  \\
$\bullet$  Because $\widetilde{T}(\OO,5) = 4$, if an $\OO$-CM elliptic curve $E_{/K}$ has a $K$-rational point of order $5$, 
the index of the mod $5$ Galois representation in $C_5(\OO)$ is divisible by $4$.   Because $\# C_2(\OO) = 2$, 
if an $\OO$-CM elliptic curve $E_{/K}$ has full $2$-torsion then the index of the mod $2$ Galois 
representation in $C_2(\OO)$ is divisible by $2$.  Thus if an $\OO$-CM elliptic curve $E_{/K}$ had $\Z/2\Z \times \Z/10\Z \hookrightarrow E(K)[\tors]$, the index of the mod $10$ Galois representation in $C_{10}(\OO)$ would be divisible by $8$, 
contradicting Corollary \ref{COR1.4}.  \\
$\bullet$ If $N \geq 3$ then $\# C_N(\OO) > \# \OO^{\times}$, so no $\OO$-CM elliptic curve $E_{/K}$ has $E[N] = E[N](K)$.  \\
$\bullet$ If there were a CM elliptic curve $E_{/K}$ with $E(K)[\tors] \cong \Z/4\Z$, then there would be an ideal $I$ of $\OO$ 
such that $\OO/I$ is isomorphic as a $\Z$-module to $\Z/4\Z$.  But there is no such an ideal, a special case of the analysis 
done in the proof of Theorem \ref{THM7.8}.
\\ \\
Thus the groups which can occur 
as $E(K)[\tors]$ are precisely 
\[ \Z/2\Z,  \Z/10\Z, \Z/2\Z \times \Z/2\Z, \Z/2\Z \times \Z/4\Z. \]

\begin{example}
\label{EXAMPLE7.9}
For $K = \Q(\sqrt{-1})$, every $\OO_K$-CM elliptic curve $E_{/K}$ is isomorphic over $K$ to 
\[E_A: y^2 = x^3 + Ax\] 
for some $A \in K^{\times}$.  We exhibit such elliptic curves with all possible torsion subgroups.  

{\footnotesize
\begin{center}
    \begin{tabular}{ c||c}
      $A$ & $E(\Q(\sqrt{-1}))[\tors]$\\  \hline
$2$ & $\Z/2\Z$ \\
$64-128\sqrt{-1}$ & $\Z/10\Z$ \\
$1$ & $\Z/2\Z \times \Z/2\Z$ \\
$4$ & $\Z/2\Z \times \Z/4\Z$ 
     \end{tabular}
\end{center}
}
\noindent
For the groups $\Z/2\Z$, $\Z/2\Z \times \Z/2\Z$ and $\Z/2\Z \times \Z/4\Z$, we have $A \in \Q^{\times}$ and thus $E_A$ arises 
from an elliptic curve defined over $\Q$ via base extension.  Gonz\'alez-Jim\'enez has shown \cite[Thm. 1]{GJ19} that for no $A \in \Q^{\times}$ do we have $E_A(K)[\tors] \cong \Z/10\Z$.  Najman \cite{Najman10}, \cite{Najman11} has classified the torsion subgroups of 
all elliptic curves (CM or otherwise) defined over $K$.  
\end{example}

III. Suppose $\Delta = -3$, so $j = 0$ and $K(\ff) = K = \Q(\sqrt{-3})$.
\\ \\
$\bullet$ By Theorem \ref{SPY}, if $E(K(\ff))$ has a point of order $N$, then $\varphi(N) \mid 6$, so 
\[ N \in \{1,2,3,4,6,7,9,14,18\}. \]
$\bullet$ Using Theorem \ref{BIGONE} we get
\[ T(\OO,1) = T(\OO,2) = T(\OO,3) = T(\OO,6) = T(\OO,7) = 1, \]
\[ T(\OO,4) = 2, \ T(\OO,9) = T(\OO,14) = 3, \ T(\OO,18) = 9. \]
$\bullet$ We have $C_2(\OO) = \mu_6/\{ \pm 1\}$.  Thus as we range 
over all $\OO$-CM elliptic curves $E_{/K}$, the group $E(K)[2]$ can be trivial (using Corollary \ref{LargeTwistCor}) or have size $4$, but it cannot have size $2$.  \\
$\bullet$ We have $C_3(\OO) = \mu_6$.  Thus there is an $\OO$-CM elliptic curve $E_{/K}$ with $E[3] = E[3](K)$. \\
$\bullet$ If $N \geq 4$ then $\# C_N(\OO) > \# \OO^{\times}$, so no $\OO$-CM elliptic curve $E_{/K}$ has 
$E[N] = E[N](K)$.  
\\
Thus the groups which can occur 
as $E(K)[\tors]$ are precisely 
\[ \{e\}, \Z/3\Z, \Z/7\Z, \Z/2\Z \times \Z/2\Z, \Z/2\Z \times \Z/6\Z,  \Z/3\Z \times \Z/3\Z. \]

\begin{example}
\label{EXAMPLE7.10}
For $K = \Q(\sqrt{-3})$, every $\OO_K$-CM elliptic curve $E_{/K}$ is isomorphic over $K$ to 
\[E_A: y^2 = x^3 + B\] 
for some $B \in K^{\times}$.  We exhibit such elliptic curves with all possible torsion subgroups.

{\footnotesize
\begin{center}
    \begin{tabular}{ c||c}
      $B$ & $E(\Q(\sqrt{-1}))[\tors]$\\  \hline
$2$ & $\{e\}$ \\
$4$ & $\Z/3\Z$ \\
$6\sqrt{-3}-54$ & $\Z/7\Z$ \\
$-1$ & $\Z/2\Z \times \Z/2\Z$ \\
$1$ & $\Z/2\Z \times \Z/6\Z$ \\
$16$ & $\Z/3\Z \times \Z/3\Z$ 
     \end{tabular}
\end{center}
}
\noindent
For the groups $\{e\}$, $\Z/3\Z$, $\Z/2\Z \times \Z/2\Z$, $\Z/2\Z \times \Z/6\Z$ and $\Z/3\Z \times \Z/3\Z$, we have 
$B \in \Q^{\times}$ and thus $E_B$ arises from an elliptic curve defined over $\Q$ via base extension.  Again Gonz\'alez-Jim\'enez has shown \cite[Thm. 1]{GJ19} that for no $B \in \Q^{\times}$ do we 
have $E_B(K)[\tors] \cong \Z/7\Z$.   And again Najman \cite{Najman10}, \cite{Najman11} has classified the torsion subgroups of 
all elliptic curves (CM or otherwise) defined over $K$.  
\end{example}

\begin{remark}
a) Case I. of the above calculation is a more detailed and explicit version of one of the main results of \cite{Parish89}.  Parish offers addenda on Cases II. and III., but without proof, and the possibilities $E(K(\ff))[\tors] \cong \Z/10\Z$ 
in Case II. and $E(K(\ff))[\tors] \cong \Z/7\Z$ and $E(K(\ff))[\tors] \cong \Z/3\Z \times \Z/3\Z$ in Case III are not mentioned. \\
b) In Cases II. and III. a classification of the possibilities for $E(K(\ff))[\tors]$ \emph{apart} from the ``Olson groups'' 
$\{e\}$, $\Z/2\Z$, $\Z/3\Z$, $\Z/4\Z$, $\Z/6\Z$, $\Z/2\Z \times \Z/2\Z$ was done in \cite[Thm. 1.4]{BCS} using computer calculations on 
degrees of preimages of $j = 0$ and $j = 1728$ on modular curves \cite[Table 2]{BCS}.  This result was used to find 
$E_A$ with $E_A(\Q(\sqrt{-1})[\tors] \cong \Z/10\Z$ in Example \ref{EXAMPLE7.9} and $E_B$ with $E_B(\Q(\sqrt{-3})[\tors] \cong \Z/7\Z$ 
in Example \ref{EXAMPLE7.10}.
\end{remark}

\subsection{Isogenies over $K(j)$: Part II}
We return to complete the classification of $K(j)$-rational cyclic isogenies for elliptic curves with CM by the orders of discriminants $\Delta = -4$ and $\Delta = -3$.  Recall that these cases have additional complexity 
coming from the fact that $\mu_K$ acts nontrivially on the projectivized torsion group $\mathbb{P} E[N]$.  In this case, 
there is an $\OO$-CM elliptic curve $(E_0)_{/K}$ for which the projective mod $N$ Galois representation
\[\mathbb{P} \rho_N \colon \gg_K \ra C_N(\OO)/(\Z/N\Z)^{\times}\] 
is surjective.  As we vary over the $K$-models of $E_0$, the representation $\mathbb{P} \rho_N$ twists 
by a character 
\[ \PP \chi \colon \gg_K \ra \mu_K/\{\pm 1\}. \]
Thus the index of $\mathbb{P} \rho_N(\gg_K)$ in $C_N(\OO)/(\Z/N\Z)^{\times}$ divides $2$ when $w_K = 4$ and divides 
$3$ when $w_K = 6$.
\\ \indent
We will rule out the existence of $K$-rational cyclic $N$-isogenies for various values of $N$ using the following ``$\widetilde{T}$-argument'': 
suppose that $\widetilde{T}(\OO,N) > \varphi(N) \frac{w_K}{2}$.  Then every $C_N(\OO)$-orbit on a point of order $N$ in $\OO/N\OO$ 
has size a multiple of $\widetilde{T}(\OO,N)$, so every $C_N(\OO)/(\Z/N\Z)^{\times}$-orbit on $\mathbb{P} E[N]$ has size a multiple of 
$\frac{\widetilde{T}(\OO,N)}{\varphi(N)}$, which by our hypothesis is greater than $\frac{w_K}{2}$.  So after passing to a field extension $L$ of degree $\frac{w_K}{2}$ to trivialize $\PP \chi$, we find that $\gg_L$ acts without fixed points on $\mathbb{P} E[N]$, and there 
is no $L$-rational cyclic $N$-isogeny and thus certainly no $K$-rational cyclic $N$-isogeny.   
\\ \\
Let $\OO$ be the order of discriminant $\Delta = -4$, so $K(j) = K = \Q(\sqrt{-1})$ and $w_K = 4$.  \\
$\bullet$ If $\ell \equiv 1 \pmod{4}$, then for all $a \in \Z^+$ we have that $-4$ is a square in $\Z/4\ell^a\Z$ so there is a $K$-rational cyclic $\ell^a$-isogeny.  In fact we get that \emph{every} $\OO$-CM elliptic curve $E_{/K}$ has a $K$-rational 
cyclic $\ell^a$-isogeny.   \\
$\bullet$ If $\ell \equiv 3 \pmod{4}$, since $\frac{\widetilde{T}(\OO,\ell)}{\varphi(\ell)\frac{w_K}{2}} =  \frac{\ell^2-1}{2 (\ell-1)} = \frac{\ell+1}{2} > 1$, by the $\widetilde{T}$-argument there is no $K$-rational $\ell$-isogeny.\\
$\bullet$ If $\ell = 2$, then since $T(\OO,4) = 1$, we can have a $K$-rational point of order $4$ (as already seen in $\S$7.5), hence 
a cyclic $K$-rational $4$-isogeny.  Since $\frac{\widetilde{T}(\OO,8)}{\varphi(8)\frac{w_K}{2}} = \frac{16}{4 \cdot 2} > 1$,
by the $\widetilde{T}$-argument there is no cyclic $K$-rational $8$-isogeny. \\
Any elliptic curve over a number field admitting a rational cyclic $N$-isogeny also admits a rational cyclic $M$-isogeny for all 
$M \mid N$.  Moreover, if an elliptic curve $E_{/F}$ admits $F$-rational cyclic $N_1,\ldots,N_r$ isogenies for pairwise coprime 
$N_1,\ldots,N_r$, then the subgroup generated by the kernels of these isogenies is $F$-rational and 
cyclic of order $N_1 \cdots N_r$ so $E$ admits an $F$-rational cyclic $N_1 \cdots N_r$-isogeny.  The assertion of 
Theorem \ref{KJISOGTHM}b) now follows.
\\ \\
Let $\OO$ be the order of discriminant $\Delta = -3$, so $K(j) = K = \Q(\sqrt{-3})$ and $w_K = 6$.  \\
$\bullet$ If $\ell \equiv 1 \pmod{3}$, then similarly to the $\Delta = -4$ case above we get that every $\OO$-CM elliptic curve $E_{/K}$ has a
$K$-rational cyclic $\ell^a$-isogeny for all $a \in \Z^+$. \\
$\bullet$ If $\ell \equiv 2 \pmod{3}$ and $\ell > 2$, then since $\frac{\widetilde{T}(\OO,\ell)}{\varphi(\ell) \frac{w_K}{2}} = \frac{\ell^2-1}{3(\ell-1)} = \frac{\ell+1}{3} > 1$,
by the $\widetilde{T}$-argument there is no cyclic $K$-rational $\ell$-isogeny.  \\
$\bullet$ If $\ell = 2$, then since $T(\OO,2) = 1$ there is an $\OO$-CM elliptic curve $E_{/K}$ with a $K$-rational $2$-isogeny.  \\
$\bullet$.  Since $\frac{\widetilde{T}(\OO,4)}{\varphi(4) \frac{w_K}{2}} = \frac{12}{2 \cdot 3} > 1$,
by the $\widetilde{T}$-argument there is no cyclic $K$-rational $4$-isogeny.  \\
$\bullet$ We claim that there is an $\OO$-CM elliptic curve $E_{/K}$ with a $K$-rational cyclic $9$-isogeny.  Let $\pp$ be 
the unique prime ideal of $\OO$ lying over $3$, and let $P$ be a generator of the cyclic $\OO$-module $E[\pp^3] \subset E[9]$, 
so $P$ has order $9$.  By Lemma \ref{7.1}, the $C_9(\OO)$-orbit on $P$ can be identified with the unit group $(\OO/\pp^3)^{\times}$, 
of order $18$.  The $\OO$-module generated by $P$ is also isomorphic to $(\zeta_3-1)\OO/9\OO$, and using this representation 
it is easy to compute that the group $(\OO/\pp^3)^{\times}$ is generated by the images of the scalar matrices $(\Z/9\Z)^{\times}$ 
and the cube roots of unity.  Thus Galois acts on the image of $P$ in $\mathbb{P} E[9]$ via a character $\mathbb{P} \chi$.  After twisting by the inverse of this character, the image of $P$ in $\mathbb{P} E[9]$ becomes fixed by Galois and we get a $K$-rational cyclic $9$-isogeny. \\
$\bullet$ Since $\frac{\widetilde{T}(\OO,18)}{\varphi(18) \frac{w_K}{2}} = \frac{54}{3 \cdot 6} > 1$, by the $\widetilde{T}$-argument 
there is no $K$-rational cyclic $18$-isogeny. \\
$\bullet$ Since $\frac{\widetilde{T}(\OO,27)}{\varphi(27) \frac{w_K}{2}} = \frac{162}{3 \cdot 18} > 1$,
by the $\widetilde{T}$-argument there is no $K$-rational cyclic $27$-isogeny.  \\
\\ 
$\bullet$ From $\S$7.5 (or Theorem \ref{BIGONE}) we know there is an $\OO$-CM elliptic curve $E_{/K}$ with a rational point of 
order $6$, hence certainly a cyclic $K$-rational $6$-isogeny.  \\
Using the same considerations as in the $\Delta = -4$ case above we get the assertion of Theorem \ref{KJISOGTHM}c).

\begin{example}
There are $13$ imaginary quadratic discriminants $\Delta$ such that the corresponding order $\OO(\Delta)$ has class number one.  For each such $\Delta$ we list the set of $N > 1$ for which there is an $\OO(\Delta)$-CM elliptic curve $E$ defined over $K = \Q(\sqrt{\Delta})$ that admits a $K$-rational cyclic $N$-isogeny -- otherwise put, for which there is an $\OO(\Delta)$-CM point on $X_0(N)(\Q(\sqrt{\Delta}))$.  
{\footnotesize
\begin{center}
    \begin{tabular}{ c|c}
$\Delta$ & Values of $N > 1$ with an $\OO(\Delta)$-CM point on $X_0(N)(\Q(\sqrt{\Delta}))$ \\ \hline
$-3$ & $2^a 3^b \ell_1^{a_1} \cdots \ell_r^{a_r}$ with $(a,b) \in \{(0,0), \ (1,0), \ (2,0), \ (0,1), \ (1,1)\}$, $\ell_i \equiv 1 \pmod{3}$ \\
$-4$ & $2^a \ell_1^{a_1} \cdots \ell_r^{a_r}$ with $a \leq 2$, $\ell_i \equiv 1 \pmod{4}$ \\ 
$-7$ &  $7^a \ell_1^{a_1} \cdots \ell_r^{a_r}$ with $a \leq 1$, $\left( \frac{\ell_i}{7} \right) = 1$ \\
$-8$ &  $2^a \ell_1^{a_1} \cdots \ell_r^{a_r}$ with $a \leq 1$, $\ell_i \equiv 1,3 \pmod{8}$ \\
$-11$ &  $11^a \ell_1^{a_1} \cdots \ell_r^{a_r}$ with $a \leq 1$, $\left( \frac{\ell_i}{11} \right) = 1$ \\
$-12$ &  $2^a 3^b \ell_1^{a_1} \cdots \ell_r^{a_r}$ with $a \leq 2$, $b \leq 1$, $\ell_i \equiv 1 \pmod{3}$ \\
$-16$ & $2^a \ell_1^{a_1} \cdots \ell_r^{a_r}$ with $a \leq 3$, $\ell_i \equiv 1 \pmod{4}$ \\
$-19$ & $19^a \ell_1^{a_1} \cdots \ell_r^{a_r}$ with $a \leq 1$, $\left( \frac{\ell_i}{19} \right) = 1$ \\
$-27$ & $2^a 3^b \ell_1^{a_1} \cdots \ell_r^{a_r}$ with $a \leq 2$, $b \leq 3$, $\ell_i \equiv 1 \pmod{3}$ \\
$-28$ &  $7^a \ell_1^{a_1} \cdots \ell_r^{a_r}$ with $a \leq 1$, $\left( \frac{\ell_i}{7} \right) = 1$ \\
$-43$ &  $43^a \ell_1^{a_1} \cdots \ell_r^{a_r}$ with $a \leq 1$, $\left( \frac{\ell_i}{43} \right) = 1$ \\
$-67$ & $67^a \ell_1^{a_1} \cdots \ell_r^{a_r}$ with $a \leq 1$, $\left( \frac{\ell_i}{67} \right) = 1$ \\
$-163$ & $163^a \ell_1^{a_1} \cdots \ell_r^{a_r}$ with $a \leq 1$, $\left( \frac{\ell_i}{163} \right) = 1$ 
     \end{tabular}
\end{center}
}
\end{example}

\end{document}